
\documentclass[10pt,reqno]{amsart}
\pdfoutput=1


\usepackage{adjustbox,array,bigints,cancel,color,comment,extpfeil,mathdots,mathrsfs,mathtools,MnSymbol,multicol,scalerel,setspace,tcolorbox,tikz,tikz-cd,upgreek}
\usepackage[fontsize = 10pt]{fontsize}
\usepackage[left = 2.5cm, right = 2.5cm, top = 2.5cm, bottom = 2.5cm]{geometry}
\usetikzlibrary{patterns}


\numberwithin{equation}{section}

\theoremstyle{plain}
\newtheorem{Cl}[equation]{Claim}

\newtheorem{Lem}[equation]{Lemma}

\newtheorem{Prop}[equation]{Proposition}

\newtheorem{Thm}[equation]{Theorem}

\theoremstyle{definition}

\newtheorem{Conds}[equation]{Conditions}

\newtheorem{Defn}[equation]{Definition}
\newtheorem{Ex}[equation]{Example}

\theoremstyle{remark}
\newtheorem{Rk}[equation]{Remark}

\theoremstyle{plain}
\newtheorem*{Cl*}{Claim}
\newtheorem*{Conj*}{Conjecture}
\newtheorem*{Lem*}{Lemma}
\newtheorem*{Prop*}{Proposition}
\newtheorem*{Q*}{Question}
\newtheorem*{Schol*}{Scholium}
\newtheorem*{SubCl*}{Subclaim}
\newtheorem*{Thm*}{Theorem}

\theoremstyle{definition}
\newtheorem*{Cond*}{Condition}
\newtheorem*{Cstr*}{Construction}
\newtheorem*{Defn*}{Definition}
\newtheorem*{Ex*}{Example}
\newtheorem*{Exs*}{Examples}
\newtheorem*{Md*}{Method}
\newtheorem*{Nt*}{Notation}
\newtheorem*{Pty*}{Property}

\theoremstyle{remark}

\newtheorem*{Rk*}{Remark}
\newtheorem*{Rks*}{Remarks}
\newtheorem*{A-d}{Aside}

\newcommand{\cag}{\begin{equation}\begin{gathered}}
\newcommand{\caag}{\end{gathered}\end{equation}}
\newcommand{\caw}{\begin{equation*}\begin{gathered}}
\newcommand{\caaw}{\end{gathered}\end{equation*}}
\newcommand{\e}{\begin{equation}\begin{aligned}}
\newcommand{\ee}{\end{aligned}\end{equation}}
\newcommand{\ew}{\begin{equation*}\begin{aligned}}
\newcommand{\eew}{\end{aligned}\end{equation*}}

\newcommand{\bcd}{\begin{tikzcd}}
\newcommand{\ecd}{\end{tikzcd}}
\newcommand{\bma}{\begin{matrix}}
\newcommand{\ema}{\end{matrix}}
\newcommand{\bpm}{\begin{pmatrix}}
\newcommand{\epm}{\end{pmatrix}}
\newcommand{\bvm}{\begin{vmatrix}}
\newcommand{\evm}{\end{vmatrix}}

\newcommand{\nts}{\begin{tcolorbox}}
\newcommand{\ntss}{\end{tcolorbox}}


\newcommand{\clref}[1]{Claim \ref{#1}}

\newcommand{\condsref}[1]{Conditions \ref{#1}}
\newcommand{\cref}[1]{Corollary \ref{#1}}

\newcommand{\eref}[1]{eqn.\hspace{0.6mm}(\ref{#1})}
\newcommand{\erefs}[1]{eqns.\hspace{0.6mm}(\ref{#1})}
\newcommand{\exref}[1]{Example \ref{#1}}

\newcommand{\lref}[1]{Lemma \ref{#1}}

\newcommand{\pref}[1]{Proposition \ref{#1}}

\newcommand{\rref}[1]{Remark \ref{#1}}

\newcommand{\sref}[1]{\S\ref{#1}}

\newcommand{\tref}[1]{Theorem \ref{#1}}
\newcommand{\trefs}[1]{Theorems \ref{#1}}


\DeclareMathSizes{10}{10}{8}{7}

\newcommand{\mss}[1]{\mbox{\scriptsize \(#1\)}}

\newcommand{\mns}[1]{\mbox{\normalsize \(#1\)}}
\newcommand{\mla}[1]{\mbox{\large \(#1\)}}

\newcommand{\bb}[1]{\mathbb{#1}}
\newcommand{\cal}[1]{\mathscr{#1}}
\newcommand{\fr}[1]{\mathfrak{#1}}

\newcommand{\mc}[1]{\mathcal{#1}}


\newcommand{\ul}[1]{\underline{#1}}


\newcommand{\as}{\hspace{5mm}\text{as}\hspace{5mm}}
\newcommand{\et}{\hspace{5mm}\text{and}\hspace{5mm}}
\newcommand{\hs}[1]{\hspace{#1}}
\newcommand{\on}{\hspace{5mm}\text{on}\hspace{5mm}}
\newcommand{\vs}[1]{\vspace{#1}}


\DeclareMathSymbol{\Alpha}{\mathalpha}{operators}{"41}
\DeclareMathSymbol{\Beta}{\mathalpha}{operators}{"42}
\DeclareMathSymbol{\Epsilon}{\mathalpha}{operators}{"45}
\DeclareMathSymbol{\Zeta}{\mathalpha}{operators}{"5A}
\DeclareMathSymbol{\Eta}{\mathalpha}{operators}{"48}
\DeclareMathSymbol{\Iota}{\mathalpha}{operators}{"49}
\DeclareMathSymbol{\Kappa}{\mathalpha}{operators}{"4B}
\DeclareMathSymbol{\Mu}{\mathalpha}{operators}{"4D}
\DeclareMathSymbol{\Nu}{\mathalpha}{operators}{"4E}
\DeclareMathSymbol{\Omicron}{\mathalpha}{operators}{"4F}
\DeclareMathSymbol{\Rho}{\mathalpha}{operators}{"50}
\DeclareMathSymbol{\Tau}{\mathalpha}{operators}{"54}
\DeclareMathSymbol{\Chi}{\mathalpha}{operators}{"58}
\DeclareMathSymbol{\omicron}{\mathord}{letters}{"6F}

\newcommand{\al}{\alpha}
\newcommand{\be}{\beta}
\newcommand{\ga}{\gamma}
\newcommand{\de}{\delta}
\newcommand{\ep}{\varepsilon}

\newcommand{\ze}{\zeta}
\renewcommand{\th}{\theta}
\newcommand{\vth}{\vartheta}
\newcommand{\io}{\iota}
\newcommand{\ka}{\kappa}
\newcommand{\la}{\lambda}
\newcommand{\vpi}{\varpi}
\newcommand{\rh}{\rho}

\newcommand{\si}{\sigma}

\newcommand{\ta}{\tau}
\newcommand{\up}{\upsilon}
\newcommand{\ph}{\phi}
\newcommand{\vph}{\upvarphi}

\newcommand{\ch}{\chi}

\newcommand{\om}{\omega}


\newcommand{\Ga}{\Gamma}
\newcommand{\De}{\Delta}

\newcommand{\Th}{\Theta}

\newcommand{\La}{\Lambda}

\newcommand{\Si}{\Sigma}

\newcommand{\Ph}{\Phi}

\newcommand{\Om}{\Omega}



\newcommand{\<}{\langle}
\newcommand{\?}{\rangle}

\newcommand{\codim}{\operatorname{codim}}

\newcommand{\ds}{\oplus}
\newcommand{\End}{\operatorname{End}}

\newcommand{\Id}{\operatorname{Id}}

\newcommand{\Ker}{\operatorname{Ker}}

\newcommand{\lqt}[2]{\left.\raisebox{-1mm}{\(#2\)}\middle\backslash\raisebox{1mm}{\(#1\)}\right.}

\newcommand{\rqt}[2]{\left.\raisebox{1mm}{\(#1\)}\middle/\raisebox{-1mm}{\(#2\)}\right.}
\newcommand{\ts}{\otimes}

\newcommand{\x}{\times}

\newcommand{\del}{\partial}

\newcommand{\Hocl}[1]{\overset{\circ}{H^{#1}}_{\kern-1.9mm\cl}}

\newcommand{\lop}{\left\|\kern-1.30mm\left\|}
\newcommand{\op}{\|\kern-1.30mm\|}
\newcommand{\rop}{\right\|\kern-1.30mm\right\|}
\newcommand{\SI}{\operatorname{\cal{I}\kern-1.5pt nd}}


\newcommand{\cc}{\subseteq}

\newcommand{\es}{\emptyset}

\newcommand{\mt}{\mapsto}

\newcommand{\osr}{\backslash}
\newcommand{\oto}[1]{\xrightarrow{#1}}
\newcommand{\pc}{\subset}

\newcommand{\yy}{\supseteq}

\newcommand{\B}{\mathrm{B}}

\newcommand{\CH}{\mathcal{H}_3}

\newcommand{\cl}{\mathrm{closed}}

\newcommand{\dd}{\mathrm{d}}
\newcommand{\diam}{\operatorname{diam}}

\newcommand{\dR}[1]{H^{#1}_{\operatorname{dR}}}
\newcommand{\E}{\mathrm{Eucl}}

\newcommand{\emb}{\hookrightarrow}

\newcommand{\hk}{\righthalfcup}
\newcommand{\Hol}{\operatorname{Hol}}
\newcommand{\Hs}{\raisebox{1pt}{\mss{\bigstar}}}
\newcommand{\itr}[1]{\overset{\circ}{#1}}

\newcommand{\M}{\mathrm{M}}

\newcommand{\OP}[2]{\mathcal{O}_{\mathbb{CP}^{#1}}(#2)}

\newcommand{\s}{\odot}

\renewcommand{\ss}[2][{}]{\bigodot{\hspace{-1mm}}^{#2}_{#1}\hspace{0.6mm}}

\newcommand{\T}{\mathrm{T}}
\newcommand{\tl}{{\mns{\sim}}}

\newcommand{\w}{\wedge}
\newcommand{\ww}[2][{}]{\bigwedge{\hspace{-1mm}}^{#2}_{\hspace{1mm}#1}\hspace{0.1mm}}

\newcommand{\0}{\infty}
\newcommand{\1}{\cdot}

\newcommand{\ca}[1]{\breve{#1}}
\renewcommand{\ge}{\geqslant}
\newcommand{\gl}{\hspace{0.4mm}\raisebox{0.8mm}{\(>\)}\kern-1.8mm\raisebox{-0.8mm}{\(<\)}\hspace{0.4mm}}
\newcommand{\gle}{\hspace{0.4mm}\raisebox{1.2mm}{\(\ge\)}\kern-1.8mm\raisebox{-1.2mm}{\(\le\)}\hspace{0.4mm}}
\renewcommand{\le}{\leqslant}
\newcommand{\pt}{\bullet}


\newcommand{\g}{\(\mathrm{G}_2\)}
\newcommand{\Gg}{\mathrm{G}}
\newcommand{\GL}{\operatorname{GL}}

\newcommand{\SL}{\operatorname{SL}}

\newcommand{\SO}{\operatorname{SO}}
\newcommand{\Sp}{\operatorname{Sp}}
\newcommand{\Spin}{\operatorname{Spin}}
\newcommand{\Stab}{\operatorname{Stab}}
\newcommand{\SU}{\operatorname{SU}}



\newcommand{\rmm}{Riemannian metric}

\newcommand{\sqfs}{stratified quasi-Finslerian structure}
\newcommand{\srm}{stratified Riemannian metric}
\newcommand{\srsm}{stratified Riemannian semi-metric}

\newcommand{\Wlg}{Without loss of generality}
\newcommand{\wlg}{without loss of generality}
\newcommand{\Wrt}{With respect to}
\newcommand{\wrt}{with respect to}


\newcommand{\EH}{Eguchi--Hanson}
\newcommand{\GH}{Gromov--Hausdorff}


\newcommand{\ol}[1]{\overline{#1}}
\newcommand{\h}{\widehat}
\newcommand{\lt}{\left}
\newcommand{\m}{\middle}

\newcommand{\rt}{\right}

\newcommand{\tld}{\widetilde}

\title[Unboundedness above of the Hitchin functional on \g\ 3-forms and collapsing results]{Unboundedness above of the Hitchin functional on \g\ 3-forms and associated collapsing results}
\author{Laurence H. Mayther}

\begin{document}\fontsize{10pt}{12pt}\selectfont
\begin{abstract}
\footnotesize{This paper uses scaling arguments to prove the unboundedness above of the Hitchin functional on closed \g\ 3-forms for two explicit closed 7-manifolds.  The first manifold is the product \(X \x S^1\) (where \(X\) is the Nakamura manifold constructed by de Bartolomeis--Tomassini) equipped with a 4-dimensional family of closed \g\ 3-forms and is inspired by a short paper of Fern\'{a}ndez.  The second is the manifold recently constructed by Fern\'{a}ndez--Fino--Kovalev--Muñoz.  In the latter example, careful resolution of singularities is required, in order to ensure that the rescaled forms are cohomologically constant.  By combining suitable geometric estimates with a general collapsing theorem for orbifolds recently obtained by the author, explicit descriptions of the large volume limits of both manifolds are also obtained.  The proofs in this paper are notable for not requiring explicit solution of the Laplacian flow evolution PDE for closed \g-structures, thereby allowing treatment of manifolds which lack the high degree of symmetry generally required for Laplacian flow to be explicitly soluble.}
\end{abstract}
\maketitle

\section{Introduction}

Let \(\bb{A}\) be an oriented, 7-dimensional, real vector space.  A 3-form \(\vph \in \ww{3}\bb{A}^*\) is termed a \g\ 3-form (or, simply, of \g-type) if there exists a correctly oriented basis \(\lt(\th^1,...,\th^7\rt)\) of \(\bb{A}^*\) such that:
\ew
\vph = \th^{123} + \th^{145} + \th^{167} + \th^{246} - \th^{257} - \th^{347} - \th^{356}.
\eew
In this case \(\Stab_{\GL_+(\bb{A})}(\vph) \cong \Gg_2\).  The set \(\ww[+]{3}\bb{A}^*\) of \g\ 3-forms on \(\bb{A}\) is open in \(\ww{3}\bb{A}^*\) and thus \g\ 3-forms are stable, in the sense of \cite{SF&SM}.  Now let \(\M\) be an oriented 7-manifold.  A 3-form \(\ph \in \Om^3(\M)\) is termed a \g\ 3-form if, for all \(x \in \M\), \(\ph|_x \in \ww[+]{3}\T^*_x\M\).  Such \(\ph\) is equivalent to a \g-structure on \(\M\) and thus, since \(\Gg_2 \pc \SO(7)\), \(\ph\) induces a metric \(g_\ph\) and volume form \(vol_\ph\) on \(\M\).

Now suppose that \(\M\) is closed and \(\dd\ph = 0\), and write \([\ph]_+\) for the subset of the de Rham class of \(\ph\) consisting of \g\ 3-forms; note that \([\ph]_+ \pc [\ph]\) is open in the \(C^0\)-topology, by the stability of \g\ 3-forms.  The Hitchin functional on \([\ph]_+\), defined in \cite{TGo3Fi6&7D}, is the map:
\ew
\bcd[row sep = 0pt]
\CH: [\ph]_+ \ar[r] & (0,\0)\\
\ph' \ar[r, maps to] & \bigint_\M vol_{\ph'}
\ecd
\eew
Interest in the functional \(\CH\) stems primarily from the observation that its critical points are precisely those \(\ph' \in [\ph]_+\) which are torsion-free, i.e.\ which satisfy \(\dd \Hs_{\ph'}\ph' = 0\), a condition which is equivalent to \(\Hol(g_{\ph'}) \cc \Gg_2\) by a well-known result of Fern\'{a}ndez--Gray \cite{RMwSGG2}.  In \cite{RoG2S}, Bryant used this observation to propose a new possible construction of metrics with holonomy contained in \g\ on closed 7-manifolds, via the gradient flow of the functional \(\CH\) (known as Laplacian flow).  Specifically, if a flow line existed for all time, and the functional \(\CH\) were bounded above, then one might hope that, as \(t \to \0\), the flow line would converge to a torsion-free \g\ 3-form and hence produce a metric with holonomy contained in \g.  This led Bryant to pose the general question of whether or not the functional \(\CH\) is bounded above.

At present, answering this question in general appears intractable, however for manifolds which cannot admit torsion-free \g\ 3-forms, the question can be more tractable (see, e.g.\ \cite[\S6, Ex.\ 2]{RoG2S}).  This paper considers two, specific examples of such manifolds and proves that, in each case, the functional \(\CH\) is unbounded above.

The first example is inspired by Fern\'{a}ndez' short paper \cite{AFoCSG2CM} and consists of the closed 7-manifold \(N = X \x S^1\), where \(X\) is the Nakamura manifold constructed by de Bartolomeis--Tomassini \cite{OSGCYM}, equipped with a 4-dimensional family of closed \g\ 3-forms \(\ph(\al,\be,\la)\) on \(N\) described in \sref{N-mfld}, where \((\al,\be,\la) \in \lt(\bb{R} \osr \{0\}\rt)^2 \x \lt(\bb{C}\osr\{0\}\rt)\).

\begin{Thm}\label{UA}
The map:
\ew
\bcd[row sep = 0pt]
(\bb{R} \osr \{0\})^2 \x (\bb{C}\osr\{0\}) \ar[r] & \dR{3}(N)\\
(\al,\be,\la) \ar[r, maps to] & \lt[\ph(\al,\be,\la)\rt]
\ecd
\eew
is injective and, for all \((\al,\be, \la) \in (\bb{R} \osr \{0\})^2 \x (\bb{C}\osr\{0\})\), the functional:
\ew
\CH:\lt[\ph(\al,\be,\la)\rt]_+ \to (0,\infty)
\eew
is unbounded above.
\end{Thm}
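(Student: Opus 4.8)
The plan is to run the whole argument in the explicit left--invariant coframe $(e^1,\dots,e^7)$ of $N = X \x S^1$ fixed in \sref{N-mfld}, with respect to which the (left--invariant) forms $\ph(\al,\be,\la)$ have their defining expressions and $\dd$ is controlled by the structure equations of the solvable Lie algebra $\fr g$ of $X$ together with $\dd e^7 = 0$. In this coframe $e^1, e^2, e^7$ are closed, while the four remaining ``fibre'' elements $e^3, e^4, e^5, e^6$ are twisted only by the closed forms $e^1, e^2$ pulled back from the base $2$--torus of the Nakamura fibration. Granting this, injectivity is a finite linear--algebra computation in $\sch{3}{N}$, while the unboundedness of $\CH$ is extracted from a family of \emph{hidden symmetries} of $\fr g$ rescaling the volume form while fixing the de Rham class.

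\textbf{Injectivity.} For injectivity I would expand $\ph(\al,\be,\la)$ in the coframe; its class in $\dR{3}(N)$ is then a fixed $\bb R$--linear function of $(\al,\be,\la)$, taking values among the classes of the finitely many closed left--invariant $3$--forms that occur. Using explicit generators for $\sch{3}{N}$ --- read off from the known cohomology of the Nakamura solvmanifold via the K\"unneth formula --- one checks that the four real parameters enter through four classes that remain linearly independent in $\sch{3}{N}$ after exact terms are discarded, whence injectivity of $(\al,\be,\la)\mt[\ph(\al,\be,\la)]$.

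\textbf{Unboundedness of $\CH$.} Fix $(\al,\be,\la)$ and abbreviate $\ph = \ph(\al,\be,\la)$. The key point is that the structure equations of $\fr g$ are homogeneous for the bigrading of $\W^\pt\fr g^*$ in which $e^1, e^2, e^7$ have weight $(0,0)$ and $\langle e^3, e^4\rangle$, $\langle e^5, e^6\rangle$ carry weights $(1,0)$ and $(0,1)$; equivalently, for each $a, b > 0$ the diagonal map $A_{a,b}\in\GL_+(7)$ acting as the identity on $\langle e^1, e^2, e^7\rangle$, as multiplication by $a$ on $\langle e^3, e^4\rangle$ and as multiplication by $b$ on $\langle e^5, e^6\rangle$ is an automorphism of $\fr g\ds\bb R$. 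Three consequences drive the proof. First, $A_{a,b}^*\ph$ is again a \g\ $3$--form, since $A_{a,b}\in\GL_+(7)$ preserves the open orbit $\ww[+]{3}\T^*_x N$ pointwise, and it is closed, since $A_{a,b}$ induces a cochain automorphism of the Chevalley--Eilenberg complex $\lt(\W^\pt(\fr g\ds\bb R)^*,\dd\rt)$. Second, a \g\ $3$--form determines its metric $\GL_+(7)$--equivariantly, so $g_{A_{a,b}^*\ph} = A_{a,b}^*g_\ph$ and hence $vol_{A_{a,b}^*\ph} = |\det A_{a,b}|\,vol_\ph = a^2b^2\,vol_\ph$, giving $\CH(A_{a,b}^*\ph) = a^2b^2\,\CH(\ph)$; this does not contradict the invariance of $\int_N vol$ under diffeomorphisms, because $A_{a,b}$ is a fibrewise--linear bundle map, not a diffeomorphism of $N$. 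Third, writing $\ph = \sum_{(p,q)}\ph_{(p,q)}$ for the weight decomposition, each $\ph_{(p,q)}$ is closed (as $\dd$ preserves the bigrading) and
\[
A_{a,b}^*\ph - \ph = \sum_{(p,q)\neq(0,0)}\lt(a^pb^q - 1\rt)\ph_{(p,q)},
\]
so $A_{a,b}^*\ph\in[\ph]$ as soon as every $\ph_{(p,q)}$ with $(p,q)\neq(0,0)$ is exact on $N$. Granting this, letting $a, b\to\0$ produces a family of \g\ $3$--forms in $[\ph]_+$ along which $\CH\to\0$; hence $\CH$ is unbounded above, and since $(\al,\be,\la)$ was arbitrary, \tref{UA} follows.

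\textbf{The main obstacle.} The only non--formal step is the exactness on $N$ of the nonzero--weight components $\ph_{(p,q)}$. This is a genuine property of the particular family $\ph(\al,\be,\la)$ built in \sref{N-mfld}: it fails for the ``standard'' \g\ $3$--form written in this coframe (which, in fact, is not even closed here), and establishing it amounts to exhibiting an explicit invariant $2$--form primitive for each $\ph_{(p,q)}$, $(p,q)\neq(0,0)$, from the structure equations --- equivalently, to checking that the nonzero--weight part of $\sch{3}{N}$ receives no contribution from $\ph$. I expect this bookkeeping, and the way it constrains (and thereby explains) the precise linear combination defining $\ph(\al,\be,\la)$, to carry the real content of the theorem. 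The further geometric estimates one then wants --- uniform control of the rescaled metrics $g_{A_{a,b}^*\ph}$ as $a, b\to\0$, to be fed into the author's orbifold collapsing theorem --- are needed for the large volume limit, not for \tref{UA} itself, for which the three consequences above suffice.
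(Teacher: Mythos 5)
Your unboundedness argument has a genuine gap, and it sits exactly at the step you defer as ``bookkeeping''. Decompose $\ph=\ph(\al,\be,\la)$ according to your bigrading: the weight-$(2,0)$ and $(0,2)$ pieces are the two halves of $\al\,g^1\w\om$, and these are indeed exact (from $\dd\Th^2=\Th^1\w\Th^2$, $\dd\Th^3=-\Th^1\w\Th^3$ one gets $\dd\lt(\Th^2\w\ol{\Th}^2\rt)=2g^1\w\Th^2\w\ol{\Th}^2$ and $\dd\lt(\Th^3\w\ol{\Th}^3\rt)=-2g^1\w\Th^3\w\ol{\Th}^3$). But the weight-$(1,1)$ piece, $-\be\,g^2\w\fr{Re}(\la\Om)+g^3\w\fr{Im}(\la\Om)$, is \emph{not} exact: its integrals against the closed $4$-forms $g^{12}\w\fr{Re}\Om$, $g^{12}\w\fr{Im}\Om$, $g^{13}\w\fr{Re}\Om$, $g^{13}\w\fr{Im}\Om$ are nonzero multiples of $\fr{Im}\la$, $\fr{Re}\la$, $\be\fr{Re}\la$, $\be\fr{Im}\la$, and these cannot all vanish since $\be\ne0$, $\la\ne0$. (These pairings are precisely what the paper uses for the injectivity half; if every nonzero-weight component were exact you would have $[\ph(\al,\be,\la)]=\al\be\lt[g^{123}\rt]$ and injectivity would fail, so the two halves of your proposal are in direct tension.) Consequently $A_{a,b}^*\ph\in[\ph]$ forces $ab=1$, and then $\det A_{a,b}=a^2b^2=1$, so $\CH\lt(A_{a,b}^*\ph\rt)=\CH(\ph)$: within your family of Lie-algebra automorphisms there is no volume growth in the fixed class, and the argument cannot be repaired inside that family.

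The paper's route avoids this by not asking the rescaling to come from an automorphism of the coframe: it scales only the single exact summand $g^1\w\om=\tfrac12\dd\rh$ (from $\dd\rh=2g^1\w\om$, \eref{dSU2}), i.e.\ considers $\ph+\la'\,g^1\w\om$, so closedness and constancy of the class are immediate; the non-trivial point, that this is still a \g\ $3$-form with $\CH$ multiplied by $(1+\la')^{2/3}$, is the pointwise linear algebra of \lref{scaling-lem} and \pref{UB-Md-Prop}(2). Separately, your injectivity sketch asserts that $[\ph(\al,\be,\la)]$ is an $\bb{R}$-linear function of $(\al,\be,\la)$; it is not: since $\al\,g^1\w\om$ is exact, the class depends on the products $\al\be$, $\be\,\fr{Re}\la$, $\be\,\fr{Im}\la$ together with $\fr{Re}\la$, $\fr{Im}\la$, so ``four parameters entering through four independent classes'' is not the right picture. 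The paper instead pairs with the five closed $4$-forms above and recovers $(\al,\be,\la)$ non-linearly from these quantities, which works exactly because $\be\ne0$ and $\la\ne0$.
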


The proof of \tref{UA} uses a direct scaling argument, described in \sref{UB-Md}, to construct explicit families \(\ph(\al,\be,\la;\mu)_{\mu \in [1,\0)}\) of closed \g\ 3-forms in the classes \([\ph(\al,\be,\la)]_+\) with unbounded volume.  The large volume limits of these families are themselves of independent interest.  Indeed, it is known that \(X\) splits as a product \(X \cong X' \x S^1\).  Writing \(\ell = \log \frac{3 + \sqrt{5}}{2}\) for the parameter introduced in \cite{OSGCYM}, there is a natural fibration \(X' \to \rqt{\bb{R}}{\ell\bb{Z}}\) which lifts to define a fibration \(\fr{p}: N \cong X' \x S^1 \x S^1 \to \rqt{\bb{R}}{\ell\bb{Z}}\).

\begin{Thm}\label{CT}
Let \((\al,\be,\la) \in (\bb{R}\osr\{0\})^2 \x (\bb{C}\osr\{0\})\) and let \((N,\ph(\al,\be,\la;\mu))_{\mu \in [1,\infty)}\) be the family constructed in the proof of \tref{UA}.  Then the large volume limit of \((N,\ph(\al,\be,\la;\mu))\) corresponds to an adiabatic limit of the fibration \(\fr{p}\).  Specifically:
\ew
(N,\mu^{-12}\ph(\al,\be,\la;\mu)) \to \lt(\rqt{\bb{R}}{\ell\bb{Z}}, \al^2 \lt(\la\ol{\la}\rt)^{-\frac{2}{3}} g_\E\rt) \as \mu \to \infty,
\eew
where the convergence is in the Gromov--Hausdorff sense.
\end{Thm}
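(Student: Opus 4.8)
The plan is to make the adiabatic nature of the limit explicit, exhibiting $\fr{p}$ as the map along which $N$ collapses. Write $g_\mu := g_{\ph(\al,\be,\la;\mu)}$ for the metric induced by the closed \g\ 3-form constructed in the proof of \tref{UA}; since a \g\ 3-form scales as the cube of length, the metric space $(N,\mu^{-12}\ph(\al,\be,\la;\mu))$ of the statement is $(N,\h g_\mu)$ with $\h g_\mu := \mu^{-8}g_\mu$. First I would express $\h g_\mu$ in the global invariant coframe $(\th^1,\dots,\th^7)$ of \cite{OSGCYM} in which $\ph(\al,\be,\la)$ is presented in \sref{N-mfld}. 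After reindexing so that $\th^1 = \fr{p}^*(\dd x)$ is the pullback of the coordinate on $\rqt{\bb{R}}{\ell\bb{Z}}$ while $\th^2,\dots,\th^7$ span the cotangent directions along the fibres of $\fr{p}$, the aim is to read off from the scaling recipe of \sref{UB-Md} that $\h g_\mu$ is diagonal in this coframe, $\h g_\mu = \sum_{i=1}^{7}c_i(x;\mu)(\th^i)^2$, with $c_1(x;\mu) \to \al^2(\la\ol{\la})^{-2/3}$ uniformly on $N$ and $c_j(x;\mu) \to 0$ uniformly on $N$ for each $j \in \{2,\dots,7\}$; the only $x$-dependence here enters through the bounded hyperbolic factors $e^{\pm x}$ of the solvmanifold coframe, so these convergences are automatically uniform in $x$ as well.

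Granting this, the collapse of the fibres of $\fr{p}$ is immediate: for $b \in \rqt{\bb{R}}{\ell\bb{Z}}$ the fibre $F_b := \fr{p}^{-1}(b)$ is a compact $6$-manifold whose induced metric $\h g_\mu|_{F_b}$ is governed by $c_2,\dots,c_7$, and since $x$ ranges over a compact interval of length $\ell$ one obtains $\diam(F_b,\h g_\mu) \le C\,\bigl(\max_{2 \le j \le 7}\sup_N c_j(\1;\mu)\bigr)^{1/2}$ for a constant $C$ independent of $b$ and of $\mu$; hence $\de_\mu := \sup_b \diam(F_b,\h g_\mu) \to 0$ as $\mu \to \0$. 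Next I would put $h_\mu := c_1(\1;\mu)\,g_\E$, a family of metrics on $\rqt{\bb{R}}{\ell\bb{Z}}$ with $h_\mu \to h_\0 := \al^2(\la\ol{\la})^{-2/3}g_\E$ uniformly, and compare distances. For the upper bound, fix an Ehresmann connection for the fibre bundle $\fr{p}$; given $p,q \in N$, horizontally lift a minimizing $h_\mu$-geodesic from $\fr{p}(p)$ to $\fr{p}(q)$ to a path from $p$ into $F_{\fr{p}(q)}$ and append a path inside that fibre, giving $d_{\h g_\mu}(p,q) \le d_{h_\mu}(\fr{p}(p),\fr{p}(q)) + o(1)$, where the $o(1)$ absorbs $\de_\mu$ together with the contribution of the (uniformly bounded) vertical components of the lift, weighted by the $c_j \to 0$. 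For the lower bound, diagonality of $\h g_\mu$ and the identity $\th^1 = \fr{p}^*(\dd x)$ give $\h g_\mu \ge c_1(x;\mu)\,\fr{p}^*g_\E \ge \bigl(\inf_N c_1(\1;\mu)\bigr)\fr{p}^*g_\E$, so the $\h g_\mu$-length of any path dominates $\bigl(\inf_N c_1(\1;\mu)\bigr)^{1/2}$ times the $g_\E$-length of its $\fr{p}$-image, whence $d_{\h g_\mu}(p,q) \ge d_{h_\mu}(\fr{p}(p),\fr{p}(q)) - o(1)$.

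Since $h_\mu \to h_\0$ uniformly, $d_{h_\mu} \to d_{h_\0}$ uniformly, and the last two bounds yield $\sup_{p,q}\bigl|d_{\h g_\mu}(p,q) - d_{h_\0}(\fr{p}(p),\fr{p}(q))\bigr| \to 0$; combined with $\de_\mu \to 0$ this says exactly that $\fr{p}\colon (N,\h g_\mu) \to \bigl(\rqt{\bb{R}}{\ell\bb{Z}},h_\0\bigr)$ is an $\ep_\mu$-\GH{} approximation with $\ep_\mu \to 0$, i.e.\ $(N,\mu^{-12}\ph(\al,\be,\la;\mu)) \to \bigl(\rqt{\bb{R}}{\ell\bb{Z}},\al^2(\la\ol{\la})^{-2/3}g_\E\bigr)$ in the \GH{} sense, as claimed; positivity of $h_\0$ uses $\al \ne 0$ and $\la \ne 0$. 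Alternatively, once the above geometric estimates are in place the same conclusion is delivered by the author's general collapsing theorem for orbifold fibrations, whose hypotheses — an adapted metric whose fibre diameters tend to zero and whose transverse component converges — have thereby been verified.

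The main obstacle is the first step. One must unwind the scaling recipe of \sref{UB-Md}, keep exact track of which power of $\mu$ multiplies each $\th^i$ in $\ph(\al,\be,\la;\mu)$, and then verify that the induced metric really is diagonal in a $\fr{p}$-adapted coframe; should the \g\ 3-form chosen there generate off-diagonal terms $\th^i\th^j$ via the hyperbolic factors, one would first carry out a mild, $\fr{p}$-preserving change of coframe (retaining $\th^1 = \fr{p}^*(\dd x)$) to diagonalise before extracting the limiting coefficients. The decisive structural fact is that exactly one coframe direction survives the $\mu^{-12}$ rescaling with a positive limit while the other six collapse — this is what pins the limit down to the one-dimensional base $\rqt{\bb{R}}{\ell\bb{Z}}$ rather than a space of intermediate dimension. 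By comparison the uniformity in $b$ of the fibre-diameter bound is routine, being a consequence of compactness of $\rqt{\bb{R}}{\ell\bb{Z}}$ once the coefficients are known to depend on $x$ only through the bounded factors $e^{\pm x}$.
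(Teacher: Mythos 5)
Your proposal is correct, and its computational core coincides with the paper's: rescale ($g_{\mu^{-12}\ph} = \mu^{-8}g_{\ph}$), express the metric in the invariant ($\fr{p}$-adapted) coframe, and observe that exactly the $g^1$-direction survives while the six fibre directions collapse. In the paper this is immediate from \lref{scaling-lem}(2) applied to the forms \(\al g^1, \be g^2, g^3, \mu^6\om, \la\Om\) (so \(\nu = \mu^6(\la\ol{\la})^{-1/2}\)), which gives the closed-form expression
\ew
g_{\mu^{-12}\ph(\al,\be,\la;\mu)} = \al^2\lt(\la\ol{\la}\rt)^{-\frac{2}{3}}\lt(g^1\rt)^{\ts2} + \mu^{-6}\lt(\la\ol{\la}\rt)^{\frac{1}{3}}g_\om + \mu^{-12}\lt(\la\ol{\la}\rt)^{\frac{1}{3}}\lt[\be^2\lt(g^2\rt)^{\ts2} + \lt(g^3\rt)^{\ts2}\rt];
\eew
in particular the metric is already block-diagonal, the coefficient of \((g^1)^{\ts2}\) is \emph{constant} (independent of \(\mu\) and of the base point, since the coframe is invariant), so the diagonalisation contingency and the worry about \(e^{\pm x}\)-dependence that you flag as the ``main obstacle'' simply do not arise, and your function \(c_1(\cdot;\mu)\) is genuinely a metric on the base. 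Where you diverge is the concluding step: the paper at this point just verifies the two hypotheses \(g^\mu \to \fr{p}^*g\) uniformly and \(g^\mu \ge \fr{p}^*g\) and invokes the author's collapsing theorem (\tref{Cvgce-Thm}, a special case of \cite[Thm.\ 1.2]{AGCRfOvSF}), whereas you prove the Gromov--Hausdorff convergence by hand — fibre diameters \(\to 0\), an upper distance bound via horizontal lifts with the vertical contributions absorbed into \(o(1)\), a lower bound from \(g^\mu \ge c_1\,\fr{p}^*g_\E\), and the standard \(\ep\)-approximation criterion. Your route is more self-contained and makes the mechanism of the collapse explicit (and is essentially a proof of the special case of the cited theorem that is needed here), at the cost of redoing estimates the general theorem packages once and for all; you also note the citation route as an alternative, which is exactly the paper's proof. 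No gaps.
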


I remark that \tref{CT} also provides evidence that the functional \(\CH\) can be of significant geometric interest, even on manifolds which do not admit torsion-free \g\ 3-forms.  To the author's knowledge, \tref{CT} is one of the first illustrations of this possibility to have appeared in the literature.

The second example considered in this paper is the manifold \(\ca{\M}\) with closed \g\ 3-form \(\ca{\ph}\) constructed by Fern\'{a}ndez--Fino--Kovalev--Muñoz in \cite{ACG2CMwFBNb1eq1} and described in \sref{FFKM-constr}:
\begin{Thm}\label{FFKM-UA}
The Hitchin functional:
\ew
\CH:\lt[\ca{\ph}\rt]_+ \to (0,\infty)
\eew
is unbounded above.
\end{Thm}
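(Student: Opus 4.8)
The plan is to prove Theorem~\ref{FFKM-UA} by the scaling method of \sref{UB-Md}, exactly as for Theorem~\ref{UA}: I will construct an explicit family \(\ca{\ph}(\mu)_{\mu \in [1,\0)}\) of closed \g\ 3-forms, all lying in the fixed class \([\ca{\ph}]_+ \pc \dR3(\ca{\M})\), and show that \(\int_{\ca{\M}} vol_{\ca{\ph}(\mu)} \2 \0\) as \(\mu \2 \0\); since each \(\ca{\ph}(\mu)\) is then a competitor for \(\CH\) on \([\ca{\ph}]_+\), this forces \(\CH\) to be unbounded above. The family will come from an anisotropic rescaling adapted to the construction of \((\ca{\M},\ca{\ph})\) recalled in \sref{FFKM-constr}. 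Away from the exceptional locus of the resolution \(\pi : \ca{\M} \to \ca{\M}_0\), the form \(\ca{\ph}\) is built from the standard invariant coframe on the underlying (solv/nil)manifold quotient \(\ca{\M}_0\), and this coframe carries a one-parameter grading \(R_\mu\) (a Lie-algebra automorphism, i.e.\ \(\th^i \mapsto \mu^{a_i}\th^i\) for suitable weights \(a_i\)). The weights are to be chosen so that (i) \(R_\mu^*\ca{\ph}\) is again everywhere of \g-type --- which, on the bulk region, follows from the Fano incidence structure of the seven monomials of a \g\ 3-form together with the stability of \(\ww[+]{3}\); (ii) \(R_\mu^*\ca{\ph}\) is again closed; and (iii) \(R_\mu^*\ca{\ph} - \ca{\ph}\) is exact, which holds provided the \(\mu\)-dependent part of \(R_\mu^*\ca{\ph}\) consists of closed, \emph{exact} summands, i.e.\ provided \(\ca{\ph}\) is written as \(\ca{\ph}_{\mathrm{inv}} + \dd\si\) with \(\ca{\ph}_{\mathrm{inv}}\) of grading-weight zero.

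Concretely I would proceed as follows. First, recall from \sref{FFKM-constr} the presentation of \((\ca{\M},\ca{\ph})\) as a resolution of an orbifold \(\ca{\M}_0\) carrying a singular closed \g\ 3-form, fixing notation for the invariant coframe on \(\ca{\M}_0 \osr \Sigma_0\) (where \(\Sigma_0\) denotes the singular set) and for the \EH--type ALE model glued in over a tubular neighbourhood of each component of \(\Sigma_0\). Second, solve the linear constraints on the weights \(a_i\) coming from closedness and from the decomposition \(\ca{\ph} = \ca{\ph}_{\mathrm{inv}} + \dd\si\), and check \(\sum_i a_i > 0\) so that the bulk volume grows as a positive power of \(\mu\); the pointwise \g-condition on the bulk region, for \emph{all} \(\mu \ge 1\), is handled exactly as in \sref{UB-Md}. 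Third --- the crux --- interpolate between \(R_\mu^*\ca{\ph}\) and the (suitably \(\mu\)-rescaled) ALE model on a fixed collar of each component of \(\Sigma_0\), so that the glued 3-form \(\ca{\ph}(\mu)\) is globally smooth, closed, of \g-type, and cohomologous to \(\ca{\ph}\). Finally, integrate: \(\int_{\ca{\M}} vol_{\ca{\ph}(\mu)}\) is dominated by the bulk contribution \(\sim C\mu^{\sum_i a_i}\), which is unbounded, while the exceptional-neighbourhood contribution is \(O(1)\) (or of strictly lower order); this completes the proof.

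The main obstacle --- and the reason, as the introduction notes, that careful resolution of singularities is required --- is the third step: guaranteeing that the rescaled forms are cohomologically constant. On the smooth solvmanifold of Theorem~\ref{UA} the grading \(R_\mu\) is globally defined and the bookkeeping \([R_\mu^*\ph] = [\ph]\) is immediate once the varying summands are exact; here, by contrast, \(R_\mu\) is only defined on \(\ca{\M}_0 \osr \Sigma_0\), and a naive re-gluing with the \emph{fixed} ALE model will in general alter the periods of \(\ca{\ph}\) over 3-cycles meeting the exceptional divisors, changing the class. The remedy is to choose the resolution data --- in particular the size parameters of the glued ALE pieces --- so that they may be rescaled in tandem with \(\mu\), and to arrange the relevant generators of \(\dR3(\ca{\M})\) to be representable transversally to (or away from) the rescaled collars; one then exhibits an explicit primitive of \(\ca{\ph}(\mu) - \ca{\ph}\), absorbing any uniformly \(C^0\)-small error term whose exactness is automatic by stability. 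A subsidiary but routine point is to check that \(\ca{\ph}(\mu)\) is everywhere of \g-type for every \(\mu \ge 1\), not merely for \(\mu\) near \(1\), and that \(\ca{\ph}(1)\) may be taken equal to \(\ca{\ph}\); both follow from the explicit form of the interpolation together with uniform estimates on the rescaled pieces.
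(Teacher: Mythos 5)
Your outline reproduces the paper's overall strategy (bulk anisotropic scaling by an exact summand, then a \(\mu\)-dependent re-gluing near the singular set, then an explicit check that the class is fixed), but the crux step is exactly where the proposal has genuine gaps, and the remedies you sketch would fail as stated. First, ``choose the size parameters of the glued ALE pieces so that they may be rescaled in tandem with \(\mu\)'' is precisely the naive construction the paper rules out: by \rref{Rk-on-Coh} (and \cite[Prop.\ 22]{ACG2CMwFBNb1eq1}) the class \(\lt[\ca{\ph}\rt]\) genuinely depends on the \EH\ parameter \(t\) (equivalently on \(\ep\)), because its periods over 3-cycles of the form \((\text{circle in } T)\x\fr{E}\) are proportional to \(\int_{\fr{E}}\ca{\om}_t\). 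Those cycles meet the exceptional divisors, so you cannot ``arrange the relevant generators of \(\dR{3}(\ca{\M})\) to be representable away from the rescaled collars''; and the phrase ``any uniformly \(C^0\)-small error term whose exactness is automatic by stability'' is wrong in principle --- stability gives openness of the \g\ condition, never exactness (small periods are not zero periods). The paper instead keeps \(\ep\) and \(t\) fixed and implements the rescaling of the local model as \(\ca{\ze}^\mu = \mu^{-3}\lt(\fr{H}^\mu\rt)^*\ze^\mu\) for an anisotropic homothety \(\fr{H}^\mu\) acting on both the torus directions and the ALE fibre; the stretch of \(y^3\) compensates the overall factor \(\mu^{-3}\), so the exceptional periods are unchanged, and the equality \(\lt[\ca{\ph}^\mu\rt]=\lt[\ca{\ph}\rt]\) is then \emph{proved}, not assumed, by constructing an explicit global primitive region by region, using the Generalised Poincar\'e Lemma with the explicit chain homotopy for \(\fr{h}^\mu\simeq\Id\) to compute the primitive near the gluing boundaries.

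Second, you miss the obstruction that forces a modified local model in the first place. Deforming the bulk-scaled form \(\h{\vph}^\mu\) to the flat product model \(\h{\xi}^\mu\) on a \(\mu\)-independent neighbourhood of the singular locus is impossible by a direct cut-off argument: the offending difference contains \(y^1\dd y^{147}\), whose natural primitive \(\tfrac12\lt(y^1\rt)^2\dd y^{47}\) has \(\h{\xi}^\mu\)-norm of order \(\mu^2\lt(y^1\rt)^2\), so cutting it off on a fixed collar destroys the \g\ condition for large \(\mu\), while shrinking the collar with \(\mu\) changes the class of the resolution. The paper's solution (\lref{quadlem}, \pref{almostprod}) is to keep \(y^1\dd y^{147}\) in the boundary model, cut off only the remaining part \(\dd\h{\al}_{\bf a}\) (which does satisfy \(\mu\)-uniform bounds), and dispose of the extra term only after the homothety, where it appears as \(\mu^{-3}\si\) and is small (\lref{tech-lem-for-res}). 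Without this ingredient --- or some substitute for it --- your step ``interpolate between \(R_\mu^*\ca{\ph}\) and the \(\mu\)-rescaled ALE model on a fixed collar'' cannot be carried out so that the glued form is simultaneously of \g-type for all \(\mu\), closed, and in the fixed class \(\lt[\ca{\ph}\rt]\). The bulk scaling and the final volume estimate in your plan are fine and agree with the paper, but as it stands the proposal defers precisely the part of the argument that constitutes the proof.
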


In comparison to \(N\), the construction of \(\ca{\M}\) is significantly more complicated and requires a `resolution of singularities' argument, inspired by Joyce's Kummer-type construction of \g-manifolds \cite{CR7MwHG2I, CR7MwHG2II, CMwSH}.  The more complicated nature of the manifold \(\ca{\M}\) means that, to the author's knowledge, the unboundedness of \(\CH\) on the space \(\lt[\ca{\ph}\rt]_+\) cannot be proved using any existing techniques in the literature.

As for the manifold \(N\), the proof of \tref{FFKM-UA} produces an explicit family \(\lt(\ca{\ph}^\mu\rt)_{\mu \in [0,\0)}\) of \g\ 3-forms in \(\lt[\ca{\ph}\rt]_+\) with unbounded volume and, again, the large volume limit of the family \(\lt(\ca{\M},\ca{\ph}^\mu\rt)\) can be described explicitly.  By \cite[\S9]{ACG2CMwFBNb1eq1}, there is a natural fibration \(\pi: \ca{\M} \to \lt(\lqt{\bb{T}^2}{\{\pm1\}}\rt) \x S^1\).

\begin{Thm}\label{FFKM-CT}
Let \(\lt(\ca{\M},\ca{\ph}^\mu\rt)_{\mu \in [1,\infty)}\) be the family constructed in the proof of \tref{FFKM-UA}.  Then the large volume limit of \(\lt(\ca{\M}, \ca{\ph}^\mu\rt)\) corresponds to an adiabatic limit of the fibration \(\pi\).  Specifically, let \(B\) denote the orbifold \(\lt(\lqt{\bb{T}^2}{\{\pm1\}}\rt) \x S^1\).  Then:
\ew
\lt(\ca{\M}, \mu^{-6}\ca{\ph}^\mu\rt) \to (B,d) \as \mu \to \infty
\eew
in the Gromov--Hausdorff sense, for a suitable metric (i.e.\ distance function) \(d\) on \(B\).
\end{Thm}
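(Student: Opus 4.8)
The plan is to read off the adiabatic structure of the rescaled metrics $g_{\mu^{-6}\ca{\ph}^{\mu}}$ from the explicit formulae for the family $\ca{\ph}^{\mu}$ produced in the proof of \tref{FFKM-UA}, and then to deduce Gromov--Hausdorff convergence by invoking the author's general collapsing theorem for orbifolds. First, away from the exceptional set of the resolution of singularities, the \g\ 3-form $\ca{\ph}^{\mu}$ is given by explicit formulae on a region with a simple product-type structure over $B$; a direct computation of the induced metric should show that, over this region, $g_{\mu^{-6}\ca{\ph}^{\mu}}$ splits as a fixed orbifold metric $\pi^{*}g_{B}$ on $B$ together with fibrewise terms vanishing at a definite rate in $\mu$ --- the normalising power $\mu^{-6}$ being precisely the one for which the base directions of $\pi$ remain of bounded, non-degenerate size while the four-dimensional fibres shrink. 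In particular the fibres of $\pi$ then have diameter $O(\mu^{-c})$ for some $c > 0$, and the distance function induced on the base converges to that of $g_{B}$; this identifies the prospective limit as the adiabatic limit of $\pi$.

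Second, I would analyse the exceptional regions of the resolution, which sit over tubular neighbourhoods of the singular locus of $B$, namely the four circles $\{p\}\x S^{1}$ with $p$ a fixed point of $-1$ on $\bb{T}^{2}$. At the appropriate scale the metric there is modelled on a product of an \EH-type (more generally, ALE) $4$-manifold with a flat factor, the gluing parameter being either sent to $0$ as $\mu\to\infty$ or held fixed and dominated by the collapse of the transverse directions. The points to verify are that the diameters of these pieces also tend to $0$, and that $\pi$ restricts there to a fibration over a neighbourhood of the singular circle which is compatible with its $\bb{Z}/2$-orbifold structure, so that each exceptional region is crushed in the limit onto an arc of the corresponding singular circle carrying the correct transverse cone angle $\pi$.

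Third, with this collapsing picture established on all of $\ca{\M}$, I would verify the hypotheses of the author's orbifold collapsing theorem and apply it to obtain $(\ca{\M},\mu^{-6}\ca{\ph}^{\mu})\to(B,d)$, where $d$ is the length metric on the orbifold $B$ determined by $g_{B}$ (equivalently, the quotient metric on $(\lqt{\bb{T}^{2}}{\{\pm1\}})\x S^{1}$). Alternatively, and more elementarily, one can exhibit $\pi$ --- composed with a fixed identification of the base --- as an $\ep(\mu)$-Gromov--Hausdorff approximation $\ca{\M}\to(B,d)$ with $\ep(\mu)\to 0$; besides the fibre-diameter bound, this requires a lower bound showing that a $\pi$-horizontal path in $\ca{\M}$ cannot be appreciably shortened by detouring through the collapsing fibres --- a standard minimising-geodesic argument away from the exceptional set.

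The step I expect to be the main obstacle is the analysis near the exceptional set. There the metric does not have uniformly bounded curvature --- curvature concentrates on the ALE bubbles --- so one must either check that the author's collapsing theorem is formulated robustly enough to admit shrinking ALE regions of this type, or run a direct $\ep$-net argument showing that each resolved piece is Gromov--Hausdorff close to an arc of a singular circle of $B$ with the correct transverse structure. Reconciling the description of the limit obtained over the product region with the one obtained over the resolved region, and checking that the two glue to the single orbifold metric $d$ on $B$, is the delicate point; this is exactly where the careful cohomological normalisation of $\ca{\ph}^{\mu}$ from \tref{FFKM-UA} enters, since it is what guarantees that the resolution data contribute neither extra length nor extra limit points to the limit space.
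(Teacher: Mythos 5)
Your overall architecture (shrinking $\pi$-fibres plus an appeal to the orbifold collapsing theorem of \cite{AGCRfOvSF}) is the same as the paper's, which verifies the five hypotheses \condsref{CT-Cond} of that theorem; the computation away from the exceptional set and the fibre-diameter estimates you sketch do go through essentially as you describe. The genuine gap is exactly at the point you flag as the main obstacle, and it is not merely technical: both your identification of the limit metric and your proposed ``no shortcut'' argument are wrong near the resolved regions. The interpolated K\"ahler forms \(\ca{\om}_t\) glued between the \EH\ metric and \(\h{\om}\) cannot satisfy \(\ca{\om}_t^2 \ge 2\,vol_0\) everywhere; the paper proves instead a sharp, \(t\)-independent bound \(\ca{\om}_t^2 \ge 2\up^2 vol_0\) with \(\up\in(0,1)\) attained on a ``waist'' \(\{r=\fr{r}_t\}\) (\pref{tld-om-const}). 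Consequently, inside each resolved neck the coefficient of \((\dd y^3)^{\ts2}\) in \(g_{\mu^{-6}\ca{\ph}^\mu}\) dips to \(\nu^{\frac{4}{3}}\) with \(\min\nu=\up\), so a path running in the \(S^1\)-direction of the base through the waist is shorter than its Euclidean shadow by the factor \(\up^{\frac{2}{3}}\), uniformly in \(\mu\). Hence detours through the exceptional region \emph{do} shorten horizontal paths, your minimising-geodesic argument fails there, and the limit distance on \(B\) is not the flat quotient metric you assert: along the four singular circles it is contracted by \(\up^{\frac{2}{3}}\), and the correct limit is the stratified quasi-Finslerian structure of \eref{mcL} in \tref{FFKM-CT2}, Euclidean only outside a neighbourhood of the singular locus. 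Since the theorem as stated only asks for ``a suitable metric \(d\)'', the statement survives, but your proof commits to the wrong \(d\), so the \(\ep(\mu)\)-Gromov--Hausdorff approximation you propose to the flat orbifold would not have \(\ep(\mu)\to0\).

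The missing ingredients are precisely the uniform two-sided control of \(g^\mu\) transverse to the fibres near the exceptional locus — the lower bound \(g^\mu \ge \bigl(1 - C\De_0\mu^{-3}\bigr)\up^{\frac{4}{3}}\,\ca{\fr{f}}^*g_\E\) on all of \(\ca{\M}\), which requires the \(t\)-independent volume bound of \pref{tld-om-const} — together with the explicit short paths at the waist used to verify the boundary-matching condition \condsref{CT-Cond}(5) (see \pref{hyps-2-4}). Two smaller points: the limit must be set up as a stratified Riemannian \emph{semi}-metric regular with respect to \(\ker\dd\h{q}\) (including a separately defined metric \(\up^{\frac{4}{3}}(\dd y^3)^{\ts2}\) on the singular strata), not as a genuine orbifold metric pulled back from \(B\); and your appeal to the cohomological normalisation of \(\ca{\ph}^\mu\) as the reason the resolution ``contributes neither extra length nor extra limit points'' is misplaced — that normalisation is what makes \tref{FFKM-UA} work, whereas the resolution data genuinely do alter the limit metric, through the constant \(\up\).
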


For an explicit description of the metric \(d\), the reader is directed to \tref{FFKM-CT2} (see also \eref{mcL}).  Essential to the proof of \tref{FFKM-CT} is a general collapsing result for families of Riemannian metrics on closed manifolds (or, more generally, orbifolds) proven by the author in \cite{AGCRfOvSF}.  I, also, remark that \tref{FFKM-CT} again provides evidence that the functional \(\CH\) can be of significant geometric interest, even on manifolds which do not admit torsion-free \g\ 3-forms.

The basic principle underlying the proofs of both \trefs{UA} and \ref{FFKM-UA} is a direct scaling argument, described in \sref{UB-Md} (see, in particular, \pref{UB-Md-Prop}); the argument does not rely on the solubility of the Laplacian flow equation (see \rref{LF-Rk}).  To the author's knowledge, this scaling argument has not been adopted previously in the literature and it is expected that this technique can be used to prove the unboundedness above of \(\CH\) on other explicit examples of closed 7-manifolds with closed \g\ 3-forms.

The results of the present paper were obtained during the author's doctoral studies, which where supported by EPSRC Studentship 2261110.

\section{Unboundedness of \(\CH\) via scaling}\label{UB-Md}

I begin with an algebraic lemma:
\begin{Lem}\label{scaling-lem}
(1) Let \(\lt(\th^1,...,\th^7\rt)\) denote the canonical basis of \(\lt(\bb{R}^7\rt)^*\) and define:
\ew
\ph =&\ \th^{123} + \th^{145} + \th^{167} + \th^{246} - \th^{257} - \th^{347} - \th^{356}\\
=&\ \ph_1 \hs{1.7mm} + \ph_2 \hs{2.6mm} + \ph_3 \hs{2.6mm} + \ph_4 \hs{2.6mm} + \ph_5 \hs{2.6mm} + \ph_6 \hs{2.6mm} + \ph_7.
\eew
Then for all \(\la_1,...,\la_7 \in (0,\infty)\):
\ew
\ph_{(\la_1,...,\la_7)} = \sum_{i=1}^7 \la_i \ph_i
\eew
is of \g-type and:
\ew
vol_{\ph_{(\la_1,...,\la_7)}} = \lt(\prod_{i=0}^7 \la_i\rt)^\frac{1}{3}vol_\ph.
\eew

(2) Let \(\bb{F}\) be a 4-dimensional real vector space equipped with a complex structure \(J\).  Let \(\om\) be a real, positive \((1,1)\)-form on \(\bb{F}\) and let \(\Om\) be a non-zero complex \((2,0)\)-form on \(\bb{F}\).   Define a constant \(\nu>0\) by the equation:
\e\label{nu}
2\om^2 = \nu^2\Om \w \ol{\Om}.
\ee
Then given a 3-dimensional real vector space \(\bb{G}\) with basis \(\lt(g^1,g^2,g^3\rt)\) of \(\bb{G}^*\), the 3-form on \(\bb{F}\ds\bb{G}\) defined by:
\ew
\ph' = g^{123} + g^1\w\om - g^2\w\fr{Re}\Om + g^3\w\fr{Im}\Om
\eew
is of \g-type.   Moreover:
\caw
g_{\ph'} = \nu^\frac{4}{3}\lt(g^1\rt)^{\ts2} + \nu^{-\frac{2}{3}}\lt[\lt(g^2\rt)^{\ts2} + \lt(g^3\rt)^{\ts2}\rt] + \nu^{-\frac{2}{3}}g_\om;\\
vol_{\ph'} = \frac{\nu^\frac{2}{3}}{4} g^{123} \w \Om \w \ol{\Om};\\
\Hs_{\ph'}\ph' = \frac{\nu^\frac{2}{3}}{4}\Om \w \ol{\Om} + \nu^{-\frac{4}{3}}g^{23} \w \om + \nu^\frac{2}{3} g^{13} \w \fr{Re}\Om + \nu^\frac{2}{3} g^{12} \w \fr{Im}\Om,
\caaw
where \(g_\om\) is the metric on \(\bb{F}\) induced by \(J\) and the real, positive \((1,1)\)-form \(\om\).
\end{Lem}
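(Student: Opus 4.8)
The plan is to verify both parts by reduction to the standard model. For part (1), the key observation is that the seven monomials $\ph_1,\dots,\ph_7$ are supported on pairwise disjoint triples of indices \emph{after} one rescales the coframe, so a diagonal change of basis converts $\ph_{(\la_1,\dots,\la_7)}$ back into the standard form $\ph$. Concretely, I would seek positive scalars $c_1,\dots,c_7$ such that the substitution $\th^i \mapsto c_i\,\th^i$ sends $\ph$ to $\ph_{(\la_1,\dots,\la_7)}$; this requires $c_ic_jc_k = \la_m$ whenever $\{i,j,k\}$ is the triple indexing $\ph_m$. Since the seven defining triples of $\Gg_2$ form (the lines of) the Fano plane, every index appears in exactly three triples, and the resulting $7\times 7$ linear system for $\log c_i$ in terms of $\log\la_m$ is the Fano incidence matrix, which is invertible over $\bb{Q}$; hence such $c_i$ exist and are unique. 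This simultaneously shows $\ph_{(\la_1,\dots,\la_7)}$ is of $\Gg_2$-type and, since $vol_{\ph} = \th^{1\cdots7}$ scales by $\prod_i c_i$, that $vol_{\ph_{(\la_1,\dots,\la_7)}} = \bigl(\prod_i c_i\bigr)\,vol_\ph$. It then remains to check $\prod_i c_i = \bigl(\prod_m \la_m\bigr)^{1/3}$: taking the product of all seven equations $c_ic_jc_k=\la_m$ gives $\bigl(\prod_i c_i\bigr)^3 = \prod_m\la_m$ because each index is counted three times, which is exactly the claim. (One could alternatively avoid solving the system by arguing directly that $\GL_+(7)$ acts transitively on $\ww[+]{3}$ with stabiliser $\Gg_2$, but the Fano-plane computation is cleanest and makes the volume formula transparent.)

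For part (2), I would first normalise the Hermitian data on $\bb{F}$. Choose a unitary coframe: a complex basis $(\ze^1,\ze^2)$ of $\bb{F}^{*1,0}$ so that $\om = \tfrac{i}{2}(a_1\,\ze^1\w\ol{\ze^1} + a_2\,\ze^2\w\ol{\ze^2})$ with $a_1,a_2>0$ and $\Om = \ze^1\w\ze^2$ (absorbing the nonzero scalar and using that $\om$ is positive $(1,1)$). Writing $\ze^j = e^{2j} + i e^{2j+1}$ for real $1$-forms puts $\om$, $\fr{Re}\,\Om$, $\fr{Im}\,\Om$ into explicit real normal form on $\bb{F}$; computing $\om^2$ and $\Om\w\ol{\Om}$ then identifies $\nu$ via $a_1a_2 = \nu^2$ (up to a fixed numerical constant matching \eref{nu}). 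With this normal form, $\ph'$ becomes a fully explicit $3$-form in the seven real coframe elements $g^1,g^2,g^3,e^2,\dots,e^5$. The task is then to exhibit a positive diagonal rescaling of this coframe bringing $\ph'$ to the $\ph_{(\la_1,\dots,\la_7)}$ of part (1) (or directly to standard $\ph$), read off the $\la_i$, and apply part (1) — this yields the $\Gg_2$-type assertion, the metric $g_{\ph'}$ (the diagonal entries $\nu^{4/3}, \nu^{-2/3}, \nu^{-2/3}$ on $g^1,g^2,g^3$ and $\nu^{-2/3}g_\om$ on $\bb{F}$ come directly from the squares of the scaling factors, since $g_\ph$ is the standard Euclidean metric), and the volume form. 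The Hodge dual $\Hs_{\ph'}\ph'$ is then obtained either by applying $\Hs$ termwise in the rescaled orthonormal coframe, or by invoking the standard identity $\Hs_\ph\ph = \tfrac12\th^{12}\w\th^{34}\w\cdots$-type formula for the $\Gg_2$ $4$-form and transporting it back; matching the four groups of terms against $\fr{Re}\,\Om,\fr{Im}\,\Om$ and $\om$ gives the stated powers of $\nu$.

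I expect the bookkeeping in part (2) — tracking how the real and imaginary parts of $\Om$ and the components of $\om$ distribute among the seven monomials of the $\Gg_2$ form, and then confirming that all four displayed powers of $\nu$ come out correctly — to be the main source of friction, though it is entirely mechanical once the unitary normal form is fixed. The one genuine point requiring care is the consistency of the normalising constant: one must check that the factor of $\tfrac12$ in \eref{nu} (rather than, say, $1$) is precisely what makes the exponents $\tfrac{2}{3}$ and $\tfrac{4}{3}$ in $g_{\ph'}$ and $vol_{\ph'}$ come out as claimed; this is a single scalar identity relating $\om^2$, $\Om\w\ol\Om$ and $g^{123}\w\Om\w\ol\Om$, most safely verified in the explicit $2$-complex-dimensional model above. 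No deeper obstacle is anticipated: the whole lemma is a linear-algebra identity dressed up as a statement about $\Gg_2$ normal forms.
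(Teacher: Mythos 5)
Your proposal is correct and follows essentially the same route as the paper: part (1) by seeking a diagonal coframe rescaling, taking logarithms to get the (invertible) Fano-plane incidence system and multiplying the seven equations to obtain the volume formula, and part (2) by choosing an adapted complex coframe putting \((\om,\Om)\) in normal form, reducing to part (1), and reading off \(g_{\ph'}\), \(vol_{\ph'}\) and \(\Hs_{\ph'}\ph'\) from the resulting \g\ coframe. The only (cosmetic) difference is that you normalise \(\Om\) and leave \(\om\) diagonal with eigenvalues \(a_1,a_2\), whereas the paper normalises \(\om = f^{12}+f^{34}\) and puts the factor \(\nu^{-1}\) into \(\Om\); the two are related by a further rescaling of the complex coframe and lead to the same computation.
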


\begin{proof}
Begin with (1).  Let \(\mu_1,...,\mu_7 \in (0,\infty)\) be chosen later, define \(\vth^i = \mu_i \th^i\) for all \(i\) and consider the \g\ 3-form:
\ew
\vph(\mu_1,...,\mu_7) =&\ \hs{2.1mm} \vth^{123} \hs{2.1mm} + \hs{2.1mm} \vth^{145} \hs{2.1mm} + \hs{2.1mm} \vth^{167} \hs{2.1mm} + \hs{2.1mm} \vth^{246} \hs{2.1mm} - \hs{2.1mm} \vth^{257} \hs{2.1mm} - \hs{2.1mm} \vth^{347} \hs{2.1mm} - \hs{2.1mm} \vth^{356}\\
=&\ \mu_{123}\ph_1 + \mu_{145}\ph_2 + \mu_{167}\ph_3 + \mu_{246}\ph_4 + \mu_{257}\ph_5 + \mu_{347}\ph_6 + \mu_{356}\ph_7,
\eew
where \(\mu_{ijk} = \mu_i\mu_j\mu_k\).  Clearly:
\e\label{scaling-vol}
vol_{\vph(\mu_1,...,\mu_7)} = \mu_{1234567}\th^{1234567} = \mu_{1234567}vol_\ph.
\ee
I claim that \(\vph(\mu_1,...,\mu_7) = \ph_{(\la_1,...,\la_7)}\) for suitable \(\mu_i\).  Indeed, this equation is equivalent to the system of equations:
\cag\label{scaling-sys}
\mu_{123} = \la_1 \hs{5mm} \mu_{145} = \la_2 \hs{5mm} \mu_{167} = \la_3\\
\mu_{246} = \la_4 \hs{5mm} \mu_{257} = \la_5 \hs{5mm} \mu_{347} = \la_6\\
\mu_{356} = \la_7,
\caag
and taking \(\log\) (which is possible since all \(\mu_i\) and \(\la_i\) are positive) yields the invertible linear system:
\e\label{lin-sys}
\bpm
1 & 1 & 1 & 0 & 0 & 0 & 0\\
1 & 0 & 0 & 1 & 1 & 0 & 0\\
1 & 0 & 0 & 0 & 0 & 1 & 1\\
0 & 1 & 0 & 1 & 0 & 1 & 0\\
0 & 1 & 0 & 0 & 1 & 0 & 1\\
0 & 0 & 1 & 1 & 0 & 0 & 1\\
0 & 0 & 1 & 0 & 1 & 1 & 0
\epm
\bpm
\log \mu_1\\
\log \mu_2\\
\log \mu_3\\
\log \mu_4\\
\log \mu_5\\
\log \mu_6\\
\log \mu_7
\epm
=
\bpm
\log \la_1\\
\log \la_2\\
\log \la_3\\
\log \la_4\\
\log \la_5\\
\log \la_6\\
\log \la_7
\epm.
\ee
Taking the product of the equations in \eref{scaling-sys} yields:
\ew
\prod_{i=1}^7 \la_i = \lt(\prod_{i=1}^7 \mu_i\rt)^3.
\eew
The formula for \(vol_{\ph_{(\la_1,...,\la_7)}}\) now follows from \eref{scaling-vol}.

Now let \(\bb{F}\), \(\bb{G}\), \(J\), \(\om\), \(\Om\) and \(\ph'\) be as in (2).  Since \(\om\) is a positive \((1,1)\)-form and \(\Om\) is a non-zero \((2,0)\)-form \wrt\ \(J\), one can choose a basis \(\lt(f^1,f^2,f^3,f^4\rt)\) of \(\bb{F}^*\) such that \(f^1 + if^2\) and \(f^3 + if^4\) are \((1,0)\)-forms \wrt\ \(J\) and:
\ew
\om = f^{12} + f^{34} \et \Om = \nu^{-1} \lt(f^1 + if^2\rt)\w\lt(f^3 + if^4\rt),
\eew
where \(\nu\) is defined in \eref{nu}.  Consider the (correctly oriented) basis:
\ew
\lt(\th^1, \th^2, \th^3, \th^4, \th^5, \th^6, \th^7\rt) = \lt(f^1, -f^2, -g^1, -g^2, -f^3, -f^4, -g^3\rt)
\eew
of \(\lt(\bb{F} \ds \bb{G}\rt)^*\); then \wrt\ this basis:
\ew
\ph' = \th^{123} + \nu^{-1}\th^{145} + \nu^{-1}\th^{167} + \nu^{-1}\th^{246} - \nu^{-1}\th^{257} - \th^{347} - \th^{356}.
\eew
This is of \g-type by (1).  The explicit formulae for \(g_\ph\), \(vol_\ph\) and \(\Hs_\ph\ph\) follow by solving the linear system in \eref{lin-sys} explicitly to obtain the `\g\ basis':
\ew
\lt(\vth^1, \vth^2, \vth^3, \vth^4, \vth^5, \vth^6, \vth^7\rt) = \lt(\nu^{-\frac{1}{3}}\th^1, \nu^{-\frac{1}{3}}\th^2, \nu^\frac{2}{3}\th^3, \nu^{-\frac{1}{3}}\th^4, \nu^{-\frac{1}{3}}\th^5, \nu^{-\frac{1}{3}}\th^6, \nu^{-\frac{1}{3}}\th^7\rt).
\eew

\end{proof}

Applying \lref{scaling-lem} to manifolds yields the following unboundedness result for the functional \(\CH\):

\begin{Prop}\label{UB-Md-Prop}
(1) Let \(\M\) be a closed, parallelisable 7-manifold, let \(\th^1,...,\th^7\) be a basis of 1-forms and let \(\ph\) be the \g\ 3-form:
\ew
\ph =&\ \th^{123} + \th^{145} + \th^{167} + \th^{246} - \th^{257} - \th^{347} - \th^{356}\\
=&\ \ph_1 \hs{1.7mm} + \ph_2 \hs{2.6mm} + \ph_3 \hs{2.6mm} + \ph_4 \hs{2.6mm} + \ph_5 \hs{2.6mm} + \ph_6 \hs{2.6mm} + \ph_7.
\eew
Suppose that \(\dd\ph = 0\) and that there exists \(I \cc \{1,...,7\}\) such that \(\ph_I = \sum_{i \in I} \ph_i\) is exact.  Then for all \(\la \ge 0\), \(\ph(\la) = \ph + \la\ph_I\) is a closed \g\ 3-form in the same cohomology class as \(\ph\) satisfying \(\CH(\ph(\la)) = (1 + \la)^{\frac{|I|}{3}}\CH(\ph)\).  In particular:
\ew
\sup_{\ph' \in [\ph]_+} \CH(\ph') = \infty.
\eew

(2) Let \(\M\) be a closed, oriented 7-manifold, let \(\T\M \cong \bb{R}^3 \ds \mc{F}\) for some rank 4 distribution \(\mc{F}\) on \(\M\) (such a splitting always exists by \cite[Table 1]{KTOttEoVF}), let \((g^1,g^2,g^3)\) be a basis of 1-forms for the trivial bundle \(\lt(\bb{R}^3\rt)^* \pc \lt(\bb{R}^3 \ds \mc{F}\rt)^* \cong \T^*\M\), let \(J\) be a section of \(\End(\mc{F})\) satisfying \(J^2 = -\Id\), let \(\om\) be a real, positive \((1,1)\)-form on \(\mc{F}\) and \(\Om\) be a non-zero \((2,0)\)-form on \(\mc{F}\) (\wrt\ \(J\)), and let \(\ph'\) be the \g\ 3-form:
\ew
\ph' = g^{123} + g^1\w\om - g^2\w\fr{Re}\Om + g^3\w\fr{Im}\Om.
\eew
Suppose that \(\dd\ph' = 0\) and that \(g^1 \w \om\) is exact.  Then for all \(\la \ge 0\), \(\ph'(\la) = \ph' + \la g^1 \w \om\) is a closed \g\ 3-form in the same cohomology class as \(\ph'\), satisfying \(\CH(\ph'(\la)) = (1 + \la)^\frac{2}{3}\CH(\ph')\).  In particular:
\ew
\sup_{\ph'' \in [\ph']_+} \CH(\ph') = \infty.
\eew

\noindent Likewise, if \(g^2 \w \fr{Re}\Om - g^3 \w \fr{Im}\Om\) is exact, then for all \(\la > 0\), \(\ph''(\la) = \ph' - \la\lt(g^2 \w \fr{Re}\Om - g^3 \w \fr{Im}\Om\rt)\) is a closed \g\ 3-form in the same cohomology class as \(\ph'\), satisfying \(\CH(\ph''(\la)) = (1 + \la)^\frac{4}{3}\CH(\ph')\), so that once again:
\ew
\sup_{\ph'' \in [\ph']_+} \CH(\ph') = \infty.
\eew
\end{Prop}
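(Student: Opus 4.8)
The plan is to deduce the whole proposition, pointwise and fibrewise, from \lref{scaling-lem}, and then to exploit the exactness hypotheses to control the differential and the de Rham class. The one structural fact I would use beyond \lref{scaling-lem} is that $\CH(\ph') = \int_\M vol_{\ph'}$, so that any fibrewise identity of the shape $vol_{\ph(\la)} = c(\la)\, vol_\ph$ integrates immediately to $\CH(\ph(\la)) = c(\la)\,\CH(\ph)$.

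For part (1), I would first note that at each $x \in \M$ the covectors $\th^1|_x,\dots,\th^7|_x$ form a basis of $\T^*_x\M$ relative to which $\ph|_x$ is exactly the normal form of \lref{scaling-lem}(1). Setting $\la_i = 1$ for $i \notin I$ and $\la_i = 1+\la$ for $i \in I$, one has $\ph(\la)|_x = \ph_{(\la_1,\dots,\la_7)}|_x$, so \lref{scaling-lem}(1) shows simultaneously that $\ph(\la)|_x$ is of \g-type at every $x$ (hence $\ph(\la) \in \Om^3(\M)$ is a \g\ 3-form) and that $vol_{\ph(\la)} = \left(\prod_{i=1}^{7}\la_i\right)^{1/3}vol_\ph = (1+\la)^{|I|/3}vol_\ph$. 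Since $\ph_I$ is exact it is closed, so $\dd\ph(\la) = \dd\ph + \la\,\dd\ph_I = 0$, and $\ph(\la) - \ph = \la\ph_I$ is exact, whence $[\ph(\la)] = [\ph]$ and $\ph(\la) \in [\ph]_+$. Integrating the volume identity then gives $\CH(\ph(\la)) = (1+\la)^{|I|/3}\CH(\ph)$; as $\CH(\ph) > 0$ and $|I| \ge 1$, letting $\la \to \infty$ yields $\sup_{[\ph]_+}\CH = \infty$.

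For part (2), I would work fibrewise, applying \lref{scaling-lem}(2) at each $x$ with $\bb{F} = \mc{F}_x$ and $\bb{G} = \bb{R}^3$, which already exhibits $\ph'$ as a \g\ 3-form. For the first rescaling, rewrite $\ph'(\la) = g^{123} + g^1\w\left((1+\la)\om\right) - g^2\w\fr{Re}\Om + g^3\w\fr{Im}\Om$; here $(1+\la)\om$ is again a real positive $(1,1)$-form, and the constant attached to the pair $\left((1+\la)\om,\Om\right)$ by \eref{nu} is $(1+\la)\nu$, so \lref{scaling-lem}(2) gives $vol_{\ph'(\la)} = \frac{1}{4}\left((1+\la)\nu\right)^{2/3}g^{123}\w\Om\w\ol{\Om} = (1+\la)^{2/3}vol_{\ph'}$. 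For the second rescaling, rewrite $\ph''(\la) = g^{123} + g^1\w\om - g^2\w\fr{Re}\left((1+\la)\Om\right) + g^3\w\fr{Im}\left((1+\la)\Om\right)$; here $(1+\la)\Om$ is a non-zero $(2,0)$-form, and the constant attached to $\left(\om,(1+\la)\Om\right)$ is $(1+\la)^{-1}\nu$, so $vol_{\ph''(\la)} = \frac{1}{4}\left((1+\la)^{-1}\nu\right)^{2/3}g^{123}\w(1+\la)^2\Om\w\ol{\Om} = (1+\la)^{4/3}vol_{\ph'}$. In both cases the new form differs from $\ph'$ by $\la$ times an exact form (precisely the hypothesis being invoked), so it is closed, stays in $[\ph']$, and hence in $[\ph']_+$; integrating the two volume identities gives $\CH(\ph'(\la)) = (1+\la)^{2/3}\CH(\ph')$ and $\CH(\ph''(\la)) = (1+\la)^{4/3}\CH(\ph')$, each unbounded above as $\la \to \infty$, so that $\sup_{[\ph']_+}\CH = \infty$ in either case.

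I do not anticipate a genuine obstacle: the argument is essentially bookkeeping. The only computations needing care are the recomputations of the constant $\nu$ of \eref{nu} after rescaling $\om$ (resp.\ $\Om$) in part (2), and checking that the exponents $2/3$ and $4/3$ emerge correctly by combining the $\nu^{2/3}$ factor in the volume formula of \lref{scaling-lem}(2) with the change in $\Om\w\ol{\Om}$ under the rescaling. I would also remark in passing that the notion of \g-type used in \lref{scaling-lem} already incorporates compatibility with the ambient orientation (via a correctly oriented basis), so that $\ph(\la)$, $\ph'(\la)$ and $\ph''(\la)$ are genuine \g\ 3-forms on the oriented manifold $\M$, as is needed for membership in $[\ph]_+$, resp.\ $[\ph']_+$.
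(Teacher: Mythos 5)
Your proposal is correct and follows exactly the route the paper intends: the paper states \pref{UB-Md-Prop} as an immediate consequence of \lref{scaling-lem} (offering no further proof), and your argument supplies precisely that — a fibrewise application of \lref{scaling-lem}(1) with weights \(1\) and \(1+\la\), resp.\ of \lref{scaling-lem}(2) with \(\om \mt (1+\la)\om\) or \(\Om \mt (1+\la)\Om\) and the correctly recomputed constant \(\nu\), combined with the exactness hypotheses to preserve closedness and the de Rham class, and integration of the resulting volume identity. The exponents \((1+\la)^{|I|/3}\), \((1+\la)^{2/3}\) and \((1+\la)^{4/3}\) all check out, so there is nothing to add.
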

~

\section{The unboundedness above of \(\CH\) on \(\lt(N,\ph(\al,\be,\la)\rt)\)}\label{N-mfld}

I begin by recalling the construction of the Nakamura manifold \(X\) from \cite{OSGCYM}.  Define a product \(*\) on \(\bb{C}^3\) via the formula:
\ew
\lt(u^1,u^2,u^3\rt) * \lt(w^1,w^2,w^3\rt) = \lt(u^1 + w^1, e^{-w^1}u^2 + w^2, e^{w^1}u^3 + w^3\rt).
\eew
Then \(\lt(\bb{C}^3,*\rt)\) is a complex, soluble, non-nilpotent Lie group, which I denote \(H\).  Equivalently, one may identify:
\e\label{H}
H = \lt\{ \bpm e^{w^1} & 0 & 0 & 0\\ 0 & e^{-w^1} & 0 & 0\\ 0 & 0 & 1 & 0\\ w^3 & w^2 & w^1 & 1 \epm \in \SL(4;\bb{C}) ~\m|~ (w^1,w^2,w^3) \in \bb{C}^3 \rt\}.
\ee
A basis of (complex) right-invariant 1-forms on \(H\) is given by:
\ew
\Th^1 = \dd w^1, \hs{5mm} \Th^2 = e^{w^1}\dd w^2 \et \Th^3 = e^{-w^1}\dd w^3.
\eew
Let \(\ell = \log \frac{3 + \sqrt{5}}{2}\), \(m = \frac{\sqrt{5} - 1}{2}\) and define \(\De \pc H\) to be the uniform (i.e.\ discrete and co-compact) subgroup generated by the six elements\footnote{These formulae differ from those in \cite{OSGCYM}, as the author of this paper has discovered an error {\it op.\ cit.}, which has been corrected in the formulae here presented.}:
\begin{alignat*}{3}
e_1 = (\ell,&0,0), & \hs{5mm} e_2 = (2\pi i, 0, &0), & \hs{5mm} e_3 = (0, &-m, 1)\\
e_4 = (0, &1, m), & \hs{5mm} e_5 = (0, -2\pi i m, &2\pi i), & \hs{5mm} e_6 = (0, &2\pi i, 2\pi i m).
\end{alignat*}
The quotient \(X = \rqt{H}{\De}\) is a compact soluble manifold called a Nakamura manifold (the first such examples being constructed by Nakamura \cite{CPM&tSD}).  Explicitly, write \(P = \bpm -m & 1\\ 1 & m \epm\); then clearly:
\e\label{4-torus}
\rqt{H}{\<e_2, e_3,e_4,e_5,e_6\?} &\cong \rqt{\bb{C}}{2\pi i\bb{Z}} \x \rqt{\bb{C}^2}{P\lt(\bb{Z}^2 + 2\pi i\bb{Z}^2\rt)}\\
&\cong \lt(\bb{R} \x S^1\rt) \x \fr{T},
\ee
where \(\fr{T}\) denotes the complex torus \(\rqt{\bb{C}^2}{P\lt(\bb{Z}^2 + 2\pi i\bb{Z}^2\rt)}\).  Moreover, from the equation:
\ew
P \bpm 2 & 1\\ 1 & 1 \epm P^{-1} = \bpm e^{-\ell} & 0\\ 0 & e^\ell \epm
\eew
it follows that the linear map \(\bpm e^{-\ell} & 0\\ 0 & e^\ell \epm\) on \(\bb{C}^2\) descends to define a map \(\La\) on \(\fr{T}\).  One can then write:
\ew
X = \lt(\rqt{\lt(\bb{R} \x \fr{T}\rt)}{\<T\?}\rt) \x S^1
\eew
where \(T\) is the automorphism given by \((w,p) \in \bb{R} \x \fr{T} \mt (w + \ell, \La(p)) \in \bb{R} \x \fr{T}\).  The right-invariant 1-forms \(\Th^i\) descend to a basis of (complex) 1-forms on \(X\), again denoted \(\Th^i\), which satisfy:
\e\label{dRI}	
\dd\Th^1 = 0, \hs{5mm} \dd\Th^2 = \Th^1 \w \Th^2, \hs{5mm} \dd\Th^3 = -\Th^1 \w \Th^3.
\ee
Write \(g^1 = \fr{Re}\Th^1\) and \(g^2 = \fr{Im}\Th^1\), so that, in particular, \(\dd g^1 = \dd g^2 = 0\).

Now consider the manifold \(N = X \x S^1\).  I begin by establishing the following result, which is not proved in the literature, but which nevertheless appears known to some authors (cf.\ \cite[\S6, Ex.\ 2]{RoG2S}):
\begin{Prop}
The manifold \(N\) admits no torsion-free \g-structures.
\end{Prop}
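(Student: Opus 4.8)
The plan is to obstruct torsion-free \g-structures on $N$ at the level of the fundamental group. Suppose, for contradiction, that $N$ admits a torsion-free \g\ 3-form $\ph$. By the Fern\'{a}ndez--Gray result recalled above, $\Hol(g_\ph) \cc \Gg_2$, and hence $g_\ph$ is Ricci-flat (a classical property of metrics with holonomy contained in \g). Since $N$ is closed, I would then apply the Cheeger--Gromoll splitting theorem on the universal cover, together with the usual Bieberbach-type argument for compact Ricci-flat manifolds: $(\tld{N},\tld{g}_\ph)$ splits isometrically as a product $\bb{R}^k \x Y$ of flat Euclidean space with a compact, simply connected Riemannian manifold $Y$, and $N$ possesses a finite cover isometric to a flat torus $T^k$ times $Y$. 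Since $Y$ is simply connected, this cover has fundamental group $\bb{Z}^k$, so $\pi_1(N)$ contains a finite-index subgroup isomorphic to $\bb{Z}^k$; in particular $\pi_1(N)$ is virtually abelian. The remaining task — and the heart of the argument — is to show that $\pi_1(N)$ is \emph{not} virtually abelian.

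For this I would read off $\pi_1(N)$ from the explicit description of $N$ above. Since $N = X \x S^1$ and $X$ is the product of a circle with the mapping torus of $\La\colon\fr{T}\to\fr{T}$, where $\fr{T}\cong\bb{R}^4/\bb{Z}^4$ is a real $4$-torus, one obtains that $\pi_1(N)$ is isomorphic to $G \x \bb{Z}^2$, where $G$ is the semidirect product of $\bb{Z}$ with a normal subgroup $\bb{Z}^4$ on which a chosen generator of the $\bb{Z}$-factor acts via the automorphism $B\in\GL(4;\bb{Z})$ induced by $\La$ on $\pi_1(\fr{T})\cong\bb{Z}^4$, and the two extra $\bb{Z}$-summands come from the two circle factors of $N$. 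Using the identity $P\bpm 2&1\\1&1\epm P^{-1}=\diag(e^{-\ell},e^\ell)$ recorded above, $B$ is conjugate over $\bb{Z}$ to $\bpm 2&1\\1&1\epm\ds\bpm 2&1\\1&1\epm$, so its eigenvalues are $e^{\pm\ell}=\bigl(\tfrac{3\pm\sqrt5}{2}\bigr)^{\pm1}$, each with multiplicity $2$. In particular no eigenvalue of $B$ is a root of unity, so $B^j-\Id$ is invertible for every integer $j\ge1$.

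It now suffices to prove that $G$ is not virtually abelian, for then neither is $G\x\bb{Z}^2$ (any finite-index abelian subgroup of $G\x\bb{Z}^2$ would project onto a finite-index abelian subgroup of $G$). Let $p\colon G\to\bb{Z}$ be the quotient by the distinguished normal $\bb{Z}^4$, and let $H\le G$ be an arbitrary finite-index subgroup. Then $H\cap\bb{Z}^4$ has finite index in $\bb{Z}^4$, hence is isomorphic to $\bb{Z}^4$ and in particular nontrivial, while $p(H)=j\bb{Z}$ for some $j\ge1$. Choosing $h\in H$ with $p(h)=j$, conjugation by $h$ acts on $H\cap\bb{Z}^4$ as $B^j$; since $B^j-\Id$ is invertible, $B^j$ fixes no nonzero vector of $\bb{Z}^4$, so $h$ commutes with no nontrivial element of $H\cap\bb{Z}^4$. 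As $H\cap\bb{Z}^4$ is nontrivial, $H$ contains a pair of non-commuting elements and is therefore non-abelian. Thus every finite-index subgroup of $G$ is non-abelian, so $G$ is not virtually abelian — contradicting the conclusion of the first paragraph.

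The main obstacle is locating and exploiting the hyperbolic monodromy $B$ inside $\pi_1(N)$: once the fundamental group of the Nakamura solvmanifold $X$ is correctly identified, the non-virtual-abelianity of its essential semidirect-product piece is elementary, and the reduction of the whole problem to this, via Ricci-flatness and the Cheeger--Gromoll structure theory, is routine. (The computation uses only the post-footnote structural data $\fr{T}$, $\La$, $P$, $T$ of the excerpt, which already incorporate the correction to the generators of $\De$ mentioned there.)
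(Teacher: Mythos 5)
Your proposal is correct, and it diverges from the paper's argument in its second half. Both proofs open the same way: a torsion-free \g-structure forces $\Hol(g_\ph)\cc\Gg_2$, hence Ricci-flatness, and then the Calabi--Cheeger--Gromoll structure theory for closed Ricci-flat manifolds is invoked (the paper does this via Bochner's technique and Joyce's Thms 3.5.4--3.5.5, which is essentially the statement you quote). The paper, however, feeds in the computation $b^1(N)=3$ to conclude that the universal cover would have to be homeomorphic to $\mathbb{R}^3\times\tld{N}$ with $\tld{N}$ a closed, simply connected $4$-manifold, and contradicts this with Nakamura's fact that the universal cover of $X$ (hence of $N$) is $\mathbb{R}^7$. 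You instead extract only the weaker consequence that $\pi_1(N)$ would be virtually abelian, and refute that directly: from the mapping-torus description of $X$ given in the text, $\pi_1(N)\cong\bigl(\mathbb{Z}^4\rtimes_B\mathbb{Z}\bigr)\times\mathbb{Z}^2$ with $B$ acting, in the lattice basis coming from $P$, as two copies of $\left(\begin{smallmatrix}2&1\\1&1\end{smallmatrix}\right)$, whose eigenvalues $e^{\pm\ell}$ are not roots of unity; your finite-index argument (any finite-index subgroup meets the normal $\mathbb{Z}^4$ in a full-rank sublattice and contains an element acting on it by some $B^{j}$, $j\ge1$, which has no nonzero fixed vectors) is sound, as is the reduction from $G\times\mathbb{Z}^2$ to $G$. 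In terms of trade-offs: the paper's route is shorter but rests on two cited inputs (the Betti number computation of de Bartolomeis--Tomassini and the contractibility of the universal cover of a complex solvmanifold), whereas yours needs neither, using only the explicit description of $X$ already reproduced in the paper plus elementary group theory; the price is the explicit identification of $\pi_1(N)$ and its hyperbolic monodromy, which you carry out correctly.
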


\begin{proof}
The argument is largely topological in nature.   It follows from \cite[Thm.\ 4.1]{OSGCYM} that \(b^1(N) = 3\).  Thus, since metrics with holonomy contained in \g\ are automatically Ricci-flat \cite[Prop.\ 11.8]{RG&HG}, if \(N\) admitted a torsion-free \g-structure then, by applying Bochner's technique as in \cite[Thm.\ 3.5.4 and 3.5.5]{CMwSH}, the universal cover of \(N\) would be homeomorphic to \(\bb{R}^3\x\tld{N}\) for some simply connected, closed 4-manifold \(\tld{N}\).  However the universal cover of \(X\) is \(H \overset{homeo}{\cong} \bb{C}^3\) (a fact which holds more generally for any complex soluble manifold \cite[p.\ 86]{CPM&tSD}) and thus the universal cover of \(N\) is \(\bb{R}^7\), not \(\bb{R}^3\x\tld{N}\).  Thus no torsion-free \g-structures on \(N\) can exist.

\end{proof}

\(N\) does, however, admit closed \g\ 3-forms.  Consider the (complex) rank 2 distribution on \(X\) given by \(\mc{F} = \Ker_{\bb{C}} \Th^1\) and define \((1,1)\) and \((2,0)\)-forms on \(\mc{F}\) by:
\ew
\om = \frac{i}{2}\lt[\Th^2 \w \ol{\Th}^2 + \Th^3 \w \ol{\Th}^3\rt], \hs{5mm} \rh = \frac{i}{2}\lt[\Th^2 \w \ol{\Th}^2 - \Th^3 \w \ol{\Th}^3\rt] \et \Om = \Th^2 \w \Th^3.
\eew
Then \(\om\) is real and positive, \(\rh\) is real and \((\om,\rh,\Om)\) satisfies:
\e\label{dSU2}
\dd\om = 2 g^1 \w \rh, \hs{5mm} \dd\rh = 2g^1 \w \om \et \dd\Om = 0.
\ee
Write \(g^3\) for the canonical 1-form on \(S^1\).  For each \(\al \in \bb{R}\osr\{0\}\), \(\be \in \bb{R}\osr\{0\}\) and \(\la \in \bb{C}\osr\{0\}\) define a 3-form on \(N\) by:
\ew
\ph(\al,\be,\la) = \al\be g^{123} + \al g^1 \w \om - \be g^2 \w \fr{Re}\la\Om + g^3 \w \fr{Im}\la\Om.
\eew
\(\ph(\al,\be,\la)\) defines a \g-structure on \(N\), by \lref{scaling-lem} (applied to the forms \(\al g^1, \be g^2, g^3, \om, \la\Om\)).   Moreover \(\dd\ph(\al,\be,\la) = 0\), by \eref{dSU2}.

\begin{Rk}
The construction of \(\ph(\al,\be,\la)\) above was inspired by the \g\ 3-forms defined by Fern\'{a}ndez \cite{AFoCSG2CM}.  Indeed, in the case \(\la = 1\) and \(\al = \be \in \lt\{\ka \in \bb{R}~\m|~e^\frac{1}{\ka} + e^{-\frac{1}{\ka}} \in \bb{Z}\osr\{2\}\rt\}\), the forms \(\ph(\al,\be,\la)\) are closely related to Fernandez' definition: the key differences are firstly that, for \(H\) as defined above, Fern\'{a}ndez considers a left-quotient of \(H\) and thus constructs left-invariant \g\ 3-forms rather than right-invariant \g\ 3-forms, and secondly that Fern\'{a}ndez reverses the roles of \(g^2\) and the canonical 1-form on \(S^1\); this arises since \cite{AFoCSG2CM} uses the opposite convention for the orientation of \g-structures to the one used in this paper; see \cite[\S2.1]{SNoG2aS7G} for a discussion of the two conventions.  Also, Fern\'{a}ndez' treatment in \cite{AFoCSG2CM} focuses almost exclusively on the manifold \(N\) from the perspective of real differential geometry, and thus does not notice the natural \(\SU(2)\)-structure (with torsion) underlying the construction {\it op.\ cit.}.
\end{Rk}

I now prove \tref{UA}.  Recall the statement of the theorem:\vs{3mm}

\noindent{\bf Theorem \ref{UA}.}
\em The map:
\ew
\bcd[row sep = 0pt]
(\bb{R} \osr \{0\})^2 \x (\bb{C}\osr\{0\}) \ar[r] & \dR{3}(N)\\
(\al,\be,\la) \ar[r, maps to] & \lt[\ph(\al,\be,\la)\rt]
\ecd
\eew
is injective and, for all \((\al,\be, \la) \in (\bb{R} \osr \{0\})^2 \x (\bb{C}\osr\{0\})\), the functional:
\ew
\CH:\lt[\ph(\al,\be,\la)\rt]_+ \to (0,\infty)
\eew
is unbounded above.\vs{2mm}

\em\begin{proof}
Since each of \(\lt(\fr{Re}\Om\rt)^2\), \(g^{13} \w \fr{Re}\Om\), \(-g^{13} \w \fr{Im}\Om\), \(g^{12} \w \fr{Re}\Om\) and \(g^{12} \w \fr{Im}\Om\) are closed, there is a map:
\ew
\bcd[row sep = 0pt]
\dR{3}(N) \ar[r, ^^22 \ch ^^22] & \bb{R}^5\\
{[\xi]} \ar[r, maps to] & \bpm \bigintsss_N \xi \w \lt(\fr{Re}\Om\rt)^2 \vs{1mm} \\ \bigintsss_N \xi \w g^{13} \w \fr{Re}\Om \vs{1mm} \\ \bigintsss_N \xi \w -g^{13} \w \fr{Im}\Om \vs{1mm} \\ \bigintsss_N \xi \w g^{12} \w \fr{Im}\Om \vs{1mm} \\ \bigintsss_N \xi \w g^{12} \w \fr{Re}\Om \epm
\ecd
\eew
A direct calculation shows that, writing \(A = \bigintsss_N g^{123} \w \lt(\fr{Re}\Om\rt)^2 = \bigintsss_N g^{123} \w \lt(\fr{Im}\Om\rt)^2 > 0\), one has:
\ew
\ch\lt([\ph(\al,\be,\la)]\rt) = A\cdot\bpm \al\be \\ \fr{Re}\la \\ \fr{Im}\la \\ \be\fr{Re}\la \\ \be\fr{Im}\la \epm.
\eew
Thus the composite \((\bb{R} \osr \{0\})^2 \x (\bb{C}\osr\{0\}) \oto{\ph(-,-,-)} \dR{3}(N) \oto{\ch} \bb{R}^5\) is injective and hence so too is \((\bb{R} \osr \{0\})^2 \x (\bb{C}\osr\{0\}) \oto{\ph(-,-,-)} \dR{3}(N)\).

Finally, \(g^1 \w \om = \frac{1}{2}\dd \rh\) is exact, by \eref{dSU2}.  Thus the unboundedness of \(\CH\) on the classes \(\lt[\ph(\al,\be,\la)\rt]_+\) follows immediately from \pref{UB-Md-Prop}(2); in particular, writing:
\ew
\ph(\al,\be,\la;\mu) = \al\be g^{123} + \al \mu^6 g^1 \w \om - \be g^2 \w \fr{Re}\la\Om + g^3 \w \fr{Im}\la\Om
\eew
for \(\mu \ge 1\), \pref{UB-Md-Prop}(2) shows that:
\ew
\CH\lt(\ph(\al,\be,\la;\mu)\rt) = \mu^4\CH\lt(\ph(\al,\be,\la)\rt) \to \infty \as \mu \to \infty,
\eew
completing the proof.

\end{proof}

\section{The large volume limit of \(\lt(N,\ph(\al,\be,\la;\mu)\rt)\)}

The aim of this section is to describe the geometry of \(\lt(N,\ph(\al,\be,\la;\mu)\rt)\) as \(\mu \to \0\).  Recall the group \(H\) defined in \eref{H} and the uniform subgroup \(\De\).  The subgroup \(K \pc H\) corresponding to \(w^1 = 0\) is a connected, normal, Abelian Lie subgroup of \(H\), which is maximal nilpotent since \(H\) is non-nilpotent and \(\codim(K,H) = 1\).  By Mostow's Theorem \cite[p.\ 87]{CPM&tSD}, there is a fibration:
\ew
\fr{f}:X = \rqt{H}{\De} \to \rqt{\lt(\rqt{H}{K}\rt)}{\lt(\rqt{\De \1 K}{K}\rt)}
\eew
with fibre \(\rqt{K}{\De \cap K}\).  Explicitly, recall that \(X \cong \lt(\rqt{\lt(\bb{R} \x \fr{T}\rt)}{\<T\?}\rt) \x S^1\), where \(\fr{T}\) is the 4-torus defined in \eref{4-torus} and \(T\) is the automorphism given by \((w,p) \in \bb{R} \x \fr{T} \mt (w + \ell, \La(p)) \in \bb{R} \x \fr{T}\).  Then \(\fr{f}\) is simply the natural projection:
\ew
\lt(\rqt{\lt(\bb{R} \x \fr{T}\rt)}{\<T\?}\rt) \x S^1 \oto{proj} \lt(\rqt{\bb{R}}{\ell\bb{Z}}\rt) \x S^1,
\eew
with fibre \(\fr{T}\).  Using \(\fr{f}\), define a fibration \(\fr{p}:N \to \rqt{\bb{R}}{\ell\bb{Z}}\) via:
\ew
\fr{p}:N = X \x S^1 \oto{proj_1} X \oto{\fr{f}} \lt(\rqt{\bb{R}}{\ell\bb{Z}}\rt) \x S^1 \oto{proj_1} \rqt{\bb{R}}{\ell\bb{Z}}.
\eew

I now prove \tref{CT}.  Recall the statement of the theorem:\vs{3mm}

\noindent{\bf Theorem \ref{CT}.}
\em Let \((\al,\be,\la) \in (\bb{R}\osr\{0\})^2 \x (\bb{C}\osr\{0\})\) and let \((N,\ph(\al,\be,\la;\mu))_{\mu \in [1,\infty)}\) be the family constructed in the proof of \tref{UA}.  Then the large volume limit of \((N,\ph(\al,\be,\la;\mu))\) corresponds to an adiabatic limit of the fibration \(\fr{p}\).  Specifically:
\ew
(N,\mu^{-12}\ph(\al,\be,\la;\mu)) \to \lt(\rqt{\bb{R}}{\ell\bb{Z}}, \al^2 \lt(\la\ol{\la}\rt)^{-\frac{2}{3}} g_\E\rt) \as \mu \to \infty,
\eew
where the convergence is in the Gromov--Hausdorff sense.\vs{2mm}\em

The proof uses the following convergence result, which is a special case of \cite[Thm.\ 1.2]{AGCRfOvSF} (for brevity of notation, given symmetric bilinear forms \(g\) and \(h\), write \(g \ge h\) to mean that \(g - h\) is non-negative definite):
\begin{Thm}\label{Cvgce-Thm}
Let \(E\) and \(B\) be closed manifolds and let \(\pi: E \to B\) be a submersion.  Let \(g^\mu\) be a family of \rmm s on \(E\) and let \(g\) be a \rmm\ on \(B\).  If \(g^\mu \to \pi^*g\) uniformly and there exist constants \(\La_\mu \ge 0\) such that:
\ew
\lim_{\mu \to \infty} \La_\mu = 1 \et g^\mu \ge \La_\mu^2 \pi^*g \text{ for all } \mu \in [1,\infty),
\eew
then \(\lt(E,g^\mu\rt)\) converges to \(\lt(B,g\rt)\) in the \GH\ sense as \(\mu \to \0\).
\end{Thm}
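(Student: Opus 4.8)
The plan is to verify that, for all sufficiently large \(\mu\), the submersion \(\pi\) itself is an \(\varepsilon_\mu\)-approximation from \((E,d_{g^\mu})\) onto \((B,d_g)\) with \(\varepsilon_\mu \to 0\) — meaning that \(\pi(E)\) is \(\varepsilon_\mu\)-dense in \((B,d_g)\) and \(\bigl|\,d_g(\pi x,\pi x') - d_{g^\mu}(x,x')\,\bigr| \le \varepsilon_\mu\) for all \(x,x' \in E\) — from which the asserted \GH\ convergence follows, the existence of such approximations with \(\varepsilon_\mu \to 0\) being equivalent to it. A submersion from a compact manifold is proper, hence a locally trivial fibre bundle by Ehresmann's theorem; in particular \(\pi(E)\) is open and closed, so \(\pi\) is surjective onto \(B\) (which we take to be connected), and the \(\varepsilon_\mu\)-density requirement is automatic. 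It thus remains to control the distance distortion of \(\pi\). Fix, once and for all, an auxiliary \rmm\ \(h\) on \(E\); then uniform convergence \(g^\mu \to \pi^*g\) furnishes constants \(\delta_\mu \to 0\) with \(-\delta_\mu h \le g^\mu - \pi^*g \le \delta_\mu h\) as quadratic forms on \(\T E\).

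The lower bound for \(d_{g^\mu}\) is read off immediately from the hypothesis \(g^\mu \ge \Lambda_\mu^2\,\pi^*g\): it gives \(|d\pi(v)|_g \le \Lambda_\mu^{-1}|v|_{g^\mu}\) for every \(v \in \T E\), so \(\pi\) contracts the \(g^\mu\)-length of any path by a factor at most \(\Lambda_\mu^{-1}\), whence \(d_g(\pi x,\pi x') \le \Lambda_\mu^{-1}\,d_{g^\mu}(x,x')\) once \(\mu\) is large enough that \(\Lambda_\mu > 0\). The upper bound is the crux, the difficulty being that \(\pi^*g\) degenerates along the fibres \(F_b := \pi^{-1}(b)\); I would establish it by a vertical-then-horizontal argument. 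Vertically: restricting \(g^\mu \le \pi^*g + \delta_\mu h\) to \(\T F_b\), where \(\pi^*g\) vanishes, yields \(g^\mu|_{F_b} \le \delta_\mu\,h|_{F_b}\), so \(F_b\) has intrinsic \(g^\mu\)-diameter at most \(\eta_\mu := \sqrt{\delta_\mu}\,D_0\), where \(D_0 := \sup_{b \in B}\diam(F_b,h|_{F_b}) < \0\) by compactness and local triviality; in particular \(d_{g^\mu}(x,x') \le \eta_\mu\) whenever \(\pi x = \pi x'\). Horizontally: fix a smooth horizontal complement \(\mc{H}\) to \(\Ker d\pi\) in \(\T E\) (an Ehresmann connection, whose parallel transport along paths is globally defined because the fibre is compact); the horizontal-lift bundle map \(\pi^*\T B \to \mc{H}\) is continuous, hence \(h\)-bounded in terms of the \(g\)-norm on \(B\) by some \(C_1 < \0\), again by compactness of \(E\). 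Given \(x,x'\), take a minimizing \(g\)-geodesic \(\ga\) from \(\pi x\) to \(\pi x'\), and let \(\tld\ga\) be its horizontal lift through \(x\), which ends in \(F_{\pi x'}\); then \(|\dot{\tld\ga}|_{g^\mu}^2 \le |\dot\ga|_{\pi^*g}^2 + \delta_\mu|\dot{\tld\ga}|_h^2 \le (1+\delta_\mu C_1^2)|\dot\ga|_g^2\), so \(\lth_{g^\mu}(\tld\ga) \le \sqrt{1+\delta_\mu C_1^2}\;d_g(\pi x,\pi x')\), and combining with the vertical bound by the triangle inequality gives
\ew
d_{g^\mu}(x,x') \le \sqrt{1+\delta_\mu C_1^2}\;\, d_g(\pi x,\pi x') + \eta_\mu.
\eew

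This upper bound already bounds \(\diam(E,g^\mu)\) uniformly in \(\mu \ge 1\) by some constant \(D < \0\); feeding this into the lower bound and combining the two estimates yields, for all \(x,x' \in E\),
\ew
\bigl|\,d_{g^\mu}(x,x') - d_g(\pi x,\pi x')\,\bigr| \le \varepsilon_\mu := \max\bigl\{\,(\sqrt{1+\delta_\mu C_1^2}-1)\diam(B,g) + \eta_\mu,\ \max\{0,\Lambda_\mu^{-1}-1\}\,D\,\bigr\},
\eew
and \(\varepsilon_\mu \to 0\) since \(\delta_\mu \to 0\) (so \(\eta_\mu \to 0\) and \(\sqrt{1+\delta_\mu C_1^2} \to 1\)) and \(\Lambda_\mu \to 1\). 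This exhibits \(\pi\) as the required \(\varepsilon_\mu\)-approximation, completing the argument. I expect the main obstacle to be precisely the upper distance bound — in particular, controlling \(d_{g^\mu}(x,x')\) when \(x\) and \(x'\) lie in distinct fibres — because the limiting form \(\pi^*g\) is merely a semi-metric on \(E\); the vertical/horizontal splitting above is the way around this, with compactness of \(E\) (equivalently, of the fibre bundle) entering twice, once to bound fibre diameters and once to bound the norm of horizontal lifting, while the \(\Lambda_\mu\)-hypothesis is what makes the lower bound work, the inequality \(g^\mu \ge \pi^*g - \delta_\mu h\) available from uniform convergence alone being useless there since \(\pi^*g - \delta_\mu h\) is indefinite.
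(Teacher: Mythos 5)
Your proof is correct, but it follows a genuinely different route from the paper: the paper does not prove Theorem \ref{Cvgce-Thm} at all, instead quoting it as a special case of the general collapsing theorem for stratified Riemannian (semi-)metrics on orbifolds from \cite{AGCRfOvSF}, whereas you give a self-contained argument exhibiting \(\pi\) itself as an \(\varepsilon_\mu\)-isometry. Your two key steps are sound: the lower distance bound is exactly what the hypothesis \(g^\mu \ge \Lambda_\mu^2\pi^*g\) is for, and the upper bound via the vertical/horizontal splitting works — converting uniform convergence into the two-sided quadratic-form bound \(-\delta_\mu h \le g^\mu - \pi^*g \le \delta_\mu h\) is precisely the pointwise comparison the paper itself invokes elsewhere (Lemma \ref{LC-vs-norm}), the fibre-diameter bound \(\sqrt{\delta_\mu}D_0\) is justified by compactness and local triviality, and the Ehresmann horizontal lift with the \(h\)-norm bound \(C_1\) handles points in distinct fibres, which is indeed the only delicate point since \(\pi^*g\) is merely a semi-metric. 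Two harmless caveats: your uniform bound \(D\) on \(\diam(E,g^\mu)\) over all \(\mu\ge1\) really only needs (and is only guaranteed) for \(\mu\) large, where \(\delta_\mu\le1\) and \(\Lambda_\mu>0\), which suffices for the limit; and surjectivity of \(\pi\) uses connectedness of \(B\), an implicit hypothesis of the statement which you correctly flag. What each approach buys: your direct proof makes the manifold case elementary and independent of \cite{AGCRfOvSF}, essentially reproducing the standard adiabatic-collapse estimate; the paper's citation is not laziness but necessity further on, since the proof of \tref{FFKM-CT2} requires the full stratified-orbifold version (singular fibration over \(\lt(\lqt{\bb{T}^2}{\{\pm1\}}\rt)\x S^1\), limit a \sqfs), for which the clean submersion statement you prove would not suffice — there the Conditions \ref{CT-Cond} play the role that your fibre-diameter and horizontal-lift estimates play here.
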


\begin{proof}[Proof of \tref{CT}]
By \lref{scaling-lem} applied to the forms \(\al g^1, \be g^2, g^3, \mu^6\om, \la\Om\) (so that \(\nu = \frac{\mu^6}{\lt(\la\ol{\la}\rt)^\frac{1}{2}}\)) one may compute that:
\ew
g_{\ph(\al,\be,\la;\mu)} = \frac{\mu^8\al^2}{\lt(\la\ol{\la}\rt)^\frac{2}{3}} \lt(g^1\rt)^{\ts2} + \mu^2\lt(\la\ol{\la}\rt)^\frac{1}{3}g_\om + \frac{\lt(\la\ol{\la}\rt)^\frac{1}{3}}{\mu^4}\lt[\be^2\lt(g^2\rt)^{\ts2} + \lt(g^3\rt)^{\ts2}\rt].
\eew
Rescaling the \g\ 3-forms \(\ph(\al,\be,\la;\mu) \mt \mu^{-12}\ph(\al,\be,\la;\mu)\), one finds that:
\ew
g_{\mu^{-12}\ph(\al,\be,\la;\mu)} &= \frac{\al^2}{\lt(\la\ol{\la}\rt)^\frac{2}{3}} \lt(g^1\rt)^{\ts2} + \frac{\lt(\la\ol{\la}\rt)^\frac{1}{3}}{\mu^6}g_\om + \frac{\lt(\la\ol{\la}\rt)^\frac{1}{3}}{\mu^{12}}\lt[\be^2\lt(g^2\rt)^{\ts2} + \lt(g^3\rt)^{\ts2}\rt]\\
&\to \al^2\lt(\la\ol{\la}\rt)^{-\frac{2}{3}} \lt(g^1\rt)^{\ts2} = \fr{p}^* \lt[\al^2\lt(\la\ol{\la}\rt)^{-\frac{2}{3}} g_\E \rt] \text{ uniformly as } \mu \to \infty,
\eew
where \(g_\E\) denotes the Euclidean metric on \(\rqt{\bb{R}}{\ell\bb{Z}}\).  Moreover:
\ew
g_{\mu^{-12}\ph(\al,\be,\la;\mu)} \ge \fr{p}^* \lt[\al^2\lt(\la\ol{\la}\rt)^{-\frac{2}{3}} g_\E \rt] \hs{3mm} \text{for all } \mu.
\eew
The result now follows at once from \tref{Cvgce-Thm}.

\end{proof}

\section{The unboundedness above of \(\CH\) on \(\lt(\ca{\M},\ca{\ph}\rt)\)}\label{FFKM}

\subsection{Preliminaries: basic properties of orbifolds}

The material in this subsection is largely based on \cite[\S\S1.1--1.3]{O&ST} and \cite[\S14.1]{THKLFPFftScDO}.  Let \(E\) be a topological space.  
\begin{Defn}
An \(n\)-dimensional orbifold chart \(\Xi\) is the data of a connected, open neighbourhood \(U\) in \(E\), a finite subgroup \(\Ga \pc \GL(n;\bb{R})\), a connected, \(\Ga\)-invariant open neighbourhood \(\tld{U}\) of \(0 \in \bb{R}^n\) and a homeomorphism \(\ch: \lqt{\tld{U}}{\Ga} \to U\).  Write \(\tld{\ch}\) for the composite \(\tld{U} \oto{quot} \lqt{\tld{U}}{\Ga} \oto{\ch} U\).  Say that \(\Xi\) is centred at \(e \in E\) if \(e = \tld{\ch}(0)\).   In this case, \(\Ga\) is called the orbifold group of \(e\), denoted \(\Ga_e\).  \(e\) is called a smooth point if \(\Ga_e = 0\), and a singular point if \(\Ga_e \ne 0\).

Now consider two orbifold charts \(\Xi_1 = \lt(U_1,\Ga_1,\tld{U}_1,\ch_1\rt)\) and \(\Xi_2 = \lt(U_2,\Ga_2,\tld{U}_2,\ch_2\rt)\) with \(U_1 \cc U_2\).   An embedding of \(\Xi_1\) into \(\Xi_2\) is the data of a smooth, open embedding \(\io_{12}:\tld{U}_1 \emb \tld{U}_2\) and a group isomorphism \(\la_{12}: \Ga_1 \to \Stab_{\Ga_2}(\io_{12}(0))\) such that for all \(x \in \tld{U}_1\) and all \(\si \in \Ga_1\): \(\io_{12}(\si \cdot x) = \la_{12}(\si) \cdot \io_{12}(x)\), and such that the following diagram commutes:
\ew
\bcd[column sep = 13mm, row sep = 3mm]
\tld{U}_1 \ar[rr, ^^22 \mla{\io_{12}} ^^22] \ar[dd, ^^22 \mla{\tld{\ch}_1} ^^22] & & \tld{U}_2 \ar[dd, ^^22 \mla{\tld{\ch}_2} ^^22]\\
& & \\
U_1 \ar[rr, ^^22\mla{incl}^^22, hook] &  & U_2
\ecd
\eew

Now let \(\Xi_1\) and \(\Xi_2\) be arbitrary.  \(\Xi_1\) and \(\Xi_2\) are compatible, if for every \(e \in U_1 \cap U_2\) there exists a chart \(\Xi_e = \lt(U_e,\Ga_e,\tld{U}_e,\ch_e\rt)\) centred at \(e\) together with embeddings \((\io_{e1},\la_{e1}): \Xi_e \emb \Xi_1\) and \((\io_{e2},\la_{e2}): \Xi_e \emb \Xi_2\).  If \(U_1 \cap U_2 = \es\), then \(\Xi_1\) and \(\Xi_2\) are automatically compatible, however if \(U_1 \cap U_2 \ne \es\) and \(\Xi_1\) and \(\Xi_2\) are compatible, then \(\Xi_1\) and \(\Xi_2\) have the same dimension; moreover, if \(\Xi_1\) and \(\Xi_2\) are centred at the same point \(e \in E\), then \(\Ga_1 \cong \Ga_2\) and therefore the orbifold group \(\Ga_e\) is well-defined up to isomorphism.

An orbifold atlas for \(E\) is a collection of compatible orbifold charts \(\fr{A}\) which is maximal in the sense that if a chart \(\Xi\) is compatible with every chart in \(\fr{A}\), then \(\Xi \in \fr{A}\).  An orbifold is a connected, Hausdorff, second-countable topological space \(E\) equipped with an orbifold atlas \(\fr{A}\).  Every chart of \(E\) has the same dimension \(n\); call this the dimension of the orbifold.
\end{Defn}

Let \(E\) be an \(n\)-orbifold.  Given any chart \(\Xi = \lt(U, \Ga, \tld{U}, \ch\rt)\) for \(E\), the action of \(\Ga\) on \(\tld{U}\) naturally lifts to an action of \(\Ga\) on \(\T\tld{U}\) by bundle automorphisms.  Given a second chart \(\Xi' = (U',\Ga',\tld{U}',\ch')\) embedding into \(\Xi\), the map \(\tld{U}' \emb \tld{U}\) induces an equivariant embedding of bundles \(\T\tld{U}' \emb \T\tld{U}\).  Define:
\ew
\T E = \rqt{\lt[\coprod_\Xi \lt(\lqt{\T\tld{U}}{\Ga}\rt)\rt]}{\tl}
\eew
where the quotient by \(\tl\) denotes that one should glue along the embeddings \(\T\tld{U}' \emb \T\tld{U}\).  The resulting space \(\T E\) is an orbifold, and the natural projection \(\pi: \T E \to E\) gives \(\T E\) the structure of an orbifold vector bundle over \(E\) in the sense of \cite[p.\ 173]{THKLFPFftScDO}.  \(\T E\) is termed the tangent bundle of \(E\).  Given \(e \in E\), the tangent space at \(e\), denoted \(\T_eE\), is the preimage of \(e\) under the map \(\T E \to E\).  It may be identified with the quotient space \(\lqt{\bb{R}^n}{\Ga_e}\), where \(\Ga_e\) is the orbifold group at \(e\).  A section of the tangent bundle is then simply a continuous map \(X: E \to \T E\) such that, for each local chart \(\Xi = \lt(U, \Ga, \tld{U}, \ch\rt)\) for \(E\), there is a smooth, \(\Ga\)-invariant, vector field \(\tld{X}: \tld{U} \to \T \tld{U}\) such that the following diagram commutes:
\ew
\bcd[column sep = 13mm, row sep = 3mm]
\tld{U} \ar[rr, ^^22 \mla{\tld{X}} ^^22] \ar[dd, ^^22 \mla{\tld{\ch}} ^^22] & & \T \tld{U} \ar[dd]\\
& &\\
U \ar[rr, ^^22 \mla{X} ^^22] &  & \pi^{-1}(U)
\ecd
\eew
Note, in particular, that \(\tld{X}|_0\) is invariant under the action of \(\Ga_e\); thus, given a 1-form \(\al \in \T^*_e E \cong \lqt{\bb{R}^n}{\Ga_e}\), the evaluation \(\al\lt(\tld{X}|_0\rt)\) is well-defined.  In a similar way, one may define the cotangent bundle of an orbifold, tensor bundles, bundles of exterior forms etc., with sections of these bundles defined analogously.  \(E\) is orientable if \(\ww{n}\T^* E\) is trivial, i.e.\ has a non-vanishing section (in particular, all the orbifold groups of \(E\) lie in \(\GL_+(n;\bb{R})\)); an orientation is simply a choice of trivialisation.  A \g\ 3-form on an oriented orbifold \(E\) is a section of \(\ww{3}\T^* E\) such that each local representation \(\tld{\ph}\) of \(\ph\) is a \g\ 3-form in the classical (manifold) sense.  In particular, the orbifold group \(\Ga_e\) at each point lies in the stabiliser of \(\tld{\ph}|_0\) and thus is isomorphic to a subgroup of \g.  Riemannian metrics and other geometric structures can be defined similarly.  Given a closed (orbifold) \g\ 3-form \(\ph\), the functional \(\CH: [\ph]_+ \to (0,\0)\) is defined as in the manifold case, by integrating the volume form induced by \(\ph\) over all of \(\M\); see \cite[\S2.1]{O&ST} for further details on de Rham cohomology and integration of forms on orbifolds.

\subsection{The construction of \(\lt(\ca{\M},\ca{\ph}\rt)\)}\label{FFKM-constr}

For full details of the construction, see \cite{ACG2CMwFBNb1eq1}.

Let \(G = \lt\{ \bpm A_1 & 0\\ 0 & A_2 \epm \in \SL(12;\bb{R}) \rt\}\), where, for \((x^1,..,x^7)\in\bb{R}^7\):
\ew
A_1 =
\bpm
1 & -x^2 & x^1 & x^4 & -x^1x^2 & x^6\\
0 & 1 & 0 & -x^1 & x^1 & \frac{1}{2}(x^1)^2\\
0 & 0 & 1 & 0 & -x^2 & -x^4\\
0 & 0 & 0 & 1 & 0 & 0\\
0 & 0 & 0 & 0 & 1 & x^1\\
0 & 0 & 0 & 0 & 0 & 1
\epm
\et
A_2 =
\bpm
1 & -x^3 & x^1 & x^5 & -x^1x^3 & x^7\\
0 & 1 & 0 & -x^1 & x^1 & \frac{1}{2}(x^1)^2\\
0 & 0 & 1 & 0 & -x^3 & -x^5\\
0 & 0 & 0 & 1 & 0 & 0\\
0 & 0 & 0 & 0 & 1 & x^1\\
0 & 0 & 0 & 0 & 0 & 1
\epm.
\eew
Write \(\Ga\pc G\) for the uniform subgroup corresponding to \((x^1,...,x^7)\in 2\bb{Z}\x\bb{Z}^6\) and define \(\M = \lqt{G}{\Ga}\), a closed nilmanifold.  \(G\) admits a basis of left-invariant 1-forms given by:
\cag\label{FFKM1forms}
\th^1 = \dd x^1, \hs{5mm} \th^2 = \dd x^2, \hs{5mm} \th^3 = \dd x^3, \hs{5mm} \th^4 = \dd x^4 - x^2 \dd x^1\\
\th^5 = d x^5 - x^3\dd x^1, \hs{5mm} \th^6 = \dd x^6 + x^1\dd x^4, \hs{5mm} \th^7 = \dd x^7 + x^1\dd x^5
\caag
which descend to define a basis of 1-forms on \(\M\) (also denoted \(\th^i\)) satisfying:
\e\label{FFKMform}
\dd\th^i = 0 \ (i = 1,2,3), \hs{5mm} \dd\th^4 = \th^{12}, \hs{5mm} \dd\th^5 = \th^{13}, \hs{5mm} \dd\th^6 = \th^{14} \et \dd\th^7 = \th^{15}.
\ee
Define a closed \g\ 3-form on \(\M\) by:
\ew
\vph = \th^{123} + \th^{145} + \th^{167} - \th^{246} + \th^{257} + \th^{347} + \th^{356}.
\eew
(N.B.\ a \g-basis for \(\vph\) is given by \(\lt(\th^1,-\th^2,-\th^3,\th^4,\th^5,\th^6,\th^7\rt)\).)  Then \(\M\) admits a (non-free) involution \(\mc{I}\) given by:
\e\label{FFKMInv}
\mc{I}: \Ga\cdot(x^1,x^2,x^3,x^4,x^5,x^6,x^7) \mt \Ga\cdot(-x^1,-x^2,x^3,x^4,-x^5,-x^6,x^7)
\ee
which preserves \(\vph\) and hence \(\vph\) descends to define a closed (orbifold) \g\ 3-form \(\h{\vph}\) on \(\h{\M} = \lqt{\M}{\mc{I}}\).

Let \(\h{S}\) denote the singular locus of \(\h{\M}\) and write \(S\) for the preimage of \(\h{S}\) under the natural projection \(\M \to \h{\M}\). By \eref{FFKMInv} (see \S5 of the arXiv version of \cite{ACG2CMwFBNb1eq1}; the journal version contains an error in this regard) \(S = \coprod_{{\bf a} \in \fr{A}} S_{\bf a}\) where \({\bf a} = \lt(a^1,a^2,a^5,a^6\rt) \in \fr{A} = \{0,1\}\x\lt\{0,\tfrac{1}{2}\rt\}^3\) and:
\ew
S_{\bf a} =
\begin{dcases*}
\lt\{\Ga \cdot \lt(0,a^2,x^3,x^4,a^5,a^6,x^7\rt)~\m|~ x^3,x^4,x^7 \in \bb{R}\rt\} & if \(a^1 = 0\)\\
\lt\{\Ga \cdot \lt(1,a^2,x^3,x^4,a^5,\tfrac{3}{2}a^2 + a^6 - x^4,x^7\rt)~\middle|~ x^3,x^4,x^7 \in \bb{R}\rt\} & if \(a^1 = 1\).
\end{dcases*}
\eew
Similarly, write \(\h{S} = \coprod_{{\bf a} \in \fr{A}} \h{S}_{\bf a}\) where each \(\h{S}_{\bf a}\) is the image of \(S_{\bf a}\) under the projection \(\M \to \h{\M}\).  The map:
\e\label{Ph0}
\Ph_{\bf 0}: \bb{T}^3 \x \B^4_\ep &\to \M\\
\lt[\lt(y^3,y^4,y^7\rt) + \bb{Z}^3,\lt(y^1,y^2,y^5,y^6\rt)\rt] &\mt \Ga \cdot \lt(y^1,y^2,y^3,y^4,y^5 + y^1y^3, y^6, y^7 - \tfrac{1}{2}\lt(y^1\rt)^2y^3\rt)
\ee
defines an embedding onto an open neighbourhood of \(S_{\bf 0}\) identifying \(\bb{T}^3\) with \(S_{\bf 0}\) and \(\mc{I}\) with \(\Id_{\bb{T}^3} \x -\Id_{\B^4_\ep}\) for \(\ep>0\) sufficiently small.  Similarly, for each \({\bf a} = \lt(0,a^2,a^5,a^6\rt)\), there is an embedding \(\Ph_{\bf a}: \bb{T}^3 \x \B^4_\ep \to \M\) given by \(\Ph_{\bf a} = f_{\bf a} \circ \Ph_{\bf 0}\), where \(f_{\bf a}\) is a diffeomorphism of \(\M\) induced by a left-translation of \(G\), commuting with \(\mc{I}\) and mapping \(S_{\bf 0}\) to \(S_{\bf a}\).  For the other components of the singular locus, define a lattice \(\La = \bb{Z}\cdot \bpm 1\\ 0\\ -\frac{1}{2} \epm + \bb{Z} \cdot \bpm 0\\ 1\\ 0 \epm + \bb{Z} \cdot \bpm 0\\ 0\\ 1 \epm \pc \bb{R}^3\) and write \(\tld{\bb{T}^3} = \rqt{\bb{R}^3}{\La}\).  Writing \({\bf 1} = (1,0,0,0)\), the map:
\e\label{Ph1}
\Ph_{\bf 1} : \tld{\bb{T}^3} \x \B^4_\ep &\to \M\\
\lt[\lt(y^3,y^4,y^7\rt) + \La,\lt(y^1,y^2,y^5,y^6\rt)\rt] &\mt \Ga \cdot \lt(y^1 + 1,y^2,y^3,y^4,y^5 + y^1y^3, y^6 - y^4, y^7 - y^5 - y^1y^3 - \tfrac{1}{2}\lt(y^1\rt)^2y^3\rt)
\ee
defines an embedding onto an open neighbourhood of \(S_{\bf 1}\) for \(\ep>0\) sufficiently small identifying \(\tld{\bb{T}^3}\) with \(S_{\bf 1}\) and \(\mc{I}\) with \(\Id_{\tld{\bb{T}^3}} \x -\Id_{\B^4_\ep}\).  Similarly, for each \({\bf a} = \lt(1,a^2,a^5,a^6\rt)\), there is an embedding \(\Ph_{\bf a}: \tld{\bb{T}^3} \x \B^4_\ep \to \M\) given by \(\Ph_{\bf a} = g_{\bf a} \circ \Ph_{\bf 1}\), where \(g_{\bf a}\) is a diffeomorphism of \(\M\) induced by a left-translation of \(G\), commuting with \(\mc{I}\) and mapping \(S_{\bf 1}\) to \(S_{\bf a}\).

Let \(T\) denote either \(\bb{T}^3\) or \(\tld{\bb{T}^3}\), as appropriate, and write \(\h{\Ph}_{\bf a}\) for the map \(T \x \lt(\lqt{\B^4_\ep}{\{\pm1\}}\rt) \to \h{\M}\) induced by \(\Ph_{\bf a}\), i.e.\ for the bottom map in the commutative diagram:
\ew
\bcd
T \x \B^4_\ep \ar[r, "\Ph_{\bf a}"] \ar[d, "quot"] & \M \ar[d, "quot"]\\
T \x \lt(\lqt{\B^4_\ep}{\{\pm1\}}\rt) \ar[r, "\h{\Ph}_{\bf a}"] & \h{\M}
\ecd
\eew
For each \({\bf a} = \lt(a^1, a^2,a^5,a^6\rt) \in \fr{A}\), define \(U_{\bf a} = \Ph_{\bf a}\lt(T \x \B^4_\ep\rt)\) and \(\h{U}_{\bf a} = \h{\Ph}_{\bf a}\lt[T \x \lt(\lqt{\B^4_\ep}{\{\pm1\}}\rt)\rt]\); \wlg\ one can assume that the \(U_{\bf a}\) are disjoint.  Then, shrinking \(\ep > 0\) still further if necessary, there exists a closed orbifold \g\ 3-form \(\h{\ph}\) on \(\h{\M}\) such that \(\h{\ph} = \h{\vph}\) on \(\lt.\h{\M}\m\osr\coprod_{{\bf a}\in\fr{A}} \h{U}_{\bf a}\rt.\) and on each \(\h{W}_{\bf a} = \h{\Ph}_{\bf a}\lt[T \x \lt(\lqt{\B^4_{\ep/2}}{\{\pm1\}}\rt)\rt]\) one has:
\ew
\h{\ph} = \dd y^{123} + \dd y^{145} + \dd y^{167} - \dd y^{246} + \dd y^{257} + \dd y^{347} + \dd y^{356},
\eew
where the \(y^i\) are defined in \erefs{Ph0} and \eqref{Ph1}, and the associated discussion.  Identify \(\lqt{\B^4_{\ep/2}}{\{\pm1\}}\ \pc \lqt{\bb{C}^2}{\{\pm1\}}\) by writing \(w^1 = y^1 + i y^2\) and \(w^2 = y^5 + i y^6\) and define:
\ew
\h{\om} = \frac{i}{2}\lt(\dd w^1 \w \dd \ol{w}^1 + \dd w^2 \w \dd \ol{w}^2\rt) \et \h{\Om} = \dd w^1 \w \dd w^2.
\eew
Then on \(\h{W}_{\bf a}\), one has:
\ew
\h{\ph} = \dd y^{347} + \dd y^3 \w \om - \dd y^4 \fr{Re}\Om + \dd y^7 \fr{Im}\Om.
\eew
Now recall the space \cite[\S2]{ADLatCEHS}:
\ew
\tld{X} = \OP{1}{-2} = \lt\{\lt(\lt(U^1,U^2\rt),\lt[W^1:W^2\rt]\rt) \in \bb{C}^2 \x \bb{CP}^1~\m|~U^1\lt(W^2\rt)^2 = U^2\lt(W^1\rt)^2\rt\} \cong\T^*\bb{CP}^1,
\eew
together with the continuous (non-smooth) blow-up map \(\rh:\OP{1}{-2}\to\lqt{\bb{C}^2}{\{\pm1\}}\) given by:
\ew
\lt(\lt(U^1,U^2\rt),\lt[W^1:W^2\rt]\rt) \mt \pm\lt(\sqrt{U^1},\sqrt{U^2}\rt),
\eew
where the square-roots on the right-hand side are constrained only by the condition \(\sqrt{U^1}W^2 = \sqrt{U^2}W^1\), and write \(\fr{E} = \rh^{-1}(\{0\})\) for the exceptional divisor.  Using the map \(\rh\), identify the spaces \(\lt.\tld{X}\m\osr\fr{E}\rt.\) and \(\lt(\lqt{\bb{C}^2}{\{\pm1\}}\rt)\osr\{0\}\).  It can be shown that the form \(\h{\Om}\) on \(\lt.\tld{X}\m\osr\fr{E}\rt.\) extends over all of \(\tld{X}\) to define a smooth, closed, non-zero \((2,0)\)-form \(\tld{\Om}\), however the form \(\h{\om}\) on \(\lt.\tld{X}\m\osr\fr{E}\rt.\) cannot be extended over \(\fr{E}\).  Instead, one considers the so-called \EH\ metrics \(\tld{\om}_t\) on \(\tld{X}\) defined as follows: let \(r^2 = \lt|w^1\rt|^2 + \lt|w^2\rt|^2\) denote the distance squared from the origin in \(\lqt{\bb{C}^2}{\{\pm1\}}\) and define:
\e\label{EH-defn}
\tld{\om}_t = \frac{1}{4}\dd\dd^c\lt[\sqrt{r^4 + t^4} + t^2\log\lt(\frac{r^2}{\sqrt{r^4 + t^4} + t^2}\rt)\rt] \text{ on } \lt(\lqt{\bb{C}^2}{\{\pm1\}}\rt)\osr\{0\},
\ee
where \(\dd^c = i\lt(\ol{\del} - \del\rt)\) as usual.  Then \(\tld{\om}_t\) can be extended smoothly over \(\fr{E}\) to define a Ricci-flat K\^^22{a}hler form on \(\tld{X}\) \cite[p.\ 160]{CMwSH}.  The forms \(\tld{\om}_t\) can be used to `extend' \(\h{\om}\) over the exceptional divisor in the following sense: for \(\ep>0\), write \(\tld{X}_\ep\) for the pre-image of \(\lqt{\B^4_\ep}{\{\pm1\}}\) under the map \(\rh:\tld{X} \to \lqt{\bb{C}^2}{\{\pm1\}}\).  Then for every \(\ep > 0\), there exists \(t\) sufficiently small (depending on \(\ep\)) and a K\^^22{a}hler form \(\ca{\om}_t\) on \(\tld{X}\) such that:
\e\label{EH-interp}
\ca{\om}_t = \h{\om} \text{ on a neighbourhood of } \lt.\tld{X}\m\osr\tld{X}_{\frac{1}{2}\ep}\rt. \et \ca{\om}_t = \tld{\om}_t \text{ on } \tld{X}_{\frac{1}{4}\ep}.
\ee
Now define a new manifold \(\ca{\M}\) by:
\ew
\ca{\M} = \lt(\h{\M}\m\osr\coprod_{{\bf a}\in\fr{A}}\h{W}_{\bf a}\rt) \underset{\tl}{\bigcup} \lt(\coprod_{{\bf a}\in\fr{A}} T \x \tld{X}_\ep\rt)
\eew
where \(\underset{\tl}{\bigcup}\) denotes that, for each \({\bf a} \in \fr{A}\), the region \(\lt.\h{U}_{\bf a}\m\osr \h{W}_{\bf a}\rt.\) should be identified with the region \(T \x \lt(\tld{X}_\ep\m\osr\tld{X}_{\frac{1}{2}\ep}\rt) \cong T \x \lt.\lt(\lqt{\B^4_\ep}{\{\pm1\}}\rt)\m\osr\lt(\lqt{\B^4_{\frac{1}{2}\ep}}{\{\pm1\}}\rt)\rt.\) using \(\h{\Ph}_{\bf a}\).  Denote the image of \(T \x \tld{X}_\ep\) in \(\ca{\M}\) corresponding to \({\bf a} \in \fr{A}\) by \(\ca{U}_{\bf a}\) and the image of \(T \x \tld{X}_{\frac{1}{2}\ep}\) by \(\ca{W}_{\bf a}\).  Define a 3-form \(\ca{\ph}\) on \(\ca{\M}\) by setting \(\ca{\ph} = \h{\ph}\) on \(\lt(\h{\M}\m\osr\coprod_{{\bf a}\in\fr{A}}\h{W}_{\bf a}\rt)\) and setting:
\ew
\ca{\ph} = \dd y^{347} + \dd y^3 \w \ca{\om}_t - \dd y^4 \w \fr{Re}\tld{\Om} + \dd y^7 \w \fr{Im}\tld{\Om}
\eew
on \(\ca{W}_{\bf a}\) for each \({\bf a}\).  \(\ca{\ph}\) is smooth and well-defined by \eref{EH-interp}.  This yields:

\begin{Thm}[{\cite[Thm.\ 21]{ACG2CMwFBNb1eq1}}]\label{FFKMconstruction}
Let \(\rh:\ca{\M}\to\h{\M}\) denote the `blow-down' map. Then there exists a smooth, closed \g\ 3-form \(\ca{\ph}\) on \(\ca{\M}\) such that \(\rh_*\ca{\ph} = \h{\ph}\) outside a neighbourhood of the singular locus \(S\).
\end{Thm}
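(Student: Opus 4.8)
Since the \(3\)-form \(\ca{\ph}\) has already been written down explicitly just before the statement, the proof is a verification: one must check that \(\ca{\ph}\) is (i) a well-defined smooth \(3\)-form on \(\ca{\M}\), (ii) closed, (iii) pointwise of \g-type, and (iv) satisfies \(\rh_*\ca{\ph}=\hat{\ph}\) outside a neighbourhood of (the image of) the singular locus \(S\). I would treat (i) carefully and then dispatch (ii)--(iv) by short local computations on the two regions \(\hat{\M}\osr\coprod_{\bf a}\ca{W}_{\bf a}\) and \(\coprod_{\bf a}\ca{W}_{\bf a}\) making up \(\ca{\M}\).

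For (i): on \(\hat{\M}\osr\coprod_{\bf a}\ca{W}_{\bf a}\) one has \(\ca{\ph}=\hat{\ph}\), which is smooth; on each \(\ca{W}_{\bf a}\), sitting inside a copy of \(T\x\tld{X}_\ep\), the form \(\dd y^{347}+\dd y^3\w\ca{\om}_t-\dd y^4\w\fr{Re}\tld{\Om}+\dd y^7\w\fr{Im}\tld{\Om}\) is a combination of smooth forms, using that \(\ca{\om}_t\) and \(\tld{\Om}\) extend smoothly over the exceptional divisor \(\fr{E}\) (the former by the \EH\ construction \eref{EH-defn}, the latter being the extension of \(\hat{\Om}=\dd w^1\w\dd w^2\)) and that \(\dd y^3,\dd y^4,\dd y^7\) are pulled back from the smooth factor \(T\). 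It remains to check that the two prescriptions agree smoothly along the hypersurface where they are glued: a neighbourhood of this hypersurface lies in the annulus \(T\x(\tld{X}_\ep\osr\tld{X}_{\frac12\ep})\) and hence away from \(\fr{E}\), so the coordinates \(y^i\) (equivalently \(w^1=y^1+iy^2\), \(w^2=y^5+iy^6\)) are defined there, and on it \(\ca{\om}_t=\hat{\om}\) by \eref{EH-interp}, \(\tld{\Om}=\hat{\Om}\) since \(\tld{\Om}\) extends \(\hat{\Om}\), and \(\hat{\ph}=\dd y^{347}+\dd y^3\w\hat{\om}-\dd y^4\w\fr{Re}\hat{\Om}+\dd y^7\w\fr{Im}\hat{\Om}\) by the construction of \(\hat{\ph}\) recalled above; the three identities together show that the two prescriptions coincide near the hypersurface, so \(\ca{\ph}\) is a well-defined smooth \(3\)-form.

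For (ii) and (iii): on \(\hat{\M}\osr\coprod_{\bf a}\ca{W}_{\bf a}\), \(\ca{\ph}=\hat{\ph}\) is closed and of \g-type. On each \(\ca{W}_{\bf a}\), sitting in a copy of \(T\x\tld{X}_\ep\), the \(1\)-forms \(\dd y^3,\dd y^4,\dd y^7\) on \(T\) are closed, \(\ca{\om}_t\) is closed (being K\"ahler), and \(\tld{\Om}\) is closed by construction, so \(\dd\ca{\ph}=-\dd y^3\w\dd\ca{\om}_t+\dd y^4\w\dd\fr{Re}\tld{\Om}-\dd y^7\w\dd\fr{Im}\tld{\Om}=0\); moreover \(\ca{\ph}\) is of \g-type there by \lref{scaling-lem}(2), applied pointwise with \((g^1,g^2,g^3)=(\dd y^3,\dd y^4,\dd y^7)\) trivialising the rank-\(3\) factor and the natural complex structure of \(\tld{X}=\OP{1}{-2}\) on the rank-\(4\) factor \(\T\tld{X}\), noting that \(\ca{\om}_t\) is a positive real \((1,1)\)-form (being K\"ahler) and \(\tld{\Om}\) a nowhere-vanishing \((2,0)\)-form (it extends \(\dd w^1\w\dd w^2\)). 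Since \(\ca{\ph}\) is pointwise of \g-type it induces a \g-structure --- and hence an orientation --- on \(\ca{\M}\), consistent with the orientation of \(\hat{\M}\osr\coprod_{\bf a}\hat{W}_{\bf a}\) because both restrict to the standard one on the annulus. Point (iv) is then immediate: away from \(\coprod_{\bf a}\ca{W}_{\bf a}\) the blow-down \(\rh\) restricts to a diffeomorphism onto \(\hat{\M}\osr\coprod_{\bf a}\hat{W}_{\bf a}\) and \(\ca{\ph}=\hat{\ph}\) there.

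The principal obstacle is (i), and buried inside it the real content of \eref{EH-interp}: one must be able to interpolate between the flat form \(\hat{\om}\) (valid away from \(\fr{E}\)) and an \EH\ metric \(\tld{\om}_t\) (smooth across \(\fr{E}\)) \emph{through K\"ahler forms}, since it is exactly \(\dd\ca{\om}_t=0\) together with the positivity of \(\ca{\om}_t\) as a \((1,1)\)-form that make steps (ii) and (iii) go through on \(\ca{W}_{\bf a}\). This is the analytic heart of the construction of \cite{ACG2CMwFBNb1eq1}, in the spirit of Joyce's Kummer-type desingularisation \cite{CMwSH}; granting \eref{EH-interp}, the remaining work is the short local computations above together with the bookkeeping needed to see that the maps \(\Ph_{\bf a}\), \(\hat{\Ph}_{\bf a}\), the blow-down \(\rh\), and the gluing identifications are mutually compatible on overlaps.
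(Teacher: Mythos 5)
Your proposal is correct and follows essentially the same route as the paper, which gives no independent proof of this statement: it recounts the gluing construction of \(\lt(\ca{\M},\ca{\ph}\rt)\) in \sref{FFKM-constr}, observes that smoothness and well-definedness follow from the interpolation property \eref{EH-interp} while closedness, the \g-condition and \(\rh_*\ca{\ph}=\h{\ph}\) away from \(S\) are local verifications (via \lref{scaling-lem}), and defers the analytic input to \cite{ACG2CMwFBNb1eq1} (with a refined construction of \(\ca{\om}_t\) appearing later in \pref{tld-om-const}). Your identification of \eref{EH-interp} as the genuine content, with the rest being the verification you sketch, matches the paper's treatment.
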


\begin{Rk}\label{Rk-on-Coh}
It is well-known that \(\tld{\om}_t\) defines a non-zero cohomology class on \(\tld{X}\) which depends on \(t\).  Using \cite[Prop.\ 22]{ACG2CMwFBNb1eq1}, it follows that the cohomology class of \(\ca{\ph}\) also depends on the choice of \(t\) and hence on \(\ep\).  To prove the unboundedness of \(\CH\) on \(\lt(\ca{\M},\ca{\ph}\rt)\), I construct a family of closed \g\ 3-forms \(\ca{\ph}^\mu\) with unbounded volume in the fixed cohomology class \(\lt[\ca{\ph}\rt]\); thus they must all have the same `choice of \(\ep\)'.  This is an important technical subtlety in the construction of the forms \(\ca{\ph}^\mu\).
\end{Rk}

\subsection{The unboundedness of \(\CH\)}\label{FFKMUBpf}

I now prove \tref{FFKM-UA}.  Recall the statement of the theorem:\vs{3mm}

\noindent{\bf Theorem \ref{FFKM-UA}.}
\em The Hitchin functional:
\ew
\CH:\lt[\ca{\ph}\rt]_+ \to (0,\infty)
\eew
is unbounded above.\vs{2mm}\em

\(\lt(\ca{\M},\ca{\ph}\rt)\) does not satisfy the hypotheses of \pref{UB-Md-Prop}, however the manifold \((\M,\vph)\) does satisfies the hypotheses (specifically, the hypotheses of \pref{UB-Md-Prop}(1)) since, by \eref{FFKMform}, the 3-form \(\th^{123} = \dd\lt(\th^{25}\rt)\) is exact.  Thus, by \pref{UB-Md-Prop}(1), for each \(\mu \ge 1\), the 3-form:
\ew
\vph^\mu = \mu^6\th^{123} + \th^{145} + \th^{167} - \th^{246} + \th^{257} + \th^{347} + \th^{356}
\eew
is of \g-type and satisfies \(vol_{\vph^\mu} = \mu^2vol_\vph\).  By \eref{FFKMInv}, both the 3-form \(\th^{123}\) and the 2-form \(\th^{25}\) are \(\mc{I}\)-invariant and thus descend to the orbifold \(\h{\M}\).  Hence the forms \(\vph^\mu\) descend to define closed \g\ 3-forms \(\h{\vph}^\mu\) on \(\h{\M}\) with unbounded volume, which lie in the fixed cohomology class \(\lt[\h{\vph}\rt]\).

To complete the proof of \tref{FFKM-UA}, therefore, it suffices to `resolve the singularities' of \(\lt(\h{\M},\h{\vph}^\mu\rt)\).  The obvious approach is to mimic the construction of \(\lt(\ca{\M},\ca{\ph}\rt)\), by first deforming \(\h{\vph}^\mu\) into the form:
\ew
\h{\xi}^\mu = \mu^6\dd y^{123} + \dd y^{145} + \dd y^{167} - \dd y^{246} + \dd y^{257} + \dd y^{347} + \dd y^{356}
\eew
in a neighbourhood of the singular locus, and then resolving the singularity in \(\h{\xi}\) using \(\ca{\om}_t\) as above.  However this approach fails: in order to deform \(\h{\vph}^\mu\) into \(\h{\xi}^\mu\) on the region \(\h{U}_{\bf a} \cong T \x \lt(\lqt{\B^4_\ep}{\{\pm1\}}\rt)\), it is necessary for \(\ep\) to depend on \(\mu\).  This implies that the cohomology class of the resolved 3-form \(\ca{\ph}^\mu\) also depends on \(\mu\) (see \rref{Rk-on-Coh}) and thus this construction fails to demonstrate the unboundedness of the Hitchin functional \(\CH\) on the fixed cohomology class \(\lt[\ca{\ph}\rt]\).  Thus, instead, I deform \(\h{\vph}^\mu\) into the form:
\ew
\h{\xi}^\mu + y^1\dd y^{147} = \mu^6\dd y^{123} + \dd y^{145} + \dd y^{167} - \dd y^{246} + \dd y^{257} + \dd y^{347} + \dd y^{356} + y^1\dd y^{147}
\eew
near the singular locus.  This deformation can be performed on \(\h{U}_{\bf a} \cong T \x \lt(\lqt{\B^4_\ep}{\{\pm1\}}\rt)\) with \(\ep\) chosen independently of \(\mu\).  The additional term \(y^1 \dd y^{147}\) persists during the resolution of singularities, before being cut-off near the exceptional divisor, at some distance from the exceptional divisor depending on \(\mu\).  This enables the resolved 3-forms \(\ca{\ph}^\mu\) to lie in a fixed cohomology class, completing the proof of \tref{FFKM-UA}.

\begin{Rk}
The reader will recall that Joyce \cite{CR7MwHG2I,CR7MwHG2II,CMwSH} constructed numerous \g-manifolds by resolving the singularities in finite quotients of the torus \(\lt(\bb{T}^7,\ph_0\rt)\).  Despite the similarities between Joyce's construction and the construction of \(\lt(\ca{\M},\ca{\ph}\rt)\), the results of this paper do not apply to Joyce's manifolds since, unlike \((\M,\vph)\), the torus \(\lt(\bb{T}^7,\ph_0\rt)\) itself does not satisfy the hypotheses of \pref{UB-Md-Prop}.  Thus, the question of whether \(\CH\) is unbounded above on manifolds admitting torsion-free \g\ 3-forms appears to remain beyond our current understanding.
\end{Rk}

I begin with the following lemma:
\begin{Lem}\label{quadlem}
Let \({\bf a}\in\fr{A}\), let \(r \ge 0\) denote the radial distance from the singular locus in \(\h{U}_{\bf a}\), i.e.:
\ew
r^2 = \lt(y^1\rt)^2 + \lt(y^2\rt)^2 + \lt(y^5\rt)^2 + \lt(y^6\rt)^2,
\eew
where the \(y^i\) are defined in \erefs{Ph0} and \eqref{Ph1}, and the associated discussion, and define:
\ew
\h{\xi}^\mu = \mu^6\dd y^{123} + \dd y^{145} + \dd y^{167} - \dd y^{246} + \dd y^{257} + \dd y^{347} + \dd y^{356}.
\eew
Then there exist a constant \(C>0\) and a 2-form \(\h{\al}_{\bf a}\) on \(\h{U}_{\bf a}\), both independent of \(\mu\), satisfying:
\e\label{quadbd}
\lt|\h{\al}_{\bf a}\rt|_{\mns{\h{\xi}^\mu}} \le C\mu^{-1} r^2 \et \lt|\dd\h{\al}_{\bf a}\rt|_{\mns{\h{\xi}^\mu}} \le Cr
\ee
such that:
\ew
\h{\vph}^\mu - \h{\xi}^\mu = y^1\dd y^{147} +  \dd\h{\al}_{\bf a}.
\eew
(Here \(|\cdot|_{\mns{\h{\xi}^\mu}}\) denotes the pointwise norm induced by the \g\ 3-form \(\h{\xi}^\mu\). E.g.\ in the case \(\mu=1\), this is just the Euclidean norm in the \(y^i\) coordinates, denoted \(| \1 |_\E\).)
\end{Lem}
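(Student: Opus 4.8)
The plan is to make everything explicit in the local coordinates $y^i$ near $S$ and then read the bounds off \lref{scaling-lem}. Pulling the left-invariant $1$-forms $\th^i$ back along $\Ph_{\bf 0}$ (via \eref{FFKM1forms} and \eqref{Ph0}), one computes $\th^1 = \dd y^1$, $\th^2 = \dd y^2$, $\th^3 = \dd y^3$, $\th^4 = \dd y^4 - y^2\dd y^1$, $\th^5 = \dd y^5 + y^1\dd y^3$, $\th^6 = \dd y^6 + y^1\dd y^4$ and $\th^7 = \dd y^7 + y^1\dd y^5 + \tfrac12(y^1)^2\dd y^3$; the same formulae hold on every chart $\Ph_{\bf a}$, since $\Ph_{\bf a}$ differs from $\Ph_{\bf 0}$ (resp.\ from $\Ph_{\bf 1}$, when $a^1 = 1$) by a left translation of $G$, under which the $\th^i$ are invariant — and one checks that \eqref{Ph1} gives the identical answer. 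Substituting into $\vph^\mu$ and using $\th^{123} = \dd y^{123}$, the $\mu^6$-terms of $\h{\vph}^\mu$ and of $\h{\xi}^\mu$ cancel, so $\h{\vph}^\mu - \h{\xi}^\mu$ is \emph{independent of $\mu$}; expanding the wedge products, one finds $\h{\vph}^\mu - \h{\xi}^\mu = y^1\dd y^{147} + \om_{\bf a}$, where $\om_{\bf a}$ is an explicit closed polynomial $3$-form each of whose monomials has coefficient a polynomial in $y^1, y^2$ alone and form part $\dd y^i\w\dd y^j\w\dd y^k$ with at least one index in $\{1,2,3\}$.

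\emph{Constructing $\h{\al}_{\bf a}$.} The obstruction term $y^1\dd y^{147}$ is left untouched: its only $2$-form primitives are scalar multiples of $(y^1)^2\dd y^{47}$, $y^1y^4\dd y^{17}$ or $y^1y^7\dd y^{14}$, and none is acceptable (the first has $\h{\xi}^\mu$-norm $\sim\mu^2$, fatal for the required bound; the others involve $y^4$ or $y^7$, which are not globally defined functions on $T$). For each remaining monomial of $\om_{\bf a}$ I would write down an explicit $2$-form primitive, choosing the direction of integration so that (i) the coefficient remains a polynomial in the ball coordinates $y^1, y^2, y^5, y^6$ — never the base angles $y^3, y^4, y^7$ — with no constant or linear part, and (ii) the $2$-form retains at least one of $\dd y^1, \dd y^2, \dd y^3$. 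For example $(y^1)^2\dd y^{145} = \dd\big((y^1)^2 y^5\,\dd y^{14}\big)$ and $-\tfrac12(y^1)^2\dd y^{136} = \dd\big(-\tfrac16(y^1)^3\,\dd y^{36}\big)$, and occasionally one primitive clears two monomials at once, e.g.\ $\dd\big(y^1y^2\,\dd y^{37}\big) = y^2\dd y^{137} + y^1\dd y^{237}$. Summing these over the finitely many monomials (and symmetrizing under the involution $\mc{I}$ if necessary) defines $\h{\al}_{\bf a}$, a $\{\pm1\}$-invariant polynomial $2$-form on $\h{U}_{\bf a}$ with $\dd\h{\al}_{\bf a} = \om_{\bf a}$ by construction.

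\emph{The estimates.} By \lref{scaling-lem}(1), the $\Gg_2$-basis of $\h{\xi}^\mu$ is $(\mu^2\dd y^1, -\mu^2\dd y^2, -\mu^2\dd y^3, \mu^{-1}\dd y^4, \mu^{-1}\dd y^5, \mu^{-1}\dd y^6, \mu^{-1}\dd y^7)$, so
\[
g_{\h{\xi}^\mu} = \mu^4\big[(\dd y^1)^{\ts2} + (\dd y^2)^{\ts2} + (\dd y^3)^{\ts2}\big] + \mu^{-2}\big[(\dd y^4)^{\ts2} + (\dd y^5)^{\ts2} + (\dd y^6)^{\ts2} + (\dd y^7)^{\ts2}\big],
\]
whence $|\dd y^i|_{\h{\xi}^\mu} = \mu^{-2}$ for $i\le 3$ and $|\dd y^i|_{\h{\xi}^\mu} = \mu$ for $i\ge 4$. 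Every $2$-form appearing in $\h{\al}_{\bf a}$ is $\dd y^a\w\dd y^b$ with one index $\le 3$ and one $\ge 4$, hence of $\h{\xi}^\mu$-norm exactly $\mu^{-1}$, while its coefficient is $\le Cr^2$ on $\{r\le\ep\}$; this gives $|\h{\al}_{\bf a}|_{\h{\xi}^\mu} \le C\mu^{-1}r^2$. Every $3$-form appearing in $\dd\h{\al}_{\bf a} = \h{\vph}^\mu - \h{\xi}^\mu - y^1\dd y^{147}$ contains at least one of $\dd y^1, \dd y^2, \dd y^3$, so its $\h{\xi}^\mu$-norm is $\le 1$ for every $\mu \ge 1$, while its coefficient is $\le Cr$ on $\{r\le\ep\}$; this gives $|\dd\h{\al}_{\bf a}|_{\h{\xi}^\mu} \le Cr$. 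In both inequalities the constant $C$ is independent of $\mu$.

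\emph{Main obstacle.} This is the construction of $\h{\al}_{\bf a}$. A naive Poincaré-lemma primitive (e.g.\ from the radial homotopy operator) would, for a monomial such as $(y^1)^2\dd y^{145}$, produce a term whose form part is $\dd y^4\w\dd y^5$, of $\h{\xi}^\mu$-norm $\mu^2$ — which destroys the bound $C\mu^{-1}r^2$ — while other choices import the non-global base coordinates $y^3, y^4, y^7$. One must verify by hand that a ``good'' direction of integration exists for every monomial of $\om_{\bf a}$; this works exactly because each monomial has coefficient depending only on $y^1, y^2$ and a form part touching $\{\dd y^1, \dd y^2, \dd y^3\}$, and, dually, because the single monomial $y^1\dd y^{147}$ — for which no good primitive exists — is precisely the term singled out in the statement (the term which, as explained before the lemma, must persist through the resolution of singularities).
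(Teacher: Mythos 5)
Your proposal is correct and takes essentially the same route as the paper's proof: pull the \(\th^i\) back to the \(y^i\)-coordinates (the formulae being identical for \(\Ph_{\bf 0}\) and \(\Ph_{\bf 1}\) and unchanged by the left-translations \(f_{\bf a}\), \(g_{\bf a}\)), note that \(\vph^\mu-\xi^\mu\) is \(\mu\)-independent, split off \(y^1\dd y^{147}\), construct an explicit \(\mc{I}\)-invariant polynomial primitive whose \(2\)-form factors each contain one of \(\dd y^1,\dd y^2,\dd y^3\) and whose coefficients are \(O(r^2)\), and estimate using \(g_{\h{\xi}^\mu}=\mu^4\lt[(\dd y^1)^{\ts2}+(\dd y^2)^{\ts2}+(\dd y^3)^{\ts2}\rt]+\mu^{-2}\lt[(\dd y^4)^{\ts2}+\cdots+(\dd y^7)^{\ts2}\rt]\) before descending to \(\h{\M}\). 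The only difference is presentational: the paper packages the primitive in one closed formula, \(\al_{\bf a}=\dd y^1\w\be_{\bf a}+\dd y^3\w\ga_{\bf a}\), and obtains the bounds from \(|\cdot|_{\h{\xi}^\mu}\le\mu^k|\cdot|_\E\) on \(k\)-forms together with \(|\dd y^1|_{\h{\xi}^\mu}=|\dd y^3|_{\h{\xi}^\mu}=\mu^{-2}\), which is exactly the estimate you carry out monomial-by-monomial.
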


\begin{proof}
Begin by working on \(U_{\bf a}\).  Using the equation:
\ew
\Ph_{\bf a} =
\begin{dcases*}
f_{\bf a} \circ \Ph_{\bf 0} & if \(a^1 = 0\);\\
g_{\bf a} \circ \Ph_{\bf 1} & if \(a^1 = 1\),
\end{dcases*}
\eew
together with the fact that both \(f_{\bf a}\) and \(g_{\bf a}\) are induced by left-translations, and hence preserve each \(\th^i\), one sees that:
\ew
\Ph_{\bf a}^*\th^i =
\begin{dcases*}
\Ph_{\bf 0}^*\th^i & if \(a^1 = 0\);\\
\Ph_{\bf 1}^*\th^i & if \(a^1 = 1\).
\end{dcases*}
\eew
Using the explicit expressions for \(\Ph_{\bf 0}\) and \(\Ph_{\bf 1}\) given in \erefs{Ph0} and \eqref{Ph1}, together with \eref{FFKM1forms}, it follows that:
\ew
\Ph_{\bf 0}^*\bpm
\th^1\\
\th^2\\
\th^3\\
\th^4\\
\th^5\\
\th^6\\
\th^7
\epm
=
\Ph_{\bf 1}^*\bpm
\th^1\\
\th^2\\
\th^3\\
\th^4\\
\th^5\\
\th^6\\
\th^7
\epm
=
\begin{pmatrix}
\dd y^1\\
\dd y^2\\
\dd y^3\\
\dd y^4 - y^2 \dd y^1\\
\dd y^5 + y^1\dd y^3\\
\dd y^6 + y^1\dd y^4\\
\dd y^7 + y^1\dd y^5 + \tfrac{1}{2}\lt(y^1\rt)^2\dd y^3
\end{pmatrix}.
\eew

Therefore:
\ew
\vph^\mu - \xi^\mu &= y^1\lt(\dd y^{147} - \dd y^{156} - \dd y^{134} + \dd y^{237}\rt) + y^2\lt(\dd y^{137} - \dd y^{126}\rt)\\
&\hs{8mm}+ \tfrac{1}{2}\lt(y^1\rt)^2\lt(2 \dd y^{145} - \dd y^{136} + \dd y^{235}\rt) + y^1y^2\lt(\dd y^{135} - \dd y^{124}\rt) - \tfrac{1}{2}\lt(y^1\rt)^3\dd y^{134}\\
&= y^1\dd y^{147} + \dd\al_{\bf a},
\eew
where:
\ew
\al_{\bf a}  &= \dd y^1 \w \lt[\lt(y^1y^5 + \tfrac{1}{2}\lt(y^2\rt)^2\rt)\dd y^6 + \lt(\lt(y^1\rt)^2y^5 + \tfrac{1}{2}y^1\lt(y^2\rt)^2\rt)\dd y^4 - \tfrac{1}{2}\lt(y^1\rt)^2y^6\dd y^3\rt]\\
& \hs{24mm} + \dd y^3 \w \lt[\lt(-\tfrac{1}{2}\lt(y^1\rt)^2 - \tfrac{1}{8}\lt(y^1\rt)^4\rt)\dd y^4 + y^1y^2\dd y^7 + \tfrac{1}{2}\lt(y^1\rt)^2y^2\dd y^5\rt]\\
&= \dd y^1 \w \be_{\bf a} + \dd y^3 \w \ga_{\bf a}.
\eew
Observe that there exists \(C>0\) independent of \(\mu\) such that:
\ew
|\be_{\bf a}|_\E, |\ga_{\bf a}|_\E \le \frac{C}{2}r^2 \et |\dd\be_{\bf a}|_\E, |\dd\ga_{\bf a}|_\E \le \frac{C}{2}r.
\eew
Also, by solving the linear system in \eref{lin-sys}, one may verify that:
\e\label{met-expr}
g_{\xi^\mu} = \mu^4\lt(\lt(\dd y^1\rt)^{\ts2} + \lt(\dd y^2\rt)^{\ts2} + \lt(\dd y^3\rt)^{\ts2}\rt) + \mu^{-2}\lt(\lt(\dd y^4\rt)^{\ts2} + \lt(\dd y^5\rt)^{\ts2} + \lt(\dd y^6\rt)^{\ts2} + \lt(\dd y^7\rt)^{\ts2}\rt).
\ee
In particular \(g_{\xi^\mu} \ge \mu^{-2}g_\E\) when acting on vectors.  It follows that \(|\cdot|_{\mns{\xi^\mu}} \le \mu|\cdot|_\E\) when acting on 1-forms, and \(|\cdot|_{\mns{\xi^\mu}} \le \mu^2|\cdot|_\E\) when acting on 2-forms.  Hence:
\ew
|\be_{\bf a}|_{\mns{\xi^\mu}}, |\ga_{\bf a}|_{\mns{\xi^\mu}} \le \frac{C}{2}\mu r^2 \et |\dd\be_{\bf a}|_{\mns{\xi^\mu}}, |\dd\ga_{\bf a}|_{\mns{\xi^\mu}} \le \frac{C}{2}\mu^2r.
\eew
One may also compute that \(\lt|\dd y^1\rt|_{\mns{\xi^\mu}} = \lt|\dd y^3\rt|_{\mns{\xi^\mu}} = \mu^{-2}\).  Therefore:
\ew
|\al_{\bf a}|_{\mns{\xi^\mu}} &\le \lt|\dd y^1\rt|_{\mns{\xi^\mu}} |\be_{\bf a}|_{\mns{\xi^\mu}} + \lt|\dd y^3\rt|_{\mns{\xi^\mu}} |\ga_{\bf a}|_{\mns{\xi^\mu}}\\
&\le C\mu^{-1}r^2,
\eew
as required.  Likewise \(\dd\al_{\bf a} = \dd y^1 \w \dd\be_{\bf a} + \dd y^3 \w \dd\ga_{\bf a}\) and hence \(|\dd\al_{\bf a}|_{\mns{\xi^\mu}} \le Cr\).  Since \(\mc{I}^*\al_{\bf a} = \al_{\bf a}\), \(\al_{\bf a}\) descends to define the required 2-form \(\h{\al}_{\bf a}\) on \(\h{U}_{\bf a}\).

\end{proof}

\begin{Rk}
The term \(y^1\dd y^{147}\) is also exact with primitive \(\frac{1}{2}\lt(y^1\rt)^2\dd y^{47}\), however one may calculate that:
\ew
\lt|\frac{1}{2}\lt(y^1\rt)^2\dd y^{47}\rt|_{\mns{\h{\xi}^\mu}} = \frac{\mu^2}{2}\lt|y^1\rt|^2;
\eew
thus this primitive does not satisfy the bounds in \eref{quadbd}.  It is for this reason that the term \(y^1 \dd y^{147}\) is dealt with separately to the other terms in the expression for \(\h{\vph}^\mu - \h{\xi}^\mu\).
\end{Rk}

Using \lref{quadlem}, I now prove:
\begin{Prop}\label{almostprod}
There exists \(\ep_0>0\), independent of \(\mu\), such that for all \(\ep\in(0,\ep_0]\), the following is true:

For all \(\mu\ge1\), there exists a closed, orbifold \g\ 3-form \(\h{\ph}^\mu\) on \(\h{\M}\) such that:
\ew
\h{\ph}^\mu = \h{\vph}^\mu \text{ on } \lt.\h{\M}\m\osr\coprod_{{\bf a}\in\fr{A}} \h{U}_{\bf a}\rt.
\eew
and on each \(\h{W}_{\bf a}\) for \({\bf a}\in\fr{A}\), one has:
\ew
\h{\ph}^\mu = \mu^6\dd y^{123} + \dd y^{145} + \dd y^{167} - \dd y^{246} + \dd y^{257} + \dd y^{347} + \dd y^{356} + y^1\dd y^{147}.
\eew

\end{Prop}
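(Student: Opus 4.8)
The plan is to build $\h{\ph}^\mu$ by patching, keeping the term $y^1\dd y^{147}$ intact and interpolating away \emph{only} the correction term $\dd\h{\al}_{\bf a}$ supplied by \lref{quadlem}, via a radial cut-off.  Concretely, fix a smooth function $\chi=\chi(r)$ with $\chi\equiv 0$ for $r\le\tfrac12\ep$, $\chi\equiv 1$ for $r\ge\tfrac34\ep$, $0\le\chi\le1$ and $\lt|\chi'\rt|\le C_0\ep^{-1}$ for an absolute constant $C_0$, set $\h{\ph}^\mu=\h{\vph}^\mu$ on $\h{\M}\osr\coprod_{{\bf a}\in\fr{A}}\h{U}_{\bf a}$, and on each $\h{U}_{\bf a}$ put
\ew
\h{\ph}^\mu = \h{\xi}^\mu + y^1\dd y^{147} + \dd\lt(\chi\,\h{\al}_{\bf a}\rt).
\eew
Since $r$ is $\mc{I}$-invariant and $\h{\al}_{\bf a}$, $\h{\xi}^\mu$ and $y^1\dd y^{147}$ each descend to $\h{U}_{\bf a}$ (by \lref{quadlem} and the sign behaviour of the $y^i$ under $\mc{I}$ recorded through \erefs{Ph0} and \eqref{Ph1}), $\h{\ph}^\mu$ is a well-defined orbifold $3$-form.

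First I would verify the easy points.  On the region $r\ge\tfrac34\ep$ of $\h{U}_{\bf a}$ one has $\chi\equiv 1$, so $\h{\ph}^\mu=\h{\xi}^\mu+y^1\dd y^{147}+\dd\h{\al}_{\bf a}=\h{\vph}^\mu$ by \lref{quadlem}; hence the two local definitions agree on their overlap and patch to a smooth global form, which moreover equals $\h{\xi}^\mu+y^1\dd y^{147}$ on $\h{W}_{\bf a}$ (where $\chi\equiv0$), exactly the formula in the statement.  Closedness is immediate: $\dd\h{\xi}^\mu=0$ since $\h{\xi}^\mu$ has constant coefficients in the $y^i$, $\dd\lt(y^1\dd y^{147}\rt)=\dd y^1\w\dd y^{147}=0$, and $\dd\dd\lt(\chi\h{\al}_{\bf a}\rt)=0$; outside $\coprod\h{U}_{\bf a}$ one has $\dd\h{\vph}^\mu=0$.

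The substantive point is that $\h{\ph}^\mu$ is of \g-type on $\h{U}_{\bf a}$ once $\ep$ is small, \emph{uniformly in $\mu\ge1$}.  Write $\eta^\mu=\h{\ph}^\mu-\h{\xi}^\mu=y^1\dd y^{147}+\chi\,\dd\h{\al}_{\bf a}+\dd\chi\w\h{\al}_{\bf a}$.  By \eref{met-expr} the metric $g_{\h{\xi}^\mu}$ equals $\mu^4\sum_{i=1}^3\lt(\dd y^i\rt)^{\ts2}+\mu^{-2}\sum_{i=4}^7\lt(\dd y^i\rt)^{\ts2}$, from which $\lt|\dd y^{147}\rt|_{\mns{g_{\h{\xi}^\mu}}}=1$ and $\lt|\dd r\rt|_{\mns{g_{\h{\xi}^\mu}}}\le\mu$ for $\mu\ge1$.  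Combining these with \eref{quadbd}, on $\h{U}_{\bf a}$ (where $r\le\ep$) one gets $\lt|y^1\dd y^{147}\rt|_{\mns{g_{\h{\xi}^\mu}}}\le r\le\ep$ and $\lt|\chi\,\dd\h{\al}_{\bf a}\rt|_{\mns{g_{\h{\xi}^\mu}}}\le Cr\le C\ep$, while on $\supp\dd\chi$ (where $r\le\tfrac34\ep$)
\ew
\lt|\dd\chi\w\h{\al}_{\bf a}\rt|_{\mns{g_{\h{\xi}^\mu}}}\le\lt(C_0\mu\ep^{-1}\rt)\lt(C\mu^{-1}\ep^2\rt)=CC_0\ep,
\eew
the two powers of $\mu$ cancelling precisely because of the factor $\mu^{-1}$ in the first bound of \eref{quadbd}.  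Hence $\lt|\eta^\mu\rt|_{\mns{g_{\h{\xi}^\mu}}}\le C_1\ep$ with $C_1$ independent of $\mu$.  Passing to the $g_{\h{\xi}^\mu}$-orthonormal coframe $\lt(\mu^2\dd y^i\rt)_{i\le3}\cup\lt(\mu^{-1}\dd y^i\rt)_{i\ge4}$, the coefficients of $\h{\xi}^\mu$ become independent of $\mu$ (it becomes a fixed model \g\ 3-form, cf.\ \lref{scaling-lem}(1)), so openness of the set of \g\ 3-forms yields a $\mu$-independent $\de_0>0$ such that $\h{\xi}^\mu+\eta$ is of \g-type whenever $\lt|\eta\rt|_{\mns{g_{\h{\xi}^\mu}}}<\de_0$.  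Taking $\ep_0=\de_0/(2C_1)$, further shrunk if necessary so that the $\h{U}_{\bf a}$ remain disjoint tubular neighbourhoods of the components of $\h{S}$ (exactly as in the original construction), then gives the conclusion for every $\ep\in(0,\ep_0]$ and every $\mu\ge1$.

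I expect the uniformity in $\mu$ to be the only genuine difficulty, and it is exactly what forces the shape of the construction.  A naive interpolation deforming $\h{\vph}^\mu$ all the way to $\h{\xi}^\mu$ would also have to cut off the term $y^1\dd y^{147}$; its only obvious primitive $\tfrac12\lt(y^1\rt)^2\dd y^{47}$ has $g_{\h{\xi}^\mu}$-norm of order $\mu^2r^2$ (see the Remark following \lref{quadlem}), so the cut-off scale, and hence $\ep$, would be forced to shrink with $\mu$, which would in turn make the cohomology class of the eventual resolved form $\ca{\ph}^\mu$ vary with $\mu$ (cf.\ \rref{Rk-on-Coh}).  Keeping $y^1\dd y^{147}$ inside $\h{\ph}^\mu$ and cutting off only $\h{\al}_{\bf a}$, whose correction carries the extra $\mu^{-1}$, is what renders the error term $\mu$-uniform; the remaining work is routine bookkeeping of $\mc{I}$-equivariance to ensure that $\h{\ph}^\mu$ really descends to $\h{\M}$.
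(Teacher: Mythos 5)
Your proof is correct and follows essentially the same route as the paper: interpolate away only the correction $\dd\h{\al}_{\bf a}$ from \lref{quadlem} by a radial cut-off while keeping $y^1\dd y^{147}$, bound $\lt|\h{\ph}^\mu-\h{\xi}^\mu\rt|_{\mns{\h{\xi}^\mu}}$ by a $\mu$-independent multiple of $\ep$ using the $\mu^{-1}$ factor in \eref{quadbd} together with $\lt|\dd r\rt|_{\mns{\h{\xi}^\mu}}\le\mu$, invoke stability of \g\ 3-forms, and descend by $\mc{I}$-invariance. The only differences are cosmetic (your cut-off thresholds and working directly on $\h{U}_{\bf a}$ rather than upstairs on $\M$).
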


\begin{proof}
Again, begin by working at the level of \(\M\).  Let \(f:[0,\infty)\to[0,1]\) be a smooth function such that:
\e\label{f}
\pt &\hs{5mm} f\equiv 0 \text{ on an open neighbourhood of } \lt[0,\tfrac{1}{2}\rt];\\
\pt &\hs{5mm} f\equiv 1 \text{ on an open neighbourhood of } [1,\infty);\hs{50mm}\\
\pt &\hs{5mm} \lt\|f'\rt\|_\infty \le 3.
\ee
Consider the 3-form \(\ph\) on \(U_{\bf a}\) defined by:
\e\label{ph-mu}
\ph^\mu = \xi^\mu + y^1\dd y^{147} + \dd\lt[f\lt(\frac{r}{\ep}\rt)\al_{\bf a}\rt].
\ee
Clearly \(\ph^\mu\) is closed and satisfies:
\ew
\ph^\mu = 
\begin{dcases*}
\xi^\mu + y^1\dd y^{147} & on \(W_{\bf a}\);\\
\vph^\mu & near the boundary of \(U_{\bf a}\).
\end{dcases*}
\eew
On \(U_{\bf a}\), using \erefs{quadbd}, \eqref{met-expr} and \eqref{f}, one may compute that:
\ew
\lt|\ph^\mu - \xi^\mu\rt|_{\mns{\xi^\mu}} &\le \lt|y^1\dd y^{147}\rt|_{\mns{\xi^\mu}} + \lt|\dd\al_{\bf a}\rt|_{\mns{\xi^\mu}} + \frac{\lt\|f'\rt\|_\infty}{\ep}\lt|\dd r\rt|_{\mns{\xi^\mu}}\lt|\al_{\bf a}\rt|_{\mns{\xi^\mu}}\\
&\le (4C + 1)\ep,
\eew
where \(C>0\) is as in \lref{quadlem} (recall that \(\lt|\dd r\rt|_{\mns{\xi^\mu}} \le \mu\), as in the proof of \lref{quadlem}).  Thus \(\ph^\mu\) is of \g-type for all \(\ep > 0\) sufficiently small, independent of \(\mu\), by the stability of \g\ 3-forms.  Since \(\xi^\mu\), \(y^1 \dd y^{147}\) and \(\al_{\bf a}\) are all \(\mc{I}\)-invariant, the form \(\ph^\mu\) descends to define an orbifold \g\ 3-form \(\h{\ph}^\mu\) on \(\h{\M}\), completing the proof.

\end{proof}

One can also use \lref{quadlem} to give an explicit formula for \(\h{\ph}\) described in \sref{FFKM-constr}, on the region \(\h{U}_{\bf a}\).  Explicitly, one takes:
\e\label{h-ph}
\h{\ph} = \h{\xi} + \dd\lt[f\lt(\frac{r}{\ep}\rt)\lt(\tfrac{1}{2}\lt(y^1\rt)^2\dd y^{47} + \al_{\bf a}\rt)\rt].
\ee
In particular, note that whilst \(\h{\vph}^1 = \h{\vph}\), it is not true that \(\h{\ph}^1 = \h{\ph}\).

The task now is to resolve the singularities in \(\h{\ph}^\mu\).  I begin by introducing some notation. Firstly, for \(k\in(0,\infty)\), define:
\ew
\B^4\lt(\tfrac{1}{2}\ep,k\rt) = \lt\{\lt(w^1,w^2\rt) \in \bb{C}^2~\m|~k^6\lt|w^1\rt|^2 + \lt|w^2\rt|^2 < \frac{1}{2}\ep\rt\}.
\eew
Thus \(\B^4\lt(\tfrac{1}{2}\ep,k\rt)\) is a complex ellipse with radius \(\frac{1}{2}k^{-3}\ep\) in the \(w^1\)-direction and radius \(\frac{1}{2}\ep\) in the \(w^2\)-direction. Also define \(\tld{X}\lt(\tfrac{1}{2}\ep,k\rt)\) to be the pre-image of \(\lqt{\B^4\lt(\tfrac{1}{2}\ep,k\rt)}{\{\pm1\}}\) under the blow-down map \(\rh\) and, for \(k \in \lt[2^{-\frac{1}{3}}, \infty\rt)\), define \(\ca{W}_{{\bf a},k}\) to be the subset of \(\ca{U}_{\bf a}\) corresponding to \(T \x \tld{X}\lt(\tfrac{1}{2}\ep,k\rt)\). (\(k \ge 2^{-\frac{1}{3}}\) is needed to ensure that \(\ca{W}_{{\bf a},k} \pc \ca{U}_{\bf a}\).)  Secondly, define:
\ew
\bb{T}^3_\mu = \rqt{\bb{R}^3}{\mu^3\bb{Z} \ds \bb{Z} \ds \bb{Z}}.
\eew
Analogously, let \(\La_\mu\) denote the image of \(\La\) under the map \((y^3,y^4,y^7) \in \bb{R}^3 \mt (\mu^3 y^3, y^4, y^7) \in \bb{R}^3\) and define:
\ew
\tld{\bb{T}^3_\mu} = \rqt{\bb{R}^3}{\La_\mu}.
\eew
As above, use \(T_\mu\) to denote either \(\bb{T}^3_\mu\) or \(\tld{\bb{T}^3_\mu}\) as appropriate.

Begin by considering the space \(R_\mu = \lt(T_\mu\rt)_{y^3,y^4,y^7} \x \tld{X}\lt(\frac{1}{2}\ep, \mu^{-1}\rt)_{y^1,y^2,y^5,y^6} \yy T_\mu \x \tld{X}_{\frac{1}{2}\ep}\).  Define a 3-form \(\si\) on \(R_\mu\) via:
\e\label{si-defn}
\si = \dd\lt[f\lt(\frac{2r}{\ep}\rt)\cdot\frac{1}{2}\lt(y^1\rt)^2\dd y^{47}\rt],
\ee
where \(f\) is as defined in \eref{f}.  Clearly, \(\si\) vanishes near the exceptional locus and thus \(\si\) defines a smooth 3-form over all of \(R_\mu\), via extension by zero. Moreover, outside the region \(T_\mu \x \tld{X}_{\frac{1}{2}\ep}\) (i.e.\ on the region \(\lt\{r \ge \frac{1}{2}\ep\rt\}\)) \(\si\) is simply given by \(y^1 \dd y^{147}\).

Next, define a 3-form \(\ze\) on \(R_\mu\) via:
\e\label{ze-defn}
\ze = \dd y^{347} + \dd y^3 \w \ca{\om}_t - \dd y^4 \w \fr{Re}\tld{\Om} + \dd y^7 \w \fr{Im}\tld{\Om}
\ee
for \(\ca{\om}_t\) as in \sref{FFKM-constr}. \(\ze\) defines a \g\ 3-form on \(R_\mu\) by \lref{scaling-lem}. Finally, define a 3-form \(\ze^\mu\) on \(R_\mu\) as follows:
\ew
\ze^\mu = \ze + \frac{1}{\mu^3}\si.
\eew

\begin{Lem}\label{tech-lem-for-res}
For \(\ep>0\) sufficiently small, independent of \(\mu\), and for \(\mu\) sufficiently large, \(\ze^\mu\) is of \g-type on \(R_\mu\).
\end{Lem}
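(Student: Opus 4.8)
The plan is to invoke the stability (openness) of \g-type 3-forms, just as in the proof of \pref{almostprod}. Since $\ze$ is of \g-type on $R_\mu$, there is a constant $\de>0$, depending only on the dimension, such that any 3-form $\ze'$ on $R_\mu$ with $\lt|\ze'-\ze\rt|_{g_\ze}<\de$ pointwise is again of \g-type, where $g_\ze$ denotes the metric induced by $\ze$. Since $\ze^\mu-\ze=\mu^{-3}\si$, it therefore suffices to establish a pointwise estimate
\[
\lt|\si\rt|_{g_\ze}\ \le\ C\lt(\ep+\ep^\frac{1}{2}\mu^3\rt)\qquad\text{on }R_\mu,
\]
with $C$ independent of $\ep$ and $\mu$; dividing by $\mu^3$ then gives $\lt|\ze^\mu-\ze\rt|_{g_\ze}\le C'\ep^\frac{1}{2}$ for all $\mu\ge1$, and fixing $\ep_0\in(0,1]$ with $C'\ep_0^\frac{1}{2}<\de$ yields the lemma (in fact for every $\mu\ge1$).

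First I would record $g_\ze$. Applying \lref{scaling-lem}(2) pointwise to $\ze$, with $g^1=\dd y^3$, $g^2=\dd y^4$, $g^3=\dd y^7$, $\om=\ca{\om}_t$ and $\Om=\tld{\Om}$, gives
\[
g_\ze=\nu^\frac{4}{3}\lt(\dd y^3\rt)^{\ts2}+\nu^{-\frac{2}{3}}\lt[\lt(\dd y^4\rt)^{\ts2}+\lt(\dd y^7\rt)^{\ts2}\rt]+\nu^{-\frac{2}{3}}g_{\ca{\om}_t},
\]
where $\nu>0$ is determined pointwise by $2\ca{\om}_t^2=\nu^2\,\tld{\Om}\w\ol{\tld{\Om}}$. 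On the region $\lt\{r\ge\tfrac{1}{4}\ep\rt\}$ — which will contain the support of $\si$ — the form $\ca{\om}_t$ equals the flat form $\h{\om}$ for $r\ge\tfrac{1}{2}\ep$ and, on the interpolation annulus $\tfrac{1}{4}\ep\le r\le\tfrac{1}{2}\ep$, is uniformly close to $\h{\om}$ once $t$ is chosen small relative to the (fixed) $\ep$, by the standard asymptotics of the Eguchi--Hanson metric; such a choice of $t$ is permitted by the construction of \sref{FFKM-constr}. Consequently, on $\lt\{r\ge\tfrac{1}{4}\ep\rt\}$, $\nu$ lies between two absolute positive constants and $g_{\ca{\om}_t}$ is uniformly comparable to the Euclidean metric $g_\E$ in the $y^i$-coordinates, so that $\lt|\dd r\rt|_{g_\ze}$, $\lt|\dd y^1\rt|_{g_\ze}$, $\lt|\dd y^4\rt|_{g_\ze}$ and $\lt|\dd y^7\rt|_{g_\ze}$ are all bounded by an absolute constant there.

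Next I would differentiate \eref{si-defn}:
\[
\si=\frac{\lt(y^1\rt)^2}{\ep}\,f'\!\lt(\frac{2r}{\ep}\rt)\,\dd r\w\dd y^{47}\ +\ f\!\lt(\frac{2r}{\ep}\rt)\,y^1\,\dd y^{147}.
\]
Both terms vanish for $r\le\tfrac{1}{4}\ep$ (in particular near $\fr{E}$), so all estimates are made on $\lt\{r\ge\tfrac{1}{4}\ep\rt\}$. On the support of the first term one has $\tfrac{1}{4}\ep\le r\le\tfrac{1}{2}\ep$, hence $\lt(y^1\rt)^2\le r^2\le\tfrac{1}{4}\ep^2$, while $\lt|f'\rt|\le3$ by \eref{f}; together with the bounds on $\lt|\dd r\rt|_{g_\ze},\lt|\dd y^4\rt|_{g_\ze},\lt|\dd y^7\rt|_{g_\ze}$ this term has $g_\ze$-norm $\le C\ep$. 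For the second term $\lt|f\rt|\le1$ and $\lt|\dd y^{147}\rt|_{g_\ze}\le\lt|\dd y^1\rt|_{g_\ze}\lt|\dd y^4\rt|_{g_\ze}\lt|\dd y^7\rt|_{g_\ze}\le C$, so its norm is $\le C\lt|y^1\rt|$; but $R_\mu=\lt(T_\mu\rt)_{y^3,y^4,y^7}\x\tld{X}\lt(\tfrac{1}{2}\ep,\mu^{-1}\rt)_{y^1,y^2,y^5,y^6}$ and $\tld{X}\lt(\tfrac{1}{2}\ep,\mu^{-1}\rt)$ lies over $\lqt{\B^4\lt(\tfrac{1}{2}\ep,\mu^{-1}\rt)}{\{\pm1\}}$, on which $\mu^{-6}\lt|w^1\rt|^2<\tfrac{1}{2}\ep$ with $w^1=y^1+iy^2$, so $\lt|y^1\rt|\le\lt|w^1\rt|<\lt(\tfrac{1}{2}\ep\rt)^\frac{1}{2}\mu^3$. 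Adding, $\lt|\si\rt|_{g_\ze}\le C\ep+C\ep^\frac{1}{2}\mu^3$ on $R_\mu$, as required.

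The heart of the matter is the interplay of these two bounds: the only part of $\si$ that is not controlled uniformly in $\mu$ is the term $f(2r/\ep)\,y^1\,\dd y^{147}$, whose size is governed by $\lt|y^1\rt|=O\!\lt(\mu^3\rt)$ — the growth forced by the anisotropic (elliptical) domain $\tld{X}\lt(\tfrac{1}{2}\ep,\mu^{-1}\rt)$ — and this is cancelled exactly by the prefactor $\mu^{-3}$ in $\ze^\mu=\ze+\mu^{-3}\si$. (This is precisely why the term $y^1\dd y^{147}$ was built into $\h{\ph}^\mu$ in \pref{almostprod} and then rescaled by $\mu^{-3}$ here.) I expect the only step requiring genuine care to be keeping $\lt|\dd r\rt|_{g_\ze}$, $\lt|\dd y^1\rt|_{g_\ze}$ and the like under \emph{absolute} (not merely $\ep$-dependent) control on the support of $\si$; this is secured by choosing $t$ small relative to $\ep$, so that $\ca{\om}_t$, and hence $g_\ze$, is uniformly comparable to the Euclidean metric away from a fixed neighbourhood of $\fr{E}$ — routine given the Eguchi--Hanson asymptotics.
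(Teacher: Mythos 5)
Your proposal is correct and takes essentially the same route as the paper: both rest on the stability of \g\ 3-forms together with a pointwise estimate on \(\mu^{-3}\si\) in the norm of \(\ze\), with the growth of the \(y^1\dd y^{147}\) term (of order \(\ep\mu^3\) on the elongated domain) killed by the prefactor \(\mu^{-3}\) and the contribution near the exceptional divisor bounded independently of \(\mu\). The only differences are organisational: the paper splits the domain \(R_\mu\) into \(\lt\{r\ge\tfrac{1}{2}\ep\rt\}\) (where \(\ze\) is Euclidean and \(\si=y^1\dd y^{147}\)) and \(T_\mu\x\tld{X}_{\frac{1}{2}\ep}\) rather than splitting \(\si\) into its two terms, and your uniform-in-\(\mu\) refinement is precisely what the paper records in the remark immediately following the lemma.
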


\begin{proof}
The proof is, again, an application of the stability of \g\ 3-forms.  Firstly, consider the region \(T_\mu \x \lt.\tld{X}\lt(\frac{1}{2}\ep, \mu^{-1}\rt)\m\osr\tld{X}_{\frac{1}{2}\ep}\rt.\) (i.e.\ the region \(\lt\{r \ge \frac{1}{2}\ep\rt\}\)). Here, \(\ze = \h{\xi}\) and \(\si = y^1\dd y^{147}\), so:
\ew
\lt|\si\rt|_\ze = \lt|y^1\rt| \le \frac{\ep\mu^3}{2}
\eew
and thus:
\ew
\lt|\ze^\mu - \ze\rt|_\ze \le \frac{1}{2}\ep.
\eew
Hence \(\ze_\mu\) is of \g-type on \(T_\mu \x \lt.\tld{X}\lt(\frac{1}{2}\ep, \mu^{-1}\rt)\m\osr\tld{X}_{\frac{1}{2}\ep}\rt.\) for all \(\mu\) if \(\ep\) is sufficiently small, independent of \(\mu\).

Now fix \(\ep\) and consider the region \(T_\mu \x \tld{X}_{\frac{1}{2}\ep}\). On this region \(\lt|\si\rt|_\ze \le C\) for some fixed \(C>0\) independent of \(\mu\). Thus:
\ew
\lt|\ze^\mu - \ze\rt|_\ze \le \frac{C}{\mu^3}.
\eew
Thus for \(\mu\) sufficiently large, \(\ze^\mu\) is also of \g-type on the region \(T_\mu \x \tld{X}_{\frac{1}{2}\ep}\). Thus \(\ze^\mu\) is of \g-type on all of \(R_\mu\) and the result is proven.

\end{proof}

\begin{Rk}
It can be shown that, whilst the constant \(C\) is independent of \(\mu\), it does depend on \(\ep\); specifically, \(C \to 0\) as \(\ep\to0\). Thus, reducing \(\ep\) if necessary (independently of \(\mu\)), the conclusion of \lref{tech-lem-for-res} is valid for all \(\mu\ge1\). However, since unboundedness ultimately occurs as \(\mu\to\infty\), the simpler statement above suffices.
\end{Rk}

Using this lemma, the \g\ 3-forms required for the resolution can be constructed. Firstly, consider the map \(\lqt{\bb{C}^2}{\{\pm1\}} \to \lqt{\bb{C}^2}{\{\pm1\}}\) given by:
\ew
\lt(w^1,w^2\rt) \mt \lt(\mu^3w^1,w^2\rt).
\eew
Restricting this map to the region \(\lt(\lqt{\bb{C}^2}{\{\pm1\}}\rt)\osr\lt\{0\rt\}\) and using the blow-up map \(\rh\) gives rise to a map \(\fr{h}^\mu:\lt.\tld{X}\m\osr\fr{E}\rt. \to \lt.\tld{X}\m\osr\fr{E}\rt.\) which extends to all of \(\tld{X}\). Now define:
\e\label{ca-ph-mu-homthty}
\fr{H}^\mu: \ca{W}_{\bf a} \cong \lt(T\rt)_{y^3,y^4,y^7} \x \lt(\tld{X}_{\frac{1}{2}\ep}\rt)_{y^1,y^2,y^5,y^6} \to \lt(T_\mu\rt)_{y^3,y^4,y^7} \x \tld{X}\lt(\tfrac{1}{2}\ep, \mu^{-1}\rt)_{y^1,y^2,y^5,y^6},
\ee
where the action of \(\fr{H}^\mu\) on \(T\) is induced by the map \((y^3,y^4,y^7) \in \bb{R}^3 \mt (\mu^3 y^3, y^4, y^7) \in \bb{R}^3\) and \(\fr{H}^\mu\) acts on \(\tld{X}_{\frac{1}{2}\ep}\) by \(\fr{h}^\mu\), and write:
\e\label{tldze}
\ca{\ze}^\mu = \mu^{-3}\lt(\fr{H}^\mu\rt)^*\ze^\mu.
\ee
By \lref{tech-lem-for-res}, this is a smooth, closed, \g\ 3-form on \(\ca{W}_{\bf a}\). An explicit computation shows that near the boundary of \(\ca{W}_{\bf a}\) (and, more generally, on an open neighbourhood of the region \(\lt.\ca{W}_{\bf a}\m\osr\ca{W}_{{\bf a},\mu}\rt.\)):
\e\label{outside-surg}
\ca{\ze}^\mu = \mu^6\dd y^{123} + \dd y^{145} + \dd y^{167} - \dd y^{246} + \dd y^{257} + \dd y^{347} + \dd y^{356} + y^1\dd y^{147},
\ee
which matches the boundary condition required for the resolution.  Thus for each \(\mu\in[1,\infty)\), one obtains a smooth, closed \g\ 3-form \(\ca{\ph}^\mu\) on \(\ca{\M}\) by setting \(\ca{\ph}^\mu = \h{\ph}^\mu\) outside \(\ca{W}_{\bf a}\) for each \({\bf a}\in\fr{A}\) and \(\ca{\ph}^\mu = \ca{\ze}^\mu\) on each \(\ca{W}_{\bf a}\).

Now let \(\overset{\circ}{\M} = \ca{\M}\lt\osr\coprod_{{\bf a}\in\fr{A}}\ca{U}_{\bf a}\rt.\). Then on \(\overset{\circ}{\M}\) one has \(\ca{\ph}^\mu = \h{\vph}^\mu\) and hence \(vol_{\ca{\ph}^\mu} = \mu^2\th^{1...7}\) by \pref{UB-Md-Prop}.  Hence, one may compute that:
\ew
\CH\lt(\ca{\ph}^\mu\rt) \ge \bigintsss_{\overset{\circ}{\M}} vol_{\ca{\ph}^\mu} = \mu^2\bigintsss_{\overset{\circ}{\M}} \th^{1...7} \to \infty \text{ as } \mu \to \infty.
\eew

Thus, the proof of \tref{FFKM-UA} is completed by the following result:
\begin{Prop}
Let \(\lt(\ca{\M},\ca{\ph}\rt)\) be as defined in \tref{FFKMconstruction} and let \(\ca{\ph}^\mu\) be as defined above.  Then:
\ew
\lt[\ca{\ph}^\mu\rt] = \lt[\ca{\ph}\rt] \in \dR{3}\lt(\ca{\M}\rt) \text{ for all } \mu\ge 1.
\eew
\end{Prop}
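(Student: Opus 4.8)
The plan is to exhibit an explicit $2$-form $\Psi^\mu$ on $\ca{\M}$ with $\dd\Psi^\mu = \ca{\ph}^\mu - \ca{\ph}$; since both $3$-forms are closed this gives $[\ca{\ph}^\mu] = [\ca{\ph}]$. Throughout, $\ca{\ph}$ and all the $\ca{\ph}^\mu$ are built with one and the same (small, fixed) $\ep$ and \EH\ parameter $t$, which is essential by \rref{Rk-on-Coh}. I would use the open cover $\ca{\M} = M_1 \cup M_2$, where $M_1 = \ca{\M}\osr\coprod_{\bf a}\ca{W}_{\bf a}$ — which as a set is just $\h{\M}\osr\coprod_{\bf a}\h{W}_{\bf a}$, thickened slightly to be open — and $M_2 = \coprod_{\bf a}\ca{U}_{\bf a}$, with overlap the annuli $\ca{U}_{\bf a}\osr\ca{W}_{\bf a}$, build a primitive on each piece, and glue by a partition of unity. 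On $M_1$ one has $\ca{\ph}^\mu = \h{\ph}^\mu$ and $\ca{\ph} = \h{\ph}$; substituting $\vph^\mu - \vph = (\mu^6-1)\th^{123} = \dd[(\mu^6-1)\th^{25}]$ (recall $\dd\th^{25} = \th^{123}$, both $\mc{I}$-invariant) into \eref{ph-mu} and \eref{h-ph} gives, on all of $\h{\M}$,
\[
\h{\ph}^\mu - \h{\ph} \;=\; (\mu^6-1)\th^{123} + \dd\Theta, \qquad \Theta \;=\; \sum_{\bf a}\lt(1 - f(r/\ep)\rt)\tfrac{1}{2}\lt(y^1\rt)^2\dd y^{47},
\]
the ${\bf a}$-th summand supported in $\h{U}_{\bf a}$ and extended by zero; hence $\Psi_1 := (\mu^6-1)\th^{25} + \Theta$ is a primitive on $M_1$.

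On a cap $\ca{U}_{\bf a}$ I would use that $\ca{\ph}^\mu = \mu^{-3}\lt(\fr{H}^\mu\rt)^*\ze + \mu^{-6}\lt(\fr{H}^\mu\rt)^*\si$ on $\ca{W}_{\bf a}$ (from \eref{tldze} and $\ze^\mu = \ze + \mu^{-3}\si$), whereas $\ca{\ph} = \ze$ there. Since $\fr{H}^\mu$ rescales $\dd y^3$ by $\mu^3$, fixes $\dd y^4, \dd y^7$ and satisfies $\lt(\fr{h}^\mu\rt)^*\tld{\Om} = \mu^3\tld{\Om}$, every term of $\mu^{-3}\lt(\fr{H}^\mu\rt)^*\ze$ cancels against $\ze$ except one, so that on $\ca{W}_{\bf a}$
\[
\ca{\ph}^\mu - \ca{\ph} \;=\; \dd y^3 \w \lt[\lt(\fr{h}^\mu\rt)^*\ca{\om}_t - \ca{\om}_t\rt] \;+\; \mu^{-6}\lt(\fr{H}^\mu\rt)^*\si.
\]
The crucial point is that $\fr{h}^\mu : \tld{X}\to\tld{X}$ is isotopic to the identity (interpolate through the self-maps of $\tld{X}$ extending $w^1 \mt \mu^{3s}w^1$, $s\in[0,1]$); since $\tld{X} = \T^*\bb{CP}^1$ deformation-retracts onto the exceptional $\bb{CP}^1$ and thus $H^2(\tld{X};\bb{R})\cong\bb{R}$, the pullback $\lt(\fr{h}^\mu\rt)^*$ acts trivially on $H^2(\tld{X};\bb{R})$, so $\lt(\fr{h}^\mu\rt)^*\ca{\om}_t - \ca{\om}_t = \dd\tau_{\bf a}$ for a $1$-form $\tau_{\bf a}$ on $\tld{X}$. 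Using $H^1(\tld{X};\bb{R})= 0$ and that $\ca{\om}_t = \h{\om}$ near $\del\tld{X}_{\ep/2}$ (\eref{EH-interp}), $\tau_{\bf a}$ may be normalised to equal $(\mu^6-1)y^1\dd y^2$ there. Together with the manifestly exact $\mu^{-6}\lt(\fr{H}^\mu\rt)^*\si$ this yields an explicit primitive $\Psi_{\bf a}$ of $\ca{\ph}^\mu - \ca{\ph}$ on $\ca{U}_{\bf a}$, which one checks equals $(\mu^6-1)\,y^1\dd y^{23} + \lt(1 - f(r/\ep)\rt)\tfrac{1}{2}\lt(y^1\rt)^2\dd y^{47}$ on the annulus $\ca{U}_{\bf a}\osr\ca{W}_{\bf a}$.

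Finally I would patch. On the overlap, using $\Ph_{\bf a}^*\th^{25} = \dd y^{25} + y^1\dd y^{23}$, the two primitives differ by the exact form $\Psi_1 - \Psi_{\bf a} = (\mu^6-1)\dd y^{25} = \dd[(\mu^6-1)y^2\dd y^5]$; a partition of unity subordinate to $\{M_1, M_2\}$ then splices $\Psi_1$ and the $\Psi_{\bf a}$ (correcting each by $\dd$ of a cut-off multiple of $(\mu^6-1)y^2\dd y^5$) into a global $\Psi^\mu$ with $\dd\Psi^\mu = \ca{\ph}^\mu - \ca{\ph}$, whence $[\ca{\ph}^\mu] = [\ca{\ph}]$. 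I expect this last step to be the main obstacle: the naive bulk primitive $(\mu^6-1)\th^{25}$ does not extend over the exceptional divisors (\rref{Rk-on-Coh}), so the local primitives genuinely disagree on overlaps, and one must verify the discrepancy is exact on the fixed-size annuli $\ca{U}_{\bf a}\osr\ca{W}_{\bf a}$ uniformly in $\mu$. This is exactly what dictates the features of the construction — holding $\ep$ and $t$ fixed as $\mu$ varies, treating $y^1\dd y^{147}$ separately from the rest of $\h{\vph}^\mu - \h{\xi}^\mu$ (its well-behaved primitive $\tfrac{1}{2}\lt(y^1\rt)^2\dd y^{47}$ becoming usable only after the $\fr{H}^\mu$-rescaling, cf.\ the remark following \lref{quadlem}), and cutting off near the exceptional divisor at a $\mu$-dependent radius so that $\si$ stays uniformly bounded in the relevant metrics.
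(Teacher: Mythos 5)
Your proposal is correct and follows essentially the same route as the paper: region-by-region primitives, homotopy invariance of de Rham pullback to write \((\fr{h}^\mu)^*\ca{\om}_t - \ca{\om}_t = \dd\ta\), explicit boundary values for the local primitives, and a cut-off correction by a primitive of \((\mu^6-1)\dd y^{25}\) on the annuli \(\ca{U}_{\bf a}\osr\ca{W}_{\bf a}\) — the paper merely pins down \(\ta\) near \(\del\ca{W}_{\bf a}\) by an explicit chain-homotopy computation (obtaining \(\tfrac{\mu^6-1}{2}(y^1\dd y^2 - y^2\dd y^1)\)) rather than by your normalisation argument. The one imprecision is that the normalisation requires vanishing of \(H^1\) of a collar of \(\del\tld{X}_{\frac{1}{2}\ep}\) (which retracts to \(\bb{RP}^3\), so its real \(H^1\) is zero), not of \(H^1\lt(\tld{X};\bb{R}\rt)\); this is immediately fixable and does not affect the argument.
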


\begin{proof}
It suffices to prove that the difference \(\ca{\ph}^\mu - \ca{\ph}\) is exact for each \(\mu\ge1\). The strategy is to prove that \(\ca{\ph}^\mu - \ca{\ph}\) is exact on each of the regions:
\begin{itemize}
\item \(\ca{W}_{\bf a}\) for \({\bf a}\in\fr{A}\);
\item \(\lt.\ca{\M}\m\osr\coprod_{{\bf a}\in\fr{A}}\ca{U}_{\bf a}\rt.\);
\item \(\lt.\ca{U}_{\bf a}\m\osr\ca{W}_{\bf a}\rt.\) for \({\bf a}\in\fr{A}\),
\end{itemize}
and then to verify that the primitives may be combined to define a global primitive on all of \(\ca{\M}\).\\

\noindent\ul{\(\ca{W}_{\bf a}\) for \({\bf a}\in\fr{A}\)}:\ Recall the map \(\fr{h}^\mu\) defined above. One can verify that:
\ew
\ca{\ze}^\mu = \dd y^{347} + \dd y^3 \w \lt(\fr{h}^\mu\rt)^*\ca{\om}_t - \dd y^4 \w \fr{Re}\tld{\Om} + \dd y^7 \w \fr{Im} \tld{\Om} + \dd\lt[f\lt(\frac{2\lt(\fr{h}^\mu\rt)^*r}{\ep}\rt)\cdot\frac{1}{2}\lt(y^1\rt)^2\dd y^{47}\rt]
\eew
and thus:
\ew
\ca{\ph}^\mu - \ca{\ph} =   \dd y^3 \w \lt[\lt(\fr{h}^\mu\rt)^*\ca{\om}_t - \ca{\om}_t\rt] + \dd\lt[f\lt(\frac{2\lt(\fr{h}^\mu\rt)^*r}{\ep}\rt)\cdot\frac{1}{2}\lt(y^1\rt)^2\dd y^{47}\rt] \text{ on } \ca{W}_\textbf{a}
\eew
The second term is manifestly exact. For the first term, recall the Generalised Poincar\'{e} Lemma \cite[Prop.\ 17.10]{ItSM}:
\begin{Thm}
Let \(X\), \(Y\) be smooth manifolds, let
\begin{tikzcd}
X \ar[r, shift left, "f_1"] \ar[r, shift right, "f_2" '] & Y 
\end{tikzcd}
be smooth maps and let \(F:f_1\Rightarrow f_2\) be a smooth homotopy. Then the maps
\begin{tikzcd}
\dR{\pt}(Y) \ar[r, shift left,  "f_1^*"] \ar[r,  shift right, "f_2^*" '] & \dR{\pt}(X)
\end{tikzcd}
are equal.
\end{Thm}

\noindent Since \(\fr{h}^\mu\) is homotopic to the identity on \(\tld{X}\), it follows that \(\lt(\fr{h}^\mu\rt)^*\ca{\om}_t - \ca{\om}_t = \dd \ta\) for some suitable \(\ta\). Thus, on the region \(\ca{W}_{\bf a}\), one finds that:
\ew
\ca{\ph}^\mu - \ca{\ph} &= \dd\lt[\ta \w \dd y^3 + f\lt(\frac{2\lt(\fr{h}^\mu\rt)^*r}{\ep}\rt)\cdot\frac{1}{2}\lt(y^1\rt)^2\dd y^{47}\rt]\\
&= \dd\vpi.
\eew

In order to extend \(\vpi\) to all of \(\ca{\M}\) below, it is necessary to compute \(\vpi\) explicitly near the boundary of \(\ca{W}_{\bf a}\). For the second term in \(\vpi\), since \(\lt(\fr{h}^\mu\rt)^*(r)\ge r\), one finds that:
\ew
f\lt(\frac{2\lt(\fr{h}^\mu\rt)^*r}{\ep}\rt)\cdot\frac{1}{2}\lt(y^1\rt)^2\dd y^{47} = \frac{1}{2}\lt(y^1\rt)^2\dd y^{47} \text{ near the boundary of } \ca{W}_{\bf a}.
\eew
For the first term in \(\vpi\), recall that the Generalised Poincar\'{e} Lemma stated above may be proved by constructing an explicit chain homotopy
\bcd
\Om^\pt(Y) \ar[r, bend left = 20, "f_1^*"{name=top}] \ar[r, bend right = 20, "f_2^*" '{name=bottom}] & \Om^\pt(X)
\ar[Rightarrow, shorten=1mm, from=top, to=bottom, "\fr{F}"]
\ecd
defined by:
\ew
\bcd[row sep = 0pt]
\fr{F}:\Om^\pt(Y) \ar[r]& \Om^{\pt-1}(X)\\
\om \ar[r, maps to]& \bigint_{[0,1]} \io_s^*\lt(\del_s\hk(F^*\om)\rt) \dd s
\ecd
\eew
and calculating that:
\ew
\dd\fr{F} + \fr{F}\dd = f_2^*-f_1^*,
\eew
where the homotopy \(F\) is viewed as a map \(F: [0,1] \x X \to Y\) and \(\io_s\) denotes the embedding \(X \cong \{s\} \x X \emb [0,1] \x X\).  Using the specific homotopy \(\fr{h}^{\sqrt[3]{1 + s\lt(\mu^3-1\rt)}} = F_s\) of \(\tld{X}\) connecting \(\Id\) to \(\fr{h}^\mu\), one may calculate that:
\ew
\ta = \frac{\mu^6 - 1}{2}\lt(y^1\dd y^2 - y^2\dd y^1\rt) \text{ near the boundary of } \ca{W}_{\bf a}.
\eew
Thus:
\e\label{BC1}
\vpi = \frac{\mu^6 - 1}{2}\lt(y^1\dd y^{23} - y^2\dd y^{13}\rt) +  \frac{1}{2}\lt(y^1\rt)^2\dd y^{47} \text{ near the boundary of } \ca{W}_{\bf a}.
\ee

\noindent\ul{\(\ca{\M}\lt\osr\coprod_{{\bf a}\in\fr{A}} \ca{U}_{\bf a}\rt.\)}:~ As discussed at the start of \sref{FFKMUBpf}, by \eref{FFKMform} one has:
\ew
\ca{\ph}^\mu - \ca{\ph} = \lt(\mu^6 - 1\rt)\th^{123} = \lt(\mu^6 - 1\rt)\dd\lt(\th^{25}\rt) = \dd\vpi.
\eew
Using \erefs{FFKM1forms}, \eqref{Ph0} and \eqref{Ph1}, one finds that:
\e\label{BC2}
\vpi = \lt(\mu^6 - 1\rt)\lt(\dd y^{25} + y^1 \dd y^{23}\rt) \text{ near the boundary of } \ca{U}_{\bf a}.
\ee

\noindent\ul{\(\ca{U}_{\bf a}\big\osr\ca{W}_{\bf a}\) for \({\bf a}\in\fr{A}\)}: Finally, using \erefs{ph-mu} and \eqref{h-ph}, one finds that on \(\ca{U}_{\bf a}\big\osr\ca{W}_{\bf a}\) for \({\bf a}\in\fr{A}\):
\ew
\ca{\ph}^\mu - \ca{\ph} = \lt(\mu^6 - 1\rt)\dd y^{123} + \dd\lt\{\lt[1-f\lt(\frac{r}{\ep}\rt)\rt]\lt(\tfrac{1}{2}\lt(y^1\rt)^2\dd y^{47}\rt)\rt\}.
\eew
Thus \(\ca{\ph}^\mu - \ca{\ph} = \dd\vpi\), where:
\ew
\vpi =&\ \frac{\mu^6-1}{2}\lt(y^1\dd y^{23} - y^2 \dd y^{13}\rt) + \lt(\mu^6-1\rt)\dd\lt(f\lt(\frac{r}{\ep}\rt)\lt(y^2\dd y^5 + \tfrac{1}{2}y^1y^2\dd y^3\rt)\rt)\\
& \hs{85mm} + \lt[1-f\lt(\frac{r}{\ep}\rt)\rt]\lt(\tfrac{1}{2}\lt(y^1\rt)^2\dd y^{47}\rt).
\eew
This satisfies:
\cag\label{BC3}
\vpi = \frac{\mu^6-1}{2}\lt(y^1\dd y^{23} - y^2 \dd y^{13}\rt) + \lt(\tfrac{1}{2}\lt(y^1\rt)^2\dd y^{47}\rt) \text{ near the boundary of } \ca{W}_{\bf a} \text{ and}\vs{3mm}\\
\vpi = \lt(\mu^6-1\rt)\lt(\dd y^{25} + y^1 \dd y^{23}\rt) \text{ near the boundary of } \ca{U}_{\bf a}.
\caag

Combining \erefs{BC1}, \eqref{BC2} and \eqref{BC3}, one sees that \(\vpi\) defines a smooth 2-form on all of \(\ca{\M}\) such that:
\ew
\ca{\ph}^\mu - \ca{\ph} = \dd\vpi,
\eew
as required.

\end{proof}

This completes the proof of \tref{FFKM-UA}.

\begin{Rk}\label{LF-Rk}
Recall that, for a closed \g\ 3-form \(\ph\), the Laplacian flow of \(\ph\) is the solution of the evolution PDE \cite[\S6]{RoG2S}:
\ew
\frac{\del \ph(t)}{\del t} = \De_{\ph(t)}\ph(t) = -\dd\Hs_{\ph(t)}\dd\Hs_{\ph(t)}\ph(t) \et \ph(0) = \ph.
\eew
Laplacian flow can be regarded as the gradient flow of \(\CH\) \cite[\S1.5]{LFfCG2S:STB}; in particular, \(\CH\) increases strictly along the flow.  Accordingly, Laplacian flow has been used in the literature to provide examples of 7-manifolds on which \(\CH\) is unbounded above; see, e.g., \cite[\S6]{RoG2S} and \cite[\S5]{ACoESttG2LF}.

The family \(\ph(\al,\be,\la;\mu)\) constructed in \sref{N-mfld} can also be interpreted via Laplacian flow.  Using \lref{scaling-lem}, \eref{dRI} and \eref{dSU2}, one may compute that:
\ew
-\dd\Hs_{\ph(\al,\be,\la;\mu)}\dd\Hs_{\ph(\al,\be,\la;\mu)}\ph(\al,\be,\la;\mu) = \frac{4\lt(\la\ol{\la}\rt)^\frac{2}{3}}{\al\mu^2} g^1 \w \om.
\eew
Allowing \(\mu = \mu(t)\) gives:
\ew
\frac{\del \ph(\al,\be,\la;\mu)}{\del t} = 6\mu^5\frac{\dd \mu}{\dd t} \al g^1 \w \om.
\eew
Thus \(\ph(\al,\be,\la;\mu(t))\) is a flow line of Laplacian flow starting from \(\ph(\al,\be,\la)\) if \(\mu\) satisfies the ODE:
\ew
\frac{\dd \mu}{\dd t} = \frac{2\lt(\la\ol{\la}\rt)^\frac{2}{3}}{3\al^2\mu^7} \et \mu(0) = 1.
\eew
It follows that the Laplacian flow starting from \(\ph(\al,\be,\la)\) exists for all \(t>0\) and is given by:
\ew
\ph\lt(\al,\be, \la; \sqrt[8]{\frac{16\lt(\la\ol{\la}\rt)^\frac{2}{3}t}{3\al^2} + 1}\rt).
\eew

In general, however, Laplacian flow can only be explicitly solved on manifolds with a high degree of symmetry, and thus cannot be used to investigate the unboundedness above of \(\CH\) on more complicated manifolds.  As a illustration, note that even at the level of the manifold \((\M,\vph)\):
\ew
\De_{\vph}\vph = -\dd \Hs_{\vph} \dd \Hs_\vph\vph = 2 \th^{123} + 2 \th^{145} - \th^{136} + \th^{127},
\eew
and thus the equation \(\frac{\del \vph_t}{\del t} = \De_{\vph_t}\vph_t\) cannot (to the author's knowledge) explicitly be solved starting from \(\vph_0 = \vph\).  However, \sref{FFKM} has shown that the scaling arguments described in \sref{UB-Md} can be applied successfully to prove the unboundedness above of \(\CH\) on \(\lt(\ca{\M},\ca{\ph}\rt)\).  This suggests that \pref{UB-Md-Prop} is a more widely applicable technique for proving the unboundedness above of \(\CH\) than Laplacian flow.
\end{Rk}

\section{The large volume limit of \(\lt(\ca{\M},\ca{\ph}\rt)\)}

The purpose of this section is to prove \tref{FFKM-CT}.  Essential to the proof is the main result of the author's recent paper \cite{AGCRfOvSF}.  I begin, therefore, by briefly recounting the relevant definitions therein; for further details, the reader may consult \cite{AGCRfOvSF} directly.

Let \(E\) be a closed orbifold.  The collection of smooth points of \(E\) (i.e.\ points with trivial orbifold group) comprise a smooth manifold denoted \(E({\bf 1})\), which is open and dense in \(E\).  More generally, given any isomorphism class \([\Ga]\) of finite groups, the connected components of the set \(E([\Ga])\) of all points in \(E\) with orbifold group representing the class \([\Ga]\) also comprise smooth manifolds.  These connected components partition \(E\); moreover \(E([\Ga])\) is non-empty for only finitely many classes \([\Ga]\), by compactness of \(E\).  This is the simplest example of a stratification of \(E\), and is termed the canonical stratification of \(E\).  More generally, a stratification of \(E\) is a partition of \(E\) into a finite number of disjoint, connected subsets \(\lt(E_i\rt)_i\) such that each \(E_i\) is a smooth submanifold of \(E([\Ga])\) for some \(\Ga\).

Fix a stratification \(\Si = \lt(E_i\rt)_i\) of \(E\).  A stratified Riemannian metric \(\h{g} = \{g_i\}_i\) on \(E\) is the data of a Riemannian metric \(g_i\) on each stratum \(E_i\) such that, for each \(i\), there exists a continuous orbifold Riemannian metric \(\ol{g}_i\) on \(E\) whose tangential component along \(E_i\) is \(g_i\).  Likewise, a stratified Riemannian semi-metric \(\h{g} = \{g_i\}_i\) is the data of a Riemannian semi-metric \(g_i\) on each stratum \(E_i\) satisfying the analogous condition that for each \(i\), there exists a continuous orbifold Riemannian semi-metric \(\ol{g}_i\) on \(\T E\) whose tangential component along \(E_i\) is \(g_i\).  A stratified distribution \(\mc{D}\) on \(E\) is a subbundle (in the orbifold sense) of the tangent bundle of \(E\) with the property that, for each \(i\): \(\mc{D} \cap \T E_i \cc \T E_i\) is a distribution in the usual manifold sense.  Given a stratified distribution \(\mc{D}\) and a \srsm\ \(\h{g}\), I term \(\h{g}\) regular \wrt\ \(\mc{D}\) if for each \(i = 0,...,n\) the bilinear form \(\ol{g}_i\) vanishes along \(\mc{D}\) but defines an inner product transverse to \(\mc{D}\).  (In particular, note that this implies that the kernel of the Riemannian semi-metric \(g_i\) as a bilinear form on \(E_i\) is precisely \(\mc{D}_i = \mc{D} \cap \T E_i\).)

In a similar way, a stratified quasi-Finslerian structure on \(E\) is the data of a continuous map \(\mc{L}_i: \T E_i \to [0,\0)\) for each \(i\) which is positively-homogeneous of degree 1 (i.e.\ for all \(\la \in \bb{R}\) and \(v \in \T E_i\): \(\mc{L}_i(\la v) = |\la| \mc{L}_i(v)\)) and positive definite (i.e.\ it vanishes precisely on the zero section of \(\T E_i\)) satisfying the property that for each continuous orbifold Riemannian metric \(h\) on \(\T E\), there exists a continuous function \(C_i:\ol{E_i} \to (0,\infty)\) for each \(i\) such that
\ew
\frac{1}{C_i} \|-\|_h \le \mc{L}_i \le C_i\|-\|h \on E_i.
\eew

As for usual \rmm s, both \srm s and \sqfs s can be integrated along piecewise-\(C^1\) curves in \(E\) and hence define natural metrics on the orbifold \(E\), whose induced topology coincides with the original topology on \(E\).  Likewise, a \srsm\ on \(E\) induces a semi-metric on \(E\) in the obvious way.

The following simple example of the above definitions, taken from \cite[Remark 4.8]{AGCRfOvSF}, will be of particular importance below.

\begin{Ex}\label{refinement}
Let \((E,\Si = \{E_i\}_i)\) be a stratified orbifold and let \(g\) be a Riemannian (semi-)metric on \(E\).  Setting \(g_i = g|_{E_i}\), where \(g|_{E_i}\) denotes the tangential component of \(g\) along \(E_i\), the data \(\h{g} = (g_i)_i\) defines a stratified Riemannian (semi-)metric on \(E\).  Moreover, the (semi-)metrics on \(E\) induced by \(g\) and \(\h{g}\) coincide.
\end{Ex}

Return now to the manifold \(\ca{\M}\).  Recall that there is a natural fibration \cite[\S9]{ACG2CMwFBNb1eq1}:
\ew
\bcd[row sep = 0pt]
q:\M \ar[r] & \bb{T}^3\\
\Ga\1(x^1,...,x^7) \ar[r, maps to] & \lt(\frac{x^1}{2} + \bb{Z},x^2 + \bb{Z}, x^3 + \bb{Z}\rt)
\ecd
\eew
with (non-calibrated coassociative) fibres diffeomorphic to \(\bb{T}^4\). Let \(\mc{I}\) denote the involution of \(\M\) defined in \eref{FFKMInv} and define a non-free involution \(\fr{I}\) of \(\bb{T}^3\) by acting on the first two factors of \(\bb{T}^3\) by \(-\Id\) and on the final factor by \(\Id\). Then \(q \circ \mc{I} = \fr{I} \circ q\) and so \(q\) descends to define a singular fibration:
\ew
\h{q}:\widehat{\M} \to \lqt{\bb{T}^3}{\fr{I}} = \lt(\lqt{\bb{T}^2}{\{\pm1\}}\rt) \x S^1 =B,
\eew
with \(\lqt{\bb{T}^2}{\{\pm1\}}\) being homeomorphic (although obviously not diffeomorphic) to \(\bb{CP}^1\).  The fibres of \(\h{q}\) are all path-connected, the generic fibres being 4-tori and the fibres over the singular locus of \(B\) being diffeomorphic to \(\lt(\lqt{\bb{T}^2}{\{\pm1\}}\rt) \x \bb{T}^2\).  Combining \(\h{q}\) with the natural `blow-down' map \(\rh:\ca{\M} \to \widehat{\M}\) similarly yields a fibration \(\pi\) of \(\ca{\M}\) over \(B\):
\ew
\bcd
\ca{\M} \ar[r, ^^22\rh^^22] \ar[rd, ^^22\pi^^22] & \h{\M} \ar[d, ^^22\h{q}^^22]\\
 & \lt(\lqt{\bb{T}^2}{\{\pm1\}}\rt) \x S^1.
\ecd
\eew

Away from the exceptional locus of \(\ca{\M}\), the map \(\pi\) is a smooth surjection with fibre \(\bb{T}^4\).  Near the exceptional locus, the map \(\pi\) is modelled on:
\ew
\tld{X} \x \bb{T}^3 \to \lt(\lqt{\bb{C}^2}{\{\pm1\}}\rt) \x \bb{T}^3 \oto{proj_1 \x proj_1} \lt(\lqt{\bb{C}}{\{\pm1\}}\rt) \x S^1,
\eew
where \(\tld{X}\) is the blow-up of \(\lqt{\bb{C}^2}{\{\pm1\}}\) at the origin as in \sref{FFKM-constr}.  The fibre of \(\tld{X} \to \lqt{\bb{C}^2}{\{\pm1\}} \to \lqt{\bb{C}}{\{\pm1\}}\) over 0 is the union of the proper transform of \(\lqt{(\{0\}\x\bb{C})}{\{\pm1\}}\) -- denoted \(\lt(\lqt{(\{0\}\x\bb{C})}{\{\pm1\}}\rt)_{PT}\) -- and the exceptional divisor \(\bb{CP}^1\) intersecting transversally at a single point; hence for each \(y^3 \in S^1\), the fibre of \(\pi\) over \(\{0\} \x \{y^3\}\) is the union of \(\lt(\lqt{(\{0\}\x\bb{C})}{\{\pm1\}}\rt)_{PT} \x \{y^3\} \x \bb{T}^2\) and \(\bb{CP}^1 \x \{y^3\} \x \bb{T}^2\) intersecting transversally along a single copy of \(\bb{T}^2\). It follows that the singular fibres of \(\pi\) are homeomorphic to four copies of \(\bb{CP}^1 \x \bb{T}^2\) intersecting a fifth copy of \(\bb{CP}^1 \x \bb{T}^2\) transversally along four distinct copies of \(\bb{T}^2\).\footnote{Note that \cite[p.\ 35]{ACG2CMwFBNb1eq1} contains an error in its description of the singular fibres of \(\pi\), of which the authors of \cite{ACG2CMwFBNb1eq1} have been informed.}

Using the above terminology, I now state the following, refined, version of \tref{FFKM-CT}:

\begin{Thm}\label{FFKM-CT2}
Let \(\lt(\ca{\M},\ca{\ph}^\mu\rt)_{\mu \in [1,\infty)}\) be the family constructed in the proof of \tref{FFKM-UA}.  Then the large volume limit of \(\lt(\ca{\M}, \ca{\ph}^\mu\rt)\) corresponds to an adiabatic limit of the fibration \(\pi\).  Specifically:
\ew
\lt(\ca{\M}, \mu^{-6}\ca{\ph}^\mu\rt) \to \lt(B,\h{\mc{L}}\rt) \as \mu \to \infty
\eew
in the Gromov--Hausdorff sense, where \(\h{\mc{L}}\) is the \sqfs\ on \(B\) (\wrt\ the canonical stratification of \(B\)) defined below in \eref{mcL} and is, in particular, Euclidean outside a neighbourhood of the singular locus of \(B\).
\end{Thm}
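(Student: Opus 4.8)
The plan is to deduce \tref{FFKM-CT2} from the general orbifold collapsing theorem of \cite{AGCRfOvSF} underlying \tref{Cvgce-Thm}, but applied in full strength: here the base $B$ is an orbifold, the projection $\pi$ is a submersion only away from the exceptional locus (its fibres over the singular locus of $B$ being the transverse unions of copies of $\bb{CP}^1 \x \bb{T}^2$ described above), and the limiting object is merely a \sqfs.  The first task is to compute the rescaled metrics $g^\mu := g_{\mu^{-6}\ca{\ph}^\mu}$ on $\ca{\M}$ explicitly, region by region, using \lref{scaling-lem}, and thence to read off the candidate \sqfs\ $\h{\mc L}$ on $B$ (with respect to its canonical stratification).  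I would use the decomposition $\ca{\M} = \overset{\circ}{\M} \cup \coprod_{\bf a}\lt(\ca{U}_{\bf a}\osr\ca{W}_{\bf a}\rt) \cup \coprod_{\bf a}\ca{W}_{\bf a}$, on which $\ca{\ph}^\mu$ equals $\h{\vph}^\mu$, $\h{\ph}^\mu$ and $\ca{\ze}^\mu$ respectively.

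On $\overset{\circ}{\M}$ this is immediate: solving \eref{lin-sys} with $(\la_1,\dots,\la_7) = (\mu^6,1,\dots,1)$ (cf.\ the analogous computation \eref{met-expr}) gives $g_{\vph^\mu} = \mu^4\big((\th^1)^{\ts2} + (\th^2)^{\ts2} + (\th^3)^{\ts2}\big) + \mu^{-2}\big((\th^4)^{\ts2} + \cdots + (\th^7)^{\ts2}\big)$, and since $\ph \mapsto c\ph$ rescales the induced metric by $c^{2/3}$, we obtain $g^\mu = (\th^1)^{\ts2} + (\th^2)^{\ts2} + (\th^3)^{\ts2} + \mu^{-6}\big((\th^4)^{\ts2} + \cdots + (\th^7)^{\ts2}\big)$, which converges uniformly to, and everywhere dominates, $\h{q}^*$ of the flat metric on $B$ determined by $\th^1,\th^2,\th^3$; this is the ``Euclidean'' part of $\h{\mc L}$.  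On $\ca{U}_{\bf a}\osr\ca{W}_{\bf a}$ the same holds up to the correction terms $y^1\dd y^{147}$ and $\dd\h{\al}_{\bf a}$ of \pref{almostprod}, which \lref{quadlem} bounds in $g_{\xi^\mu}$-norm by $O(\ep)$ uniformly in $\mu$; by stability of \g\ 3-forms the rescaled metrics again converge, differing from the flat metric only by the controlled contribution of these terms.

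The crux is the region $\ca{W}_{\bf a}$, where $\ca{\ph}^\mu = \ca{\ze}^\mu = \mu^{-3}\lt(\fr{H}^\mu\rt)^*\ze^\mu$.  The plan is: (i) use the homogeneity identities $\lt(\fr{h}^\mu\rt)^*\tld{\Om} = \mu^3\tld{\Om}$ and $\lt(\fr{H}^\mu\rt)^*\dd y^3 = \mu^3\dd y^3$, together with the Calabi--Yau normalisation $2\ca{\om}_t^2 = \tld{\Om}\w\ol{\tld{\Om}}$ (valid both where $\ca{\om}_t = \h{\om}$ and where $\ca{\om}_t = \tld{\om}_t$, hence everywhere up to an error controlled by $\ep$), to reduce $\ca{\ze}^\mu$ to $\dd y^{347} + \dd y^3\w\lt(\fr{h}^\mu\rt)^*\ca{\om}_t - \dd y^4\w\fr{Re}\tld{\Om} + \dd y^7\w\fr{Im}\tld{\Om}$ plus an $O(\mu^{-3})$ error from $\si$; (ii) apply \lref{scaling-lem}(2) with $\nu = \mu^3$ to get $g_{\ca{\ze}^\mu} = \mu^4(\dd y^3)^{\ts2} + \mu^{-2}\big[(\dd y^4)^{\ts2} + (\dd y^7)^{\ts2}\big] + \mu^{-2}\lt(\fr{h}^\mu\rt)^*g_{\ca{\om}_t}$, so that after the $\mu^{-4}$ rescaling the $y^4, y^7$ directions collapse, the $\dd y^3$ direction survives as the pullback of the $S^1$-factor of $B$, and the whole remaining content is carried by $\mu^{-6}\lt(\fr{h}^\mu\rt)^*g_{\ca{\om}_t}$; (iii) analyse this last family of degenerating metrics: where $\ca{\om}_t = \h{\om}$, the pullback $\lt(\fr{h}^\mu\rt)^*$ multiplies the $w^1$-block by $\mu^6$, which the $\mu^{-6}$ rescaling exactly compensates, giving the flat metric $\lt|\dd w^1\rt|^2$ on the $\lqt{\bb{C}}{\{\pm1\}}$-factor of $B$ in the limit; where $\ca{\om}_t = \tld{\om}_t$ is the \EH\ metric, one must carry out the adiabatic-limit analysis of the model fibration $\tld{X} \to \lqt{\bb{C}^2}{\{\pm1\}} \to \lqt{\bb{C}}{\{\pm1\}}$ under the anisotropic rescaling $\fr{h}^\mu$, and it is precisely this that produces the genuinely quasi-Finslerian behaviour of $\h{\mc L}$ on a neighbourhood of the singular locus, reflecting the fixed-size \EH\ $\bb{CP}^1$ over each singular point of $B$.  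This step --- extracting the explicit description of $\h{\mc L}$ near the singular locus (as in \eref{mcL}) and obtaining the uniform estimates on $g^\mu$ needed to feed into the collapsing theorem --- is the one I expect to be the main obstacle.

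Finally I would assemble the local descriptions into a single \sqfs\ $\h{\mc L}$ on $B$: check that they agree on the overlaps of the three regions (using \exref{refinement} on the smooth stratum, and the explicit Finslerian data on the $1$-dimensional singular stratum), verify that $\h{\mc L}$ is sandwiched between fixed multiples of a continuous orbifold \rmm, and verify the hypotheses of the general collapsing theorem of \cite{AGCRfOvSF}: that $g^\mu$ converges, off arbitrarily small neighbourhoods of the singular fibres, to a family whose fibre parts collapse and whose transverse parts converge to $\h{\mc L}$; that there is a lower bound $g^\mu \ge \La_\mu^2\,\pi^*(\cdots)$ with $\La_\mu \to 1$ (ensured by the non-negativity of the collapsing terms in each of the three regions); and that every fibre of $\pi$ is path-connected (already observed above).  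The theorem then gives $\lt(\ca{\M}, g^\mu\rt) \to \lt(B, \h{\mc L}\rt)$ in the \GH\ sense, which is \tref{FFKM-CT2}; \tref{FFKM-CT} follows on taking $d$ to be the distance function of $\h{\mc L}$.
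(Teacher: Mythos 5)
Your global strategy --- computing \(g^\mu = g_{\mu^{-6}\ca{\ph}^\mu}\) region by region via \lref{scaling-lem} and feeding the result into the collapsing theorem of \cite{AGCRfOvSF} --- is indeed the paper's, and your treatment of the regions away from the exceptional locus is essentially correct (on \(\ca{\M}_{int}\) the limit is not flat but only \(O(\ep)\)-close to flat, which is consistent with what you say).  The problem is that the step you explicitly defer as ``the main obstacle'' is the actual content of the proof, and your sketch of how it would go would not produce the correct limit.  First, the theorem of \cite{AGCRfOvSF} is not invoked in the naive form ``uniform convergence plus a lower bound \(g^\mu \ge \La_\mu^2\pi^*(\cdots)\)'': uniform convergence of \(\h{g}^\mu\) is only ever established on the complements \(\ca{\M}^{(k)}\) of the shrinking neighbourhoods \(\ca{W}_{{\bf a},k}\) of the exceptional locus, and on those neighbourhoods it is replaced by the quantitative hypotheses \condsref{CT-Cond}(3)--(5): decay (as \(\mu\to\infty\), then \(k\to\infty\)) of the \(d^\mu\)- and \(d^\infty\)-diameters of the pieces \(\ca{\fr{f}}_{\bf a}^{-1}(\{p\})\cap\ca{W}_{{\bf a},k}\), and the boundary comparison \(\sup_{\del_{{\bf a},k}}\lt|d^\mu - d^\infty\rt|\to 0\).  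None of these estimates appear in your outline, and they account for most of the work.

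More seriously, steps (i)--(iii) misidentify both the computation and the geometric source of the quasi-Finslerian behaviour.  The normalisation \(2\ca{\om}_t^2 = \tld{\Om}\w\ol{\tld{\Om}}\) is \emph{not} valid ``everywhere up to an error controlled by \(\ep\)'': on the gluing annulus between \(\tld{\om}_t\) and \(\h{\om}\) it necessarily fails by a definite amount, and this failure is precisely what determines \(\h{\mc{L}}\) near the singular locus.  The paper re-constructs the interpolating form so that \(\ca{\om}_t^2 \ge 2\up^2 vol_0\) with a sharp constant \(\up\in(0,1)\) \emph{independent of} \(t\) (\pref{tld-om-const}); correspondingly, in \lref{scaling-lem}(2) the constant \(\nu\) is a pointwise function with values in \([\up,1]\), not your \(\nu=\mu^3\) (the \(\mu\)-dependence enters only through \(\fr{H}^\mu\) and the prefactor \(\mu^{-3}\)), so that, up to the \(O(\mu^{-3})\) contribution of \(\si\), the coefficient of \(\lt(\dd y^3\rt)^{\ts2}\) in \(g^\mu\) on \(\ca{W}_{\bf a}\) is \(\lt(\nu\circ\fr{h}^\mu\rt)^{4/3}\), dipping to \(\up^{4/3}\) on the locus \(\{r=\fr{r}_t\}\).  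It is this dip --- not the fixed-size exceptional \(\bb{CP}^1\), which is collapsed by \condsref{CT-Cond}(3) --- that forces the limiting stratified semi-metric on the singular strata of \(\h{\M}\) to be \(\up^{4/3}\lt(\dd y^3\rt)^{\ts2}\) rather than \(\lt(\dd y^3\rt)^{\ts2}\), so that \(\h{\mc{L}}\) of \eref{mcL} equals \(\up^{2/3}\) times the Euclidean norm along the singular circles of \(B\).  Verifying \condsref{CT-Cond}(5) then requires both the global bound \(g^\mu \ge \lt(1 - C\De_0\mu^{-3}\rt)\up^{4/3}\ca{\fr{f}}^*g_\E\) (\eref{full-bd}), which itself depends on \(\up\) being \(t\)- and \(\ep\)-independent, and explicit short paths through \(\{r = \fr{r}_t\}\) realising the shortened distance \(\up^{2/3}d_\E\).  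Without \pref{tld-om-const} and these diameter/boundary estimates your argument cannot even state the correct \(\h{\mc{L}}\), let alone prove Gromov--Hausdorff convergence to it.
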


Consider the map \(\fr{f}:\M \to S^1\) given by:
\ew
\Ga\1(x^1,...,x^7) \mt x^3 + \bb{Z}.
\eew
Then \(\fr{f}\) descends to a map \(\h{\fr{f}}: \h{\M} \to S^1\).  Define \(\ca{\fr{f}} = \h{\fr{f}} \circ \rh: \ca{\M} \to S^1\) and, for each \({\bf a} \in \fr{A}\), define:
\ew
\ca{\fr{f}}_{\bf a} = \ca{\fr{f}}|_{\ca{W}_{\bf a}} \et \h{\fr{f}}_{\bf a} = \h{\fr{f}}|_{\h{W}_{\bf a}}.
\eew
Explicitly, the maps \(\ca{\fr{f}}_{\bf a}\) and \(\h{\fr{f}}_{\bf a}\) may be described as follows: writing \(\h{W}_{\bf a} \cong T_{y^3,y^4,y^7} \x \lt(\lqt{\B^4_{\frac{\ep}{2}}}{\{\pm1\}}\rt)\), one finds:
\ew
\h{\fr{f}}_{\bf a} : \h{W}_{\bf a} &\to S^1\\
\lt(y^i\rt) &\mt y^3
\eew
and similarly for \(\ca{\fr{f}}_{\bf a}\).

Now recall the map \(\h{q}: \h{\M} \to B\).  Since \(\h{q}\) is induced by the submersive map \(q: \M \to \bb{T}^3\), it provides an example of a weak submersion in the sense of \cite[Defn.\ 3.3]{AGCRfOvSF}.  In particular, the map \(\h{q}\) induces a natural stratification \(\Si\) on \(\h{\M}\) by `pulling back' the canonical stratification on \(B\): explicitly, the strata of \(\Si\) consist firstly of the pre-image under \(\h{q}\) of the smooth locus of \(B\), i.e.\ the collection of all smooth fibres of the map \(\h{q}\).  Secondly, it consists of the smooth locus of the four singular fibres of \(\h{q}\).  Finally, it consists of the 16 components of the singular locus of \(\h{\M}\). By pulling \(\Si\) back along the blow-down map \(\rh: \ca{\M} \to \h{\M}\), one obtains a stratification of \(\ca{\M}\), say \(\Si'\).  Explicitly, the stratification \(\Si'\) consists of firstly the collection of all smooth fibres of \(\ca{q}\), secondly the four singular fibres of \(\ca{q}\) with their exceptional loci removed, and thirdly the 16 exceptional loci of \(\ca{\M}\).

Now let \(g^\mu\) be the Riemannian metric on \(\ca{\M}\) induced by the \g\ 3-form \(\mu^{-6}\ca{\ph}\) and write \(\h{g}^\mu\) for the induced stratified Riemannian metric on \(\ca{\M}\) as in \exref{refinement}.  Write \(d^\mu\) for the metric on \(\ca{\M}\) induced by \(\h{g}^\mu\).  For all \(k \in [1,\infty)\), consider the space:
\ew
\ca{\M}^{(k)} = \lt.\ca{\M}\m\osr \coprod_{{\bf a} \in \fr{A}} \ca{W}_{{\bf a},k}\rt.
\eew
where \(\ca{W}_{{\bf a},k}\) was defined in \sref{FFKMUBpf} and write \(\h{W}_{{\bf a},k} = \rh\lt(\ca{W}_{{\bf a},k}\rt)\).  Write \(\mc{D}\) for the distribution over \(\h{\M}\) given by \(\ker\dd\h{q}\) and note that \(\mc{D}\) is stratified \wrt\ \(\Si\).  Suppose that there exists a stratified Riemannian semi-metric \(\h{g}^\infty\) on \(\h{\M}\) such that the following five conditions hold:\\

\begin{Conds}\label{CT-Cond}~
\begin{enumerate}
\item \(\h{g}^\infty\) is regular \wrt\ \(\mc{D}\);

\item  On each \(\ca{\M}^{(k)}\), \(k \in [1,\0)\):
\ew
\h{g}^\mu \to \h{g}^\infty \text{ uniformly as } \mu \to \infty,
\eew
and there exist constants \(\La_\mu(k) \ge 0\) such that:
\e\label{La-cvgce}
\lim_{\mu \to \infty} \La_\mu(k) = 1 \et \h{g}^\mu \ge \La_\mu(k)^2 \h{g}^\infty \text{ for all } \mu \in [1,\infty),
\ee
where \(\h{g}^\infty\) is regarded as a stratified Riemannian semi-metric on \(\ca{\M}^{(k)}\) using the blow-up map \(\rh\);

\item
\ew
\lim_{k \to \infty} ~ \limsup_{\mu \to \infty} ~\max_{{\bf a} \in \fr{A}} ~ \sup_{p \in S^1} ~ \diam_{d^\mu} \lt[ \ca{\fr{f}}_{\bf a}^{-1}(\{p\}) \cap \ca{W}_{{\bf a},k} \rt] = 0;
\eew

\item
\ew
\lim_{k \to \infty} ~ \max_{{\bf a} \in \fr{A}} ~ \sup_{p \in S^1} ~ \diam_{d^\infty}\lt[ \h{\fr{f}}_{\bf a}^{-1}(\{p\}) \cap \h{W}_{{\bf a},k} \rt] = 0;
\eew

\item
\ew
\lim_{k \to \infty} ~ \limsup_{\mu \to \infty} ~\max_{{\bf a} \in \fr{A}} ~ \sup_{\del_{{\bf a},k}} \lt| d^\mu - d^\infty \rt| = 0,
\eew
where \(d^\infty\) is the semi-metric on \(\h{\M}\) induced by \(\h{g}^\infty\) (regarded as a semi-metric on \(\del_{{\bf a},k}\) using the identification \(\rh\)) and, for simplicity of notation, I write \(\del_{{\bf a},k}\) for the subset \(\del \ca{W}_{{\bf a},k} \overset{\rh}{\cong} \del \h{W}_{{\bf a},k}\).
\end{enumerate}
\end{Conds}

Given such a \(\h{g}^\0\), \tref{FFKM-CT2} is a direct application of the main result (Theorem 5.3) of \cite{AGCRfOvSF}, with \(\h{\mc{L}} = (\mc{L}_i)\) given explicitly as follows:  fix a stratum \(B_i\) in the canonical stratification of \(B\) and write \(\pi^{-1}(B_i) = \bigcup_{j = 0}^k \h{\M}_j\), where each \(\h{\M}_j\) is a stratum in the induced stratification \(\Si\) of \(\h{\M}\).   Then given \(p\in B_i\) and \(u \in \T_pB_i\), define
\e\label{mcL}
\mc{L}_i(u) = \min_{j = 0}^k ~ \inf \lt\{\|u'\|_{g^\infty_j}~\m|~u' \in \T_x \h{\M}_j \text{ such that } \dd\h{q}(u') = u \text{ (and, in particular, } \h{q}(x) = p)\rt\}.
\ee
(For comparative purposes, the reader may wish to note that \(\ca{M}\), \(\h{\M}\) and \(\rh\) above correspond to \(E_1\), \(E_2\) and \(\Ph\) in \cite{AGCRfOvSF} respectively, and that \(\ca{W}_{{\bf a},k}\), \(\h{W}_{{\bf a},k}\) above correspond respectively to \(U_1^{\lt(k^{-1}\rt)}(j)\), \(U_2^{\lt(k^{-1}\rt)}(j)\) in \cite{AGCRfOvSF}, the rest of the notation being obviously equivalent.)

The remainder of this paper, therefore, will be devoted to establishing the five conditions in \condsref{CT-Cond}.

\subsection{Bounding the volume form induced by \(\ca{\om}_t\)}

Recall the K\"{a}hler forms \(\ca{\om}_t\) interpolating between \(\h{\om}\) and the Eguchi--Hanson metric \(\tld{\om}_t\) on \(\tld{X}\) used in the construction of \(\ca{\ph}\).  For the purpose of proving \condsref{CT-Cond}, I require a lower bound on the volume forms induced by \(\ca{\om}_t\) which is both sharp and \(t\)-independent.  The purpose of this subsection is to derive this bound.

I begin by providing an alternative perspective on the \EH\ forms \(\tld{\om}_t\).  Consider the problem of constructing Ricci-flat K\^^22{a}hler metrics on \(\lt(\lqt{\bb{C}^2}{\{\pm1\}}\rt)\osr\{0\}\).   One possible approach is as follows:  suppose one is given a closed, positive, real \((1,1)\)-form \(\tld{\om}\) on \(\lt.\lt(\lqt{\bb{C}^2}{\{\pm1\}}\rt)\m\osr\{0\}\rt.\) with the following property:
\e\label{norm}
\tld{\om}^2 = \lt(\fr{Re}\h{\Om}\rt)^2 = \lt(\fr{Im}\h{\Om}\rt)^2 = 2vol_0
\ee
(here \(\h{\Om} = \dd w^1 \w \dd w^2\) as usual and \(vol_0\) denotes the Euclidean volume form on \(\lqt{\bb{C}^2}{\{\pm1\}}\) given by \(\dd x^1 \w \dd y^1 \w \dd x^2 \w \dd y^2\), where \(w^1 = x^1 + iy^1\) and \(w^2 = x^2 + iy^2\)).   Then the triple \(\lt(\tld{\om},\fr{Re}\h{\Om},\fr{Im}\h{\Om}\rt)\) defines an \(\Sp(1)\) structure on \(\lt.\lt(\lqt{\bb{C}^2}{\{\pm1\}}\rt)\m\osr\{0\}\rt.\) and the condition \(\dd\tld{\om} = \dd\fr{Re}\h{\Om} = \dd\fr{Im}\h{\Om} = 0\) implies the vanishing of the torsion of this \(\Sp(1)\)-structure \cite[Lem.\ 6.8, p.\ 91]{TSDEoaRS}, i.e.\ the triple defines a hyper-K\^^22{a}hler structure.   This implies that the holonomy of the K\^^22{a}hler metric induced by \(\tld{\om}\) is contained in \(\Sp(1) = \SU(2)\) which is a Ricci-flat holonomy group \cite[p.\ 55]{RHG&CG}.

To ensure that \(\tld{\om}\) is closed and a \((1,1)\)-form, apply the ansatz:
\e\label{ca-om}
\tld{\om} = \frac{1}{4}\dd\dd^c\lt[a(\la)\rt] = a'(\la) \h{\om} + \frac{1}{4}a''(\la) \dd(\la) \w \dd^c(\la),
\ee
where \(\la = r^2 = \lt|w^1\rt|^2 + \lt|w^2\rt|^2\) is the radial distance squared from \(0 \in \lqt{\bb{C}^2}{\{\pm1\}}\), \(a:(0,\infty) \to \bb{R}\) is a smooth, real-valued function and \(\h{\om} = \frac{i}{2}\lt(\dd w^1 \w \dd\ol{w}^1 + \dd w^2 \w \dd\ol{w}^2\rt)\) is the standard Euclidean K\^^22{a}hler form on \(\lqt{\bb{C}^2}{\{\pm1\}}\) (in particular, note that \(\tld{\om}\) only depends on \(a'\), rather than on \(a\) itself).   A long, but elementary, calculation then verifies:
\e\label{Ricci}
\tld{\om}^2 = \frac{\frac{\dd}{\dd\la}\lt[\la^2a'(\la)^2\rt]}{\la}vol_0.
\ee
Thus \eref{norm} is reduced to the second-order ODE:
\ew
\frac{\dd}{\dd\la}\lt[\la^2a'(\la)^2\rt] = 2\la.
\eew
Integrating this equation gives:
\e\label{EH-pot}
a'_t(\la) = \sqrt{1 + \frac{t^4}{\la^2}}
\ee
for some \(t\ge0\) (the positive square root is needed to ensure that \(\tld{\om}\) is a positive \((1,1)\)-form).  In the case where \(t = 0\), \(\tld{\om}_0 = \frac{1}{4}\dd\dd^c a_0(\la)\) is simply the Euclidean form \(\h{\om}\), however in the case \(t>0\), one recovers the Eguchi--Hanson metrics \(\tld{\om}_t\) defined in \sref{FFKM-constr}, \eref{EH-defn}.

\begin{Rk}
The fact that the 1-parameter family \(\tld{\om}_t\) of \EH\ metrics can naturally be extended to include the metric \(\h{\om}\) (corresponding to the case \(t=0\)) may appear initially surprising, since the metrics \(\tld{\om}_t\) for \(t>0\) are defined on the manifold \(\tld{X}\) whereas the metric \(\h{\om}\) is defined on the orbifold \(\lqt{\bb{C}^2}{\{\pm1\}}\).  However as \(t\to0\), the diameter of the exceptional divisor \(\fr{E}\) tends to zero and so the manifolds \(\lt(\tld{X},\tld{\om}_t\rt)\) converge in the Gromov--Hausdorff sense to the orbifold \(\lt(\lqt{\bb{C}^2}{\{\pm1\}},\h{\om}\rt)\), and hence the result is not as surprising as first appears. See also \cite[p.\ 21]{K3S&SD}.
\end{Rk}

Using this perspective, I now prove the required bound on the volume form of \(\ca{\om}_t\):

\begin{Prop}\label{tld-om-const}
There exist \(R>0\) and \(\up\in(0,1)\), independent of \(t > 0\), such that the following is true:

For every \(t>0\), there exists a closed, real, positive \((1,1)\)-form \(\ca{\om}_t\) on \(\tld{X}\) satisfying the following three properties:
\begin{enumerate}
\item \(\ca{\om}_t = \tld{\om}_t\) on the region \(\lt\{p \in \tld{X}~\m|~r(p) \le \frac{tR}{2}\rt\}\);
\item \(\ca{\om}_t = \hat{\om}\) on a neighbourhood of the region \(\lt\{p \in \tld{X}~\m|~r(p) \ge tR\rt\}\);
\item \(\ca{\om}_t^2 \ge 2\up^2 vol_0\) on all of \(\tld{X}\), with equality holding at least on \(\lt\{p \in \tld{X}~\m|~r(p) = \fr{r}_t\rt\}\), where \(\fr{r}_t \in \lt(\frac{tR}{2}, tR\rt)\).
\end{enumerate}
\end{Prop}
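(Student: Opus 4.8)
The plan is to use the fact, recorded in \eqref{ca-om}--\eqref{EH-pot}, that the \EH\ form $\tld{\om}_t$ and the Euclidean form $\h{\om}$ both arise as $\tfrac14\dd\dd^c$ of a radial potential, the only difference being the radial profile $a'$: with $\la = r^2$, one has $a'_t(\la) = \sqrt{1 + t^4/\la^2}$ for $\tld{\om}_t$ and $a'_0 \equiv 1$ for $\h{\om}$. The crucial observation is the scaling identity $a'_t(\la) = \sqrt{1 + (\la/t^2)^{-2}}$, which shows that, upon substituting $s = \la/t^2$, interpolating between the two profiles becomes a single, $t$-\emph{independent} problem. Accordingly, I would fix $R$ sufficiently large and choose a smooth function $\beta:(0,\infty) \to [1,\infty)$ which equals $\sqrt{1 + s^{-2}}$ on $(0, \tfrac{R^2}{4}]$ and equals $1$ on $[\tfrac{3R^2}{4}, \infty)$, such that $h(s) := s\,\beta(s)$ --- which equals $\sqrt{s^2+1}$ near $0$ and $s$ near $\infty$ --- is strictly increasing, with $h' > 0$, on all of $(0,\infty)$. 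For $R$ large such a $\beta$ exists, since the two branches of $h$ are increasing with suitably ordered values at the ends of the transition interval $J := [\tfrac{R^2}{4}, \tfrac{3R^2}{4}]$ (namely $h(\tfrac{R^2}{4}) = \sqrt{R^4/16+1} < \tfrac{3R^2}{4} = h(\tfrac{3R^2}{4})$), and the function $h(s) - s$ which needs to be interpolated has total variation $O(R^{-2})$ over $J$, so can be joined smoothly and monotonically while keeping its derivative $> -1$. Finally, define $\ca{\om}_t := a'(\la)\,\h{\om} + \tfrac14 a''(\la)\,\dd\la \w \dd^c\la$ with radial profile $a'(\la) := \beta(\la/t^2)$, i.e.\ the form \eqref{ca-om} for this profile. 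Since $\beta(\la/t^2) = a'_t(\la)$ whenever $\la/t^2 \le R^2/4$, this agrees with $\tld{\om}_t$ near the exceptional divisor $\fr{E}$ --- hence extends smoothly over $\fr{E}$ by \cite[p.\ 160]{CMwSH} --- and so defines a smooth form on all of $\tld{X}$.

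Properties (1) and (2) are then immediate: for $r \le tR/2$ one has $\la/t^2 \le R^2/4$, so $\ca{\om}_t = \tld{\om}_t$; and $\{r > \sqrt{3}\,tR/2\}$ is an open neighbourhood of $\{r \ge tR\}$ on which $\la/t^2 > 3R^2/4$, so $\ca{\om}_t = \h{\om}$. Positivity of $\ca{\om}_t$ as a real $(1,1)$-form is, by the standard criterion for radial K\"ahler potentials, equivalent to $a' > 0$ and $\tfrac{\dd}{\dd\la}[\la\,a'(\la)] > 0$; the former holds since $\beta \ge 1$, and by the chain rule the latter equals $h'(\la/t^2) > 0$.

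For property (3) I would use \eqref{Ricci}. With $s = \la/t^2$ one has $\la^2 a'(\la)^2 = t^4 h(s)^2$, whence a short computation gives $\ca{\om}_t^2 = \psi(r^2/t^2)\,vol_0$, where $\psi(s) := \tfrac1s\,\tfrac{\dd}{\dd s}\big[h(s)^2\big] = \tfrac{2h(s)h'(s)}{s}$ is continuous, strictly positive, and independent of $t$. Since $h(s)^2 = s^2+1$ on $(0,\tfrac{R^2}{4}]$ and $h(s)^2 = s^2$ on $[\tfrac{3R^2}{4},\infty)$, one has $\psi \equiv 2$ off $J$; in particular $\ca{\om}_t^2 = 2\,vol_0$ there, in accordance with \eqref{norm}. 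Put $2\up^2 := \min_{s\in J}\psi(s)$, which is positive ($J$ compact, $\psi > 0$) and $\le 2$. That $\up < 1$ and that this minimum is attained in the interior of $J$ both follow from comparing integrals over $J$:
\[
\int_J \tfrac{\dd}{\dd s}\big[h(s)^2\big]\,\dd s = h\!\left(\tfrac{3R^2}{4}\right)^2 - h\!\left(\tfrac{R^2}{4}\right)^2 = \tfrac{9R^4}{16} - \tfrac{R^4}{16} - 1 = \tfrac{R^4}{2} - 1 < \tfrac{R^4}{2} = \int_J 2s\,\dd s ,
\]
so $\tfrac{\dd}{\dd s}[h^2](s) < 2s$, i.e.\ $\psi(s) < 2$, somewhere on $J$; since $\psi \equiv 2$ on $\del J$, the minimum is $< 2$ and is attained at some $\fr{s} \in (\tfrac{R^2}{4},\tfrac{3R^2}{4})$. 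Hence $\up \in (0,1)$, and $\ca{\om}_t^2 = \psi(r^2/t^2)\,vol_0 \ge 2\up^2\,vol_0$ on all of $\tld{X}$, with equality at least on $\{r^2/t^2 = \fr{s}\} = \{r = \fr{r}_t\}$, where $\fr{r}_t := t\sqrt{\fr{s}} \in \big(\tfrac{tR}{2}, \tfrac{\sqrt3\,tR}{2}\big) \subset \big(\tfrac{tR}{2}, tR\big)$. As $R$, $\up$ and $\beta$ do not depend on $t$, this establishes the proposition.

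The argument is essentially bookkeeping once the rescaling $\la = t^2 s$ is set up; the one step needing a genuine (if elementary) argument --- which I expect to be the main point --- is the interior-minimum claim for $\psi$, handled above via $\int_J (h^2)' < \int_J 2s$ and $\psi \equiv 2$ on $\del J$. One minor technical point: near $\fr{E}$ the comparison form $vol_0$ is to be understood as the smooth volume form $\tfrac12\tld{\om}_t^2$ on $\tld{X}$ (a constant multiple of $\tld{\Om}\w\ol{\tld{\Om}}$), which coincides with the Euclidean volume form in the $w$-coordinates on $\tld{X}\osr\fr{E}$ by \eqref{norm}; with this reading, inequality (3) makes sense and holds throughout $\tld{X}$.
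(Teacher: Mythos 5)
Your proposal is correct and proves exactly what is needed: a \(t\)-independent \(\up\in(0,1)\), the lower bound \(\ca{\om}_t^2\ge 2\up^2 vol_0\), and equality on a level set \(\lt\{r=\fr{r}_t\rt\}\) with \(\fr{r}_t\) strictly inside the transition annulus. It shares the paper's starting point — the radial ansatz \eref{ca-om} and the volume identity \eref{Ricci}, with the modification confined to \(\lt\{tR/2<r<tR\rt\}\) — but the two substantive steps are carried out differently. The paper prescribes the defect: it builds \(h_t(\la)=\int_0^\la k_t\) from a cutoff-type \(k_t\le0\) subject to the integral constraint \(\int_0^{t^2R^2}k_t=-t^4\) and the pointwise bound \(k_t\ge-c\la\) with equality somewhere (\clref{h-prop}), fixing \(c=2-2\up^2\) in advance through a perturbative positivity argument (\(\lt|\ca{\om}_t-\h{\om}\rt|_{\h{\om}}<1\) via estimates on \(\al_t'-1\) and \(r^2\al_t''\), which is what forces \(c=C^{-1}\) and \(R\) large). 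You instead exploit the scale-invariance \(a_t'(\la)=\sqrt{1+(\la/t^2)^{-2}}\) to reduce to a single \(t\)-independent profile \(\be\), prove positivity exactly via the eigenvalue criterion for radial potentials (\(a'>0\) and \((\la a')'=h'>0\)), and only afterwards read off \(2\up^2\) as \(\min_J\psi\), with the comparison \(\int_J(h^2)'<\int_J 2s\) delivering both \(\up<1\) and an interior equality point. The trade-off: the paper's route produces explicit constants (\(R_0(c)\), \(c=C^{-1}\)) and incidentally keeps \(\ca{\om}_t^2\le2vol_0\) (not required by the statement), whereas yours dispenses with the norm estimates, makes the \(t\)-independence structural rather than a matter of constant-tracking, and obtains the equality locus as an attained minimum; since only the existence of some uniform \(\up\in(0,1)\) with the stated properties is used later, either route suffices. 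Your closing convention — reading \(vol_0\) across \(\fr{E}\) as \(\tfrac14\tld{\Om}\w\ol{\tld{\Om}}=\tfrac12\tld{\om}_t^2\) — is the same implicit one under which the paper asserts item (3) on all of \(\tld{X}\).
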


\begin{Rk}
It is not difficult to show that any \(\ca{\om}_t\) of the form considered in \eref{ca-om} which satisfies points (1) and (2) cannot also satisfy \(\ca{\om}_t^2 \ge 2vol_0\) on all of \(\tld{X}\), and thus there is some \(\up = \up(t) \in (0,1)\) such that \(\ca{\om}_t^2 \ge 2\up(t)^2vol_0\), with equality realised at some point.  The significance of the above result is that \(\up\) can be taken to be independent of \(t\).
\end{Rk}

\begin{proof}
Let \(\up \in (0,1)\) be chosen later and write \(c = 2 - 2\up^2\).   I begin with the following auxiliary claim:

\begin{Cl}\label{h-prop}
There exists \(R_0 = R_0(c)>0\) (i.e.\ depending on \(c\) but independent of \(t\)) such that the following is true:

For all \(R \ge R_0\) and all \(t>0\), there exists a smooth function \(k_t:[0,t^2 R^2] \to (-\infty,0]\) satisfying the following three properties:
\begin{itemize}
\item[(i)] \(k_t \equiv 0\) on \(\lt[0,\frac{t^2 R^2}{4}\rt]\) and on a neighbourhood of \(t^2 R^2\) in \([0,t^2 R^2]\);
\item[(ii)] \(\bigintsss_0^{t^2 R^2} k_t(\la) \dd \la = -t^4\);
\item[(iii)] \(k_t(\la) \ge -c\la\) for all \(\la \in [0,t^2 R^2]\), with equality holding at some point \(\fr{r}_t^2 \in [0,t^2 R^2]\).
\end{itemize}
\end{Cl}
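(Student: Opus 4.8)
The plan is to strip the parameter $t$ out by a rescaling, reducing the claim to a single, $t$-independent construction which is then a routine cut-off argument; the one point that must not be fumbled is an area comparison, which is what makes $R_0$ depend on $c$ alone.

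First I would rescale. Fix $t>0$, substitute $\la = t^2\mu$, and set $\ka(\mu) = t^{-2}k_t(t^2\mu)$ for $\mu \in [0,R^2]$. A direct check shows that (i)--(iii) for $k_t$ are equivalent to the following conditions on $\ka$: $\ka$ is smooth with values in $(-\infty,0]$; $\ka \equiv 0$ on $\lt[0,\tfrac{1}{4}R^2\rt]$ and on a neighbourhood of $R^2$; $\int_0^{R^2}\ka(\mu)\,\dd\mu = -1$ (the factor $t^2\,\dd\mu = \dd\la$ and the prefactor $t^{-2}$ together turn $-t^4$ into $-1$); and $\ka(\mu) \ge -c\mu$ everywhere, with equality somewhere. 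No $t$ appears in this reduced problem, so any $R_0$ that works for it works simultaneously for every $t$ --- which is exactly the $t$-independence asserted in the claim. (For orientation: downstream in the proof of \pref{tld-om-const} one sets $\ca{\om}_t = \tfrac{1}{4}\dd\dd^c\lt[\ca{a}_t(\la)\rt]$ with the potential normalised so that $\la^2\lt(\ca{a}_t'(\la)\rt)^2 = \la^2 + t^4 + \int_0^\la k_t(s)\,\dd s$; then $\ca{\om}_t^2 = \lt(2 + \la^{-1}k_t(\la)\rt)vol_0$, so (iii) becomes $\ca{\om}_t^2 \ge 2\up^2 vol_0$ and (i)--(ii) become the matching conditions with $\tld{\om}_t$ near $\fr{E}$ and with $\h{\om}$ outside.)

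Next I would fix $R_0$. For any admissible $\ka$, the bound $\ka \ge -c\mu$ together with $\ka \equiv 0$ on $\lt[0,\tfrac{1}{4}R^2\rt]$ forces $\int_0^{R^2}\ka \ge \int_{R^2/4}^{R^2}(-c\mu)\,\dd\mu = -\tfrac{15c}{32}R^4$, so for the integral condition to be satisfiable one needs $\tfrac{15c}{32}R^4 \ge 1$. I would therefore set $R_0 = \lt(\tfrac{64}{15c}\rt)^{1/4}$, which depends on $c$ alone and guarantees $\tfrac{15c}{32}R^4 \ge 2$ for all $R \ge R_0$, leaving strictly more room than needed.

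Finally I would construct $\ka$ in the form $\ka(\mu) = -c\mu\,\ch(\mu)$, where $\ch : [0,R^2] \to [0,1]$ is smooth, vanishes identically on $\lt[0,\tfrac{1}{4}R^2\rt]$ and near $R^2$, and attains the value $1$ at some interior point of $\lt(\tfrac{1}{4}R^2, R^2\rt)$. Any such $\ka$ automatically satisfies (iii) (the inequality because $\ch \le 1$ and $\mu \ge 0$, the equality wherever $\ch = 1$), lies in $(-\infty,0]$ because $\ch \ge 0$, and is smooth, the extension by zero being smooth since $\ch$ vanishes near the relevant endpoints. It remains only to arrange $c\int_0^{R^2}\mu\,\ch(\mu)\,\dd\mu = 1$: running $\ch$ through a one-parameter family of such cut-offs interpolating between a thin bump near $\tfrac{1}{4}R^2$ and a cut-off equal to $1$ on almost all of $\lt(\tfrac{1}{4}R^2,R^2\rt)$, the quantity $c\int_0^{R^2}\mu\,\ch(\mu)\,\dd\mu$ varies continuously and covers a range containing $1$ (its extreme values being, respectively, below $1$ and arbitrarily close to $c\int_{R^2/4}^{R^2}\mu\,\dd\mu = \tfrac{15c}{32}R^4 \ge 2$), so the intermediate value theorem furnishes a suitable $\ch$. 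Undoing the rescaling gives $k_t(\la) = -c\la\,\ch(\la/t^2)$, which visibly meets (i)--(iii), with the equality in (iii) holding at $\fr{r}_t^2 = t^2\mu_*$ for any $\mu_* \in \lt(\tfrac{1}{4}R^2, R^2\rt)$ with $\ch(\mu_*) = 1$. I expect the area comparison pinning down $R_0$ to be the only genuinely substantive step --- it is what forbids $R_0$ from depending on $t$ --- while everything else is a standard bump-function construction.
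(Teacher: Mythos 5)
Your proposal is correct and follows essentially the same route as the paper: the substantive input is the same area comparison $\int_{R^2/4}^{R^2} c\mu\,\dd\mu = \tfrac{15c}{32}R^4$ exceeding the required mass (giving $R_0$ of the form $\lt(\mathrm{const}/(15c)\rt)^{1/4}$, depending only on $c$), followed by a routine smoothed cut-off achieving the exact integral and the touching condition. Your rescaling $\la = t^2\mu$ and the intermediate-value argument for hitting the integral exactly are just more explicit renderings of the paper's remark that one ``smooths out the piecewise constant function'' $0$, $-c\la$, $0$ while preserving the integral and equality conditions.
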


\begin{proof}[Proof of Claim]
For such a function \(k_t\) to exist, it is necessary and sufficient that:
\ew
\bigintsss_{\frac{t^2 R^2}{4}}^{t^2 R^2} c \la\ \dd \la > t^4.
\eew
(Given this, one constructs \(k_t\) by appropriately smoothing out the piecewise constant function:
\ew
\hat{k}_t(\la) =
\begin{dcases*}
0 & on \(\lt[0,\frac{t^2 R^2}{4}\rt)\)\\
-c\la & on \(\lt[\frac{t^2 R^2}{4},t^2 R^2\rt)\)\\
0 & at \(\la = t^2 R^2\)
\end{dcases*}
\eew
ensuring that \(\bigintsss_0^{t^2 R^2} k_t(\la) \dd \la = -t^4\) and that \(k_t(\fr{r}^2_t) = -c\fr{r}^2_t\) still holds at some point \(\fr{r}^2_t \in [0,t^2R^2]\).  The converse is clear.)   However:
\ew
\bigintsss_{\frac{t^2 R^2}{4}}^{t^2 R^2} c \la\ \dd \la = \frac{15ct^4 R^4}{32} > t^4
\eew
whenever \(R_0(c) > \sqrt[4]{\frac{32}{15c}}\), completing the proof {\color{red}of the claim.}

\let\qed\relax
\end{proof}

The proof of \pref{tld-om-const} now proceeds as follows.  Define:
\ew
h_t(\la) = \bigintsss_0^\la k_t(s) \dd s \ge -t^4
\eew
and define:
\ew
\ca{\om}_t = \frac{1}{4}\dd\dd^c\lt[\al_t(r^2)\rt],
\eew
where \(\al_t:(0,t^2 R^2] \to \bb{R}\) satisfies:
\e\label{al-defn}
\al'_t(\la) = \sqrt{1 + \frac{t^4}{\la^2} + \frac{h_t(\la)}{\la^2}}.
\ee
For \(\la \in \lt(0,\frac{t^2 R^2}{4}\rt]\), \(h_t \equiv 0\) by \clref{h-prop}(i), hence \(\al'_t = a'_t\) (see \eref{EH-pot}) and whence \(\ca{\om}_t = \tld{\om}_t\) for \(r \in \lt(0,\frac{t R}{2}\rt)\).  It follows that \(\ca{\om}_t\) extends over the exceptional divisor in \(\tld{X}\) and satisfies property (1) in \pref{tld-om-const}.  Similarly, for \(\la\) in a neighbourhood of \(t^2 R^2\) in \(\lt(0,t^2 R^2\rt]\), \(h_t \equiv -t^4\) by \clref{h-prop}(i)--(ii), hence \(\al'_t = a_0\) and whence \(\ca{\om}_t = \h{\om}\) for \(r\) in a neighbourhood of \(t R\) in \(\lt(0,t R\rt]\).  Thus \(\ca{\om}_t\) extends over the whole of \(\tld{X}\) and satisfies property (2) in \pref{tld-om-const}.  Thus to complete the proof of \pref{tld-om-const}, it suffices to prove that \(\ca{\om}_t\) is positive and satisfies property (3).

To prove positivity, I use the following well-known fact: if \(\om\) is a real, positive \((1,1)\)-form on an almost complex manifold \(\M\) and \(\om'\) is a real \((1,1)\)-form on \(\M\) with \(|\om'-\om|_\om<1\), then \(\om'\) is also positive (this can be verified by working in local coordinates).  To apply this to \(\ca{\om}_t\), firstly note that, since \(\tld{\om}_t\) and \(\h{\om}\) are both positive, one can restrict attention to the region \(r \in \lt(\frac{t R}{2},t R\rt)\).   Thus it suffices to prove that:
\ew
\lt|\h{\om} - \lt(\al'_t(r^2) \h{\om} + \frac{1}{4}\al''_t(r^2) \dd(r^2) \w \dd^c(r^2)\rt)\rt|_{\h{\om}} <1 \hs{5mm} \text{for }r \in \lt(\frac{t R}{2},t R\rt).
\eew
Using the triangle inequality:
\e\label{om-bd-1}
\lt|\h{\om} - \lt(\al'_t(r^2) \h{\om} + \frac{1}{4}\al''_t(r^2) \dd(r^2) \w \dd^c(r^2)\rt)\rt|_{\h{\om}} \le |\al'_t(r^2) - 1|\cdot \lt|\h{\om}\rt|_{\h{\om}} + \frac{1}{4}\lt|\al''_t(r^2)\rt|\cdot\lt|\dd(r^2) \w \dd^c(r^2)\rt|_{\h{\om}}.
\ee
Using \eref{al-defn}, it follows that:
\e\label{om-bd-2}
\lt|\al'_t(r^2)-1\rt| &\le \frac{\sqrt{t^4 + h_t(r^2)}}{r^2} \le \frac{4}{R^2} \hs{5mm} \text{for } r \in \lt(\frac{t R}{2}, t R\rt),
\ee
since \(h_t \le 0\).  Using \eref{al-defn} once more, one sees:
\ew
\al''_t(r^2) = \frac{-\frac{t^4 + h_t(r^2)}{r^6} + \frac{h_t'(r^2)}{2r^4}}{\sqrt{1 + \frac{t^4}{r^4} + \frac{h_t(r^2)}{r^4}}}
\eew
and hence:
\ew
r^2\lt|\al''_t(r^2)\rt| \le \frac{\lt|t^4 + h_t(r^2)\rt|}{r^4} + \frac{|h'_t(r^2)|}{2r^2}.
\eew
Since \(-t^4 \le h_t(r^2) \le 0\), \(-cr^2 \le h_t'(r^2) \le 0\) and \(\frac{t R}{2} \le r \le t R\), it follows that:
\e\label{om-bd-3}
r^2\lt|\al''_t(r^2)\rt| \le \frac{16}{R^4} + \frac{c}{2}.
\ee
A simple calculation shows that \(\lt|\h{\om}\rt|_{\h{\om}} = \sqrt{2}\) and \(\lt|\dd(r^2) \w \dd^c(r^2)\rt|_{\h{\om}} \le 4Cr^2\) for some \(C>0\) independent of \(r^2\), \(R\) and \(t\). Thus, by combining \erefs{om-bd-1}, \eqref{om-bd-2} and \eqref{om-bd-3}:
\ew
\lt|\h{\om} - \ca{\om}_t\rt|_{\h{\om}} \le \frac{4\sqrt{2}}{R^2} + \frac{16C}{R^4} + \frac{Cc}{2}.
\eew
Define \(c = C^{-1}\) (note that \(C\) is independent of \(R\) and \(t\)) and choose \(R>R_0(c)\) such that:
\ew
\frac{4\sqrt{2}}{R^2} + \frac{16C}{R^4} < \frac{1}{2}.
\eew
Then \(\lt|\h{\om} - \ca{\om}_t\rt|_{\h{\om}} <1\) for \(r \in \lt(\frac{t R}{2}, t R\rt)\) and thus the positivity of \(\ca{\om}_t\) has been verified.\\

Finally, consider property (3) of \pref{tld-om-const}.   Clearly for \(r \notin \lt(\frac{t R}{2}, t R\rt)\), one has \(\ca{\om}_t^2 = 2vol_0\), since both \(\h{\om}\) and \(\tld{\om}_t\) have this property.  For \(r \in \lt(\frac{t R}{2}, t R\rt)\), by combining \eref{Ricci} and \eqref{al-defn}, one computes:
\ew
\tld{\om}_t^2 = \lt(2 + \frac{h'_t(r^2)}{r^2}\rt)vol_0.
\eew
Now by property (iii) in \clref{h-prop}, \(\frac{h'_t(r^2)}{r^2} \ge -c = 2\up^2 - 2\) with equality holding at \(r = \fr{r}_t \in [0,tR]\), as required.  This completes the proof.

\end{proof}

\subsection{Defining a suitable \(\h{g}^\infty\)}\label{bulk-conv}

For simplicity of notation, write \(\ca{\upphi}^\mu = \mu^{-6}\ca{\ph}^\mu\), so that \(g^\mu = g_{\ca{\upphi}^\mu}\), and write \(\ca{S} = \rh^{-1}\lt(\h{S}\rt)\) for the exceptional locus of \(\ca{\M}\).  The next task is to understand the limit of the Riemannian metrics \(g^\mu\) away from \(\ca{S}\).

Define:
\ew
\ca{\fr{W}}_{\bf a} = \bigcap_{k\ge1} \ca{W}_{{\bf a},k} \supset \ca{S},
\eew
so that in local coordinates:
\ew
\ca{\fr{W}}_{\bf a} = \lt\{y^1,y^2 = 0, \lt(y^5\rt)^2 + \lt(y^6\rt)^2 \le \frac{1}{2}\ep\rt\},
\eew
and write:
\ew
\lt.\ca{\M}\m\osr\coprod_{{\bf a}\in\fr{A}}\ca{\fr{W}}_{\bf a}\rt. &= \lt(\ca{\M}\m\osr\coprod_{{\bf a}\in\fr{A}}\ca{U}_{\bf a}\rt) \bigcup \lt(\coprod_{{\bf a}\in\fr{A}}\lt.\ca{U}_{\bf a}\m\osr\ca{W}_{\bf a}\rt.\rt) \bigcup \lt(\coprod_{{\bf a}\in\fr{A}}\lt.\ca{W}_{\bf a}\m\osr\ca{\fr{W}}_{\bf a}\rt.\rt)\\
&= \hs{8.5mm} \itr{\M} \hs{8.5mm} \bigcup \hs{7mm} \ca{\M}_{int} \hs{7mm} \bigcup \lt(\coprod_{{\bf a}\in\fr{A}}\lt.\ca{W}_{\bf a}\m\osr\ca{\fr{W}}_{\bf a}\rt.\rt).
\eew
I shall consider the behaviour of \(g^\mu\) on each of these three regions in turn.

\subsubsection{The region \(\itr{\M}\)}

Recall that on \(\itr{\M}\):
\ew
\ca{\vph}^\mu = \th^{123} + \mu^{-6}\lt(\th^{145} + \th^{167} - \th^{246} + \th^{257} + \th^{347} + \th^{356}\rt).
\eew
Using the \g-basis \(\lt(\th^1,\th^2,\th^3,\mu^{-3}\th^4,\mu^{-3}\th^5,\mu^{-3}\th^6,\mu^{-3}\th^7\rt)\), it follows that
\ew
g^\mu &= \lt(\lt(\th^1\rt)^{\ts2} + \lt(\th^2\rt)^{\ts2} + \lt(\th^3\rt)^{\ts2}\rt) + \mu^{-6}\lt(\lt(\th^4\rt)^{\ts2} + \lt(\th^5\rt)^{\ts2} + \lt(\th^6\rt)^{\ts2} + \lt(\th^7\rt)^{\ts2}\rt)\\
&\to \lt(\th^1\rt)^{\ts2} + \lt(\th^2\rt)^{\ts2} + \lt(\th^3\rt)^{\ts2} \text{ uniformly on } \itr{\M} \text{ as } \mu \to \infty.
\eew
Thus define:
\e\label{g-infty-1}
g^\infty = \lt(\th^1\rt)^{\ts2} + \lt(\th^2\rt)^{\ts2} + \lt(\th^3\rt)^{\ts2} \text{ on } \itr{\M}.
\ee
Then \(g^\mu - g^\infty\) is non-negative definite for all \(\mu\in[1,\infty)\) and thus \eref{La-cvgce} holds on the region \(\itr{\M}\) with \(\La_\mu = 1\) for all \(\mu\).  Moreover (see \cite[\S9]{ACG2CMwFBNb1eq1}) it can be shown that \(\ker\dd\h{q} = \<e_4,e_5,e_6,e_7\?\), where \((e_i)\) denotes the basis of left-invariant vector fields on \(\M\) dual to the basis of left-invariant 1-forms \((\th^i)\).  (Note that whilst the forms \(e_i\) do not themselves descend to the orbifold \(\h{\M}\), the distribution \(\<e_4,...,e_7\?\) is invariant under the involution \(\mc{I}\) defined in \eref{FFKMInv} and thus does descend to \(\h{\M}\).)  Thus from \eref{g-infty-1}, one sees that \(g^\infty\) is positive definite transverse to \(\ker\dd\h{q}\) on \(\itr{\M}\).

\subsubsection{The region \(\coprod_{{\bf a}\in\fr{A}}\lt.\ca{W}_{\bf a}\m\osr\ca{\fr{W}}_{\bf a}\rt.\)}

For simplicity, fix some choice of \({\bf a} \in \fr{A}\). Recall from \eref{outside-surg} that on the region \(\lt.\ca{W}_{\bf a}\m\osr\ca{W}_{{\bf a},\mu}\rt.\):
\ew
\ca{\upphi}^\mu =  \dd y^{123} + \mu^{-6}\lt\{\dd y^{145} + \dd y^{167} - \dd y^{246} + \dd y^{257} + \dd y^{347} + \dd y^{356} + y^1\dd y^{147}\rt\}.
\eew
In particular, for any given \(k\in[1,\infty)\) and all \(\mu\ge k\), since \(\lt.\ca{W}_{\bf a}\m\osr\ca{W}_{{\bf a},k}\rt. \pc \lt.\ca{W}_{\bf a}\m\osr\ca{W}_{{\bf a},\mu}\rt.\) one may calculate that on \(\lt.\ca{W}_{\bf a}\m\osr\ca{W}_{{\bf a},k}\rt.\):
\ew
g^\mu = \lt(1 - \frac{\lt(y^1\rt)^2}{4}\rt)^\frac{-1}{3}\bigg\{&\lt[\lt(\dd y^1\rt)^2 + \lt(\dd y^2\rt)^2 + \lt(\dd y^3\rt)^2 + y^1\dd y^1 \s \dd y^3\rt]\\
& + \mu^{-6}\lt[\lt(\dd y^4\rt)^2 + \lt(\dd y^5\rt)^2 + \lt(\dd y^6\rt)^2 + \lt(\dd y^7\rt)^2 + y^1 \dd y^4 \s \dd y^6 + y^1 \dd y^5 \s \dd y^7\rt]\bigg\}.
\eew
Define:
\e\label{g-infty-inner}
g^\infty = \lt(1 - \frac{\lt(y^1\rt)^2}{4}\rt)^\frac{-1}{3}\lt\{\lt(\dd y^1\rt)^2 + \lt(\dd y^2\rt)^2 + \lt(\dd y^3\rt)^2 + y^1\dd y^1 \s \dd y^3\rt\} \text{ on } \lt.\ca{W}_{\bf a}\m\osr\ca{\fr{W}}_{\bf a}\rt..
\ee
Then \(g^\mu \to g^\infty\) uniformly on \(\lt.\ca{W}_{\bf a}\m\osr\ca{W}_{{\bf a},k}\rt.\) as \(\mu \to \infty\). Moreover, when \(\mu \ge k\), \(g^\mu - g^\infty\) is non-negative definite for \(\ep>0\) sufficiently small, independent of \(\mu\) (where \(\ep\) is the size of the surgery region used in the construction of \(\ca{\M}\)).  Thus \eref{La-cvgce} holds on \(\lt.\ca{W}_{\bf a} \m\osr \ca{W}_{{\bf a},k}\rt.\) by setting \(\La_\mu(k) = 1\) for all \(\mu\in[k,\infty)\).  Moreover, using \erefs{Ph0} and \eqref{Ph1}, one can show that on the region \(\h{U}_{\bf a}\) for \({\bf a}\in\fr{A}\):
\e\label{ker-in-y's}
\ker\dd\h{q} = \<\frac{\del}{\del y^4}, \frac{\del}{\del y^5}, \frac{\del}{\del y^6}, \frac{\del}{\del y^7}\?.
\ee
Thus by \eref{g-infty-inner}, \(g^\infty\) is positive definite transverse to \(\ker\dd\h{q}\) on \(\lt. \ca{W}_{\bf a} \m\osr \ca{W}_{{\bf a},k} \rt.\) for all \(\ep>0\) sufficiently small, independent of \(\mu\).

\subsubsection{The region \(\ca{\M}_{int}\)}\label{Mint}

By analogy with the notation \(\ca{\upphi}^\mu = \mu^{-6}\ca{\ph}^\mu\), define \(\h{\upxi}^\mu\) on \(\ca{\M}_{int}\) by:
\ew
\h{\upxi}^\mu = \mu^{-6}\h{\xi}^\mu = \dd y^{123} + \mu^{-6}\lt\{\dd y^{145} + \dd y^{167} - \dd y^{246} + \dd y^{257} + \dd y^{347} + \dd y^{356}\rt\}.
\eew

Recall also that, by the proof of \pref{almostprod}, on the region \(\ca{\M}_{int}\):
\e\label{upphi}
\ca{\upphi}^\mu  = \h{\upxi}^\mu + \mu^{-6}\lt\{y^1 \dd y^{147} + \dd\lt[f\lt(\frac{r}{\ep}\rt)\al_{\bf a}\rt]\rt\},
\ee
where \(f\) satisfies \eref{f} and \(\al_{\bf a}\) (defined in \lref{quadlem}) is independent of \(\mu\) and at least quadratic in \((y^1,y^2,y^5,y^6)\).

To analyse the behaviour of \(g^\mu\) on \(\ca{\M}_{int}\) as \(\mu\to\infty\), it is useful to introduce a third \g\ 3-form \(\Xi^\mu\) on \(\ca{\M}_{int}\). To define \(\Xi^\mu\), firstly write \(\ca{\upphi}^\mu = \h{\upxi}^\mu + \mu^{-6}\sum_{1 \le i < j < k \le 7} \si_{ijk} \dd y^{ijk}\), where each coefficient \(\si_{ijk}\) is a smooth function on \(\ca{\M}_{int}\) independent of \(\mu\) and satisfying:
\e\label{lin-bd}
|\si_{ijk}| \le Cr
\ee
for some fixed \(C>0\), independent of \(\mu\), \(\ep\) and \(r\). Then define:
\e\label{ze}
\Xi^\mu = \h{\upxi}^\mu + \mu^{-6}\sum_{\substack{1 \le i \le 3\\ 4 \le j < k \le 7}} \si_{ijk}\dd y^{ijk}.
\ee
\begin{Lem}\label{upxi-vs-ze}
There exist constants \(C_1\) and \(C_2\), independent of \(\mu\) and \(r\), such that:
\begin{enumerate}
\item \(\lt\|\h{\upxi}^\mu - \Xi^\mu\rt\|_{\h{\upxi}^\mu} \le C_1r\);
\item \(\lt\|\ca{\upphi}^\mu - \Xi^\mu\rt\|_{\h{\upxi}^\mu} \le C_2\mu^{-3}r\).
\end{enumerate}
\end{Lem}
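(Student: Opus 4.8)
The plan is to deduce both bounds directly from the coordinate expressions for $\h{\upxi}^\mu$, $\Xi^\mu$ and $\ca{\upphi}^\mu$, using only the diagonal form of the metric $g_{\h{\upxi}^\mu}$.  First I would record this metric: since $\h{\upxi}^\mu = \mu^{-6}\h{\xi}^\mu$ and the metric induced by a \g\ 3-form scales by $c^{2/3}$ under $\ph \mapsto c\ph$ (which follows from \lref{scaling-lem}), \eref{met-expr} gives, on $\ca{\M}_{int}$,
\ew
g_{\h{\upxi}^\mu} = \lt(\dd y^1\rt)^{\ts2} + \lt(\dd y^2\rt)^{\ts2} + \lt(\dd y^3\rt)^{\ts2} + \mu^{-6}\lt[\lt(\dd y^4\rt)^{\ts2} + \lt(\dd y^5\rt)^{\ts2} + \lt(\dd y^6\rt)^{\ts2} + \lt(\dd y^7\rt)^{\ts2}\rt],
\eew
so that the $\dd y^i$ form a $g_{\h{\upxi}^\mu}$-orthogonal coframe with $\lt\|\dd y^i\rt\|_{\h{\upxi}^\mu} = 1$ for $i \in \{1,2,3\}$ and $\lt\|\dd y^j\rt\|_{\h{\upxi}^\mu} = \mu^3$ for $j \in \{4,5,6,7\}$; consequently a monomial $\dd y^{ijk}$ has norm $\mu^{3m(i,j,k)}$, where $m(i,j,k) \in \{0,1,2,3\}$ counts how many of $i,j,k$ lie in $\{4,5,6,7\}$.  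After this, the triangle inequality together with the pointwise bound $|\si_{ijk}| \le Cr$ of \eref{lin-bd} reduces both estimates to bookkeeping of which values of $m$ occur in the relevant differences.

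For (1), I would note that, by the definition \eref{ze} of $\Xi^\mu$,
\ew
\h{\upxi}^\mu - \Xi^\mu = -\mu^{-6}\sum_{\substack{1 \le i \le 3\\ 4 \le j < k \le 7}} \si_{ijk}\,\dd y^{ijk},
\eew
a finite sum (over at most $18$ triples) of monomials with $m = 2$, hence of norm $\mu^6$; each term is therefore bounded by $\mu^{-6}\cdot Cr\cdot\mu^6 = Cr$, and summing yields (1).

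For (2), the crucial structural point, which I expect to be the main obstacle, is that no monomial occurring in $\ca{\upphi}^\mu - \h{\upxi}^\mu$ has $m = 3$.  To see this, recall from \eref{upphi} that
\ew
\ca{\upphi}^\mu - \h{\upxi}^\mu = \mu^{-6}\lt\{y^1\,\dd y^{147} + \dd\lt[f\lt(\tfrac{r}{\ep}\rt)\al_{\bf a}\rt]\rt\};
\eew
the monomial $y^1\,\dd y^{147}$ has $m = 2$, while from the proof of \lref{quadlem} one has $\al_{\bf a} = \dd y^1 \w \be_{\bf a} + \dd y^3 \w \ga_{\bf a}$ and $\dd\al_{\bf a} = \dd y^1 \w \dd\be_{\bf a} + \dd y^3 \w \dd\ga_{\bf a}$, and $\dd[f(\tfrac{r}{\ep})]$ is a (function) multiple of $\dd r$, which is a linear combination of $\dd y^1, \dd y^2, \dd y^5, \dd y^6$; hence every monomial of $\dd[f(\tfrac{r}{\ep})\al_{\bf a}]$ retains a $\dd y^1$ or $\dd y^3$ factor, forcing $m \le 2$ throughout.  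Since $\Xi^\mu$ is defined in \eref{ze} to absorb precisely the $m = 2$ monomials of $\ca{\upphi}^\mu - \h{\upxi}^\mu$, the remaining difference $\ca{\upphi}^\mu - \Xi^\mu = \mu^{-6}\sum \si_{ijk}\,\dd y^{ijk}$ runs only over monomials with $m \le 1$, each of norm at most $\mu^3$; so each term is bounded by $\mu^{-6}\cdot Cr\cdot\mu^3 = C\mu^{-3}r$, and summing gives (2).  The significance of the structural point is exactly this: a monomial with $m = 3$ would have norm $\mu^9$ and contribute a term of size $\sim\mu^3 r \to \infty$, destroying (2), whereas the borderline $m = 2$ monomials give only a bound of order $r$ rather than order $\mu^{-3}r$ and are therefore exactly those diverted into $\Xi^\mu$ and absorbed by part (1).
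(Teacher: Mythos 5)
Your argument is correct and is essentially the paper's own proof: the paper works in the \g-basis \(\lt(\dd y^1,\dd y^2,\dd y^3,\mu^{-3}\dd y^4,\ldots,\mu^{-3}\dd y^7\rt)\) of \(\h{\upxi}^\mu\), which is exactly your norm bookkeeping \(\lt\|\dd y^{ijk}\rt\|_{\h{\upxi}^\mu}=\mu^{3m}\), and likewise isolates the key structural fact that \(\ca{\upphi}^\mu-\Xi^\mu\) contains no monomial with all (or even two of the relevant) indices in \(\{4,\ldots,7\}\) beyond those absorbed into \(\Xi^\mu\), by \eref{upphi} and the explicit form of \(\al_{\bf a}\) in \lref{quadlem}. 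Nothing essential differs.
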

\begin{proof}
Observe that \(\h{\upxi}^\mu\) has the \g-basis \(\lt(\vth^1,...,\vth^7\rt) = \lt(\dd y^1, \dd y^2, \dd y^3, \mu^{-3}\dd y^4, \mu^{-3}\dd y^5, \mu^{-3}\dd y^6, \mu^{-3}\dd y^7\rt)\). \Wrt\ this basis, one can write:
\e\label{ze-vth}
\Xi^\mu = \vth^{123} + \vth^{145} + \vth^{167} - \vth^{246} + \vth^{257} + \vth^{347} + \vth^{356} + \sum_{\substack{1 \le i \le 3\\ 4 \le j < k \le 7}} \si_{ijk}\vth^{ijk}.
\ee
(1) then immediately follows from \eref{lin-bd}.

For (2), note that from \eref{ze}:
\ew
\ca{\upphi}^\mu - \Xi^\mu = \mu^{-6}\lt(\si_{123}\dd y^{123} + \sum_{\substack{1 \le i < j \le 3\\ 4 \le k \le 7}} \si_{ijk}\dd y^{ijk}\rt).
\eew
(The fact that there are no terms of the form \(\dd y^{ijk}\) with \(4 \le i < j < k \le 7\) follows from \eref{upphi} and the precise expression for \(\al_{\bf a}\) given in \lref{quadlem}.) Writing this in terms of the \g-basis \((\vth^i)\) gives:
\ew
\ca{\upphi}^\mu - \Xi^\mu = \mu^{-3}\lt(\mu^{-3}\si_{123} \vth^{123} + \sum_{\substack{1 \le i < j \le 3\\ 4 \le k \le 7}} \si_{ijk}\vth^{ijk}\rt),
\eew
from which, together with \eref{lin-bd}, (2) is immediately clear.

\end{proof}

Informally, \lref{upxi-vs-ze} says that (1) \(\Xi^\mu\) is of \g-type and `close' to \(\h{\upxi}^\mu\) if \(\ep>0\) is sufficiently small, uniformly in \(\mu\), and (2) the difference between \(\Xi^\mu\) and \(\ca{\upphi}^\mu\) is negligible as \(\mu\to\infty\).  Thus, insight into the behaviour of \(\ca{\upphi}^\mu\) as \(\mu\to\infty\) may be gained by studying \(\Xi^\mu\) as \(\mu\to\infty\).

\begin{Lem}
One can write:
\ew
g_{\Xi^\mu} = \sum_{1 \le i,j \le 3} (\de_{ij} + g_{ij}) \dd y^i \s \dd y^j + \mu^{-6}\sum_{4 \le i,j \le 7} (\de_{ij} + g_{ij}) \dd y^i \s \dd y^j
\eew
for some smooth functions \(g_{ij}\) on \(\ca{\M}_{int}\) independent of \(\mu\) and satisfying:
\e\label{lin-bd-2}
|g_{ij}| \le C_3r
\ee
for some constant \(C_3 > 0\) independent of \(\mu\), \(\ep\) and \(r\).
\end{Lem}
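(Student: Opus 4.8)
The plan is to use the smooth (indeed real-analytic) dependence of the \g-metric on its defining 3-form, together with a discrete symmetry of \(\Xi^\mu\) to rule out the off-block terms.  Recall from \eref{ze-vth} that, with respect to the \g-basis \(\lt(\vth^1,\dots,\vth^7\rt) = \lt(\dd y^1,\dd y^2,\dd y^3,\mu^{-3}\dd y^4,\mu^{-3}\dd y^5,\mu^{-3}\dd y^6,\mu^{-3}\dd y^7\rt)\) of \(\h{\upxi}^\mu\), one has \(\Xi^\mu = \vph_0 + \Psi\), where \(\vph_0 = \vth^{123} + \vth^{145} + \vth^{167} - \vth^{246} + \vth^{257} + \vth^{347} + \vth^{356}\) is the model \g-form in this coframe and \(\Psi = \sum_{1 \le i \le 3,\, 4 \le j < k \le 7} \si_{ijk}\vth^{ijk}\).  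The key point is that the components of \(\Psi\) in the coframe \(\lt(\vth^a\rt)\) are precisely the functions \(\si_{ijk}\); by \eref{lin-bd} these are bounded by \(Cr\) and, crucially, are \emph{independent of \(\mu\)}, so that the entire \(\mu\)-dependence of \(\Xi^\mu\) has been absorbed into the coframe \(\lt(\vth^a\rt)\).

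Next I would use that the assignment \(\vph \mapsto g_\vph\) is a smooth map from the open cone \(\ww[+]{3}\lt(\bb{R}^7\rt)^*\) of \g-forms to the cone of positive-definite symmetric bilinear forms (for instance via Bryant's identity \(g_\vph(u,v)\,vol_\vph = \tfrac{1}{6}(u\hk\vph)\w(v\hk\vph)\w\vph\) together with the smoothness of \(vol_\vph\)).  Writing the matrix of \(g_\vph\) relative to the coframe \(\lt(\vth^a\rt)\) as \(\lt(G_{ab}(\vph)\rt)_{a,b=1}^7\), for smooth functions \(G_{ab}\) defined on a fixed relatively compact neighbourhood \(\mc{K}\) of \(\vph_0\), one has \(G_{ab}(\vph_0) = \de_{ab}\), and each \(G_{ab}\) is Lipschitz on \(\mc{K}\) with a constant \(L\) depending only on \(\mc{K}\).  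Since \(r \le \ep\) on \(\ca{\M}_{int}\), choosing \(\ep>0\) sufficiently small — independently of \(\mu\) — guarantees \(\vph_0 + \Psi|_p \in \mc{K}\) for every \(p \in \ca{\M}_{int}\) and every \(\mu \ge 1\) (re-proving, incidentally, that \(\Xi^\mu\) is of \g-type; cf.\ the discussion following \lref{upxi-vs-ze}).  As \(\vph_0 + \Psi|_p\) is \(\mu\)-independent, so is \(g_{ab} := G_{ab}(\vph_0 + \Psi) - \de_{ab}\), and, using \eref{lin-bd}, \(|g_{ab}| = |G_{ab}(\vph_0 + \Psi) - G_{ab}(\vph_0)| \le L\|\Psi\| \le C_3 r\) for a constant \(C_3\) depending only on \(\mc{K}\) and \(C\), hence independent of \(\mu\), \(\ep\) and \(r\).

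To eliminate the off-block terms, I would observe that the linear involution \(\ta\) sending \(\lt(\vth^1,\vth^2,\vth^3,\vth^4,\vth^5,\vth^6,\vth^7\rt)\) to \(\lt(\vth^1,\vth^2,\vth^3,-\vth^4,-\vth^5,-\vth^6,-\vth^7\rt)\) satisfies \(\det\ta = 1\) and fixes both \(\vph_0\) and \(\Psi\): every monomial \(\vth^{ijk}\) occurring in \(\vph_0\) or in \(\Psi\) has exactly two indices in \(\{4,5,6,7\}\) and is therefore scaled by \((-1)^2 = 1\).  Hence \(\ta \in \Stab_{\GL_+(\bb{R}^7)}(\Xi^\mu) \cong \Gg_2 \pc \SO(7)\), so \(\ta\) preserves \(g_{\Xi^\mu}\); but a symmetric bilinear form invariant under \(\ta\) has no component pairing \(\vth^i\) (\(i \le 3\)) against \(\vth^j\) (\(j \ge 4\)), i.e.\ \(G_{ij}(\vph_0 + \Psi) = 0\) whenever \(1 \le i \le 3 < j \le 7\).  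Substituting \(\vth^a = \dd y^a\) for \(a \le 3\) and \(\vth^a = \mu^{-3}\dd y^a\) for \(a \ge 4\) into \(g_{\Xi^\mu} = \sum_{a,b} \lt(\de_{ab} + g_{ab}\rt)\,\vth^a \s \vth^b\) then yields exactly the claimed block decomposition, with \(|g_{ij}| \le C_3 r\) as above.  I expect the only real obstacle to be the bookkeeping of the \(\mu\)-scaling: one must recognise that all powers of \(\mu\) live in the coframe \(\lt(\vth^a\rt)\), so that \(\Xi^\mu\), expressed in that coframe, is a \(\mu\)-independent \(O(r)\)-small perturbation of the fixed model \g-form \(\vph_0\); once this is isolated, the block structure and the estimate follow formally from the smoothness of \(\vph \mapsto g_\vph\) and the symmetry \(\ta\).
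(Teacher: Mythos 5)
Your proposal is correct, and its second half takes a genuinely different route from the paper. For the first step (the existence of $\mu$-independent coefficients $g_{ab}$ with $|g_{ab}|\le C_3r$) you and the paper do essentially the same thing: the paper simply asserts that, since $\Xi^\mu$ written in the coframe $(\vth^a)$ is a $\mu$-independent perturbation of the model form with coefficients bounded by $Cr$, one can ``automatically'' write $g_{\Xi^\mu}=\sum_{a,b}(\de_{ab}+g_{ab})\vth^a\s\vth^b$ with the same properties; your Lipschitz-dependence argument for $\vph\mt g_\vph$ on a fixed compact neighbourhood of the model form is exactly the justification left implicit there, and it correctly yields constants independent of $\mu$, $\ep$ and $r$ (with the same smallness requirement on $\ep$ that the paper already imposes). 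The genuinely different part is the vanishing of the mixed block: the paper proves $g_{ij}=0$ for $1\le i\le 3<j\le 7$ by an explicit computation with Bryant's identity $g_\vph\,vol_\vph=[(-)\hk\vph]\w[(-)\hk\vph]\w\vph$, observing that $\Xi^\mu\in\ww{3}\mc{D}^*+\mc{D}^*\ts\ww{2}\mc{T}^*$ for $\mc{D}=\<\vth_1,\vth_2,\vth_3\?$, $\mc{T}=\<\vth_4,\dots,\vth_7\?$, and killing the relevant triple wedge by a rank count; you instead note that the involution $\ta=\Id_{\mc{D}^*}\ds(-\Id_{\mc{T}^*})$ lies in $\GL_+(7;\bb{R})$ and fixes $\Xi^\mu$, hence (by naturality of $\vph\mt g_\vph$) fixes $g_{\Xi^\mu}$, which forces the off-diagonal block to vanish. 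Both arguments rest on the same structural input — every monomial of $\Xi^\mu$ in the coframe $(\vth^a)$ has an even number (zero or two) of indices in $\{4,\dots,7\}$ — but your symmetry argument is cleaner and avoids the wedge computation, while the paper's argument is self-contained at the level of Bryant's formula and does not invoke equivariance of the metric assignment. One trivial slip: $\vth^{123}$ has \emph{zero} indices in $\{4,5,6,7\}$, not ``exactly two''; of course it is still fixed by $\ta$ since $(-1)^0=1$, so nothing is affected.
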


\begin{proof}
Recall \eref{ze-vth}:
\ew
\Xi^\mu = \vth^{123} + \vth^{145} + \vth^{167} - \vth^{246} + \vth^{257} + \vth^{347} + \vth^{356} + \sum_{\substack{1 \le i \le 3\\ 4 \le j < k \le 7}} \si_{ijk}\vth^{ijk}.
\eew
By this equation, one can automatically write:
\ew
g_{\Xi^\mu} = \sum_{1 \le i,j \le 7} (\de_{ij} + g_{ij}) \vth^i \s \vth^j
\eew
for some \(g_{ij}\) independent of \(\mu\) and satisfying \eref{lin-bd-2}. Recalling the definition of the \(\vth^i\), to complete the proof it suffices to prove that \(g_{ij} = 0\) if \(1 \le i \le 3\) and \(4 \le j \le 7\).

To this end, recall from \cite[\S2, Thm. 1]{MwEH} that:
\ew
g_{\Xi^\mu} vol_{\Xi^\mu} = \lt[(-)\hk \Xi^\mu \rt] \w \lt[(-)\hk \Xi^\mu \rt] \w \Xi^\mu.
\eew
Thus, it suffices to prove that for all \(1 \le i \le 3\) and \(4 \le j \le 7\):
\ew
\lt[\vth_i\hk \Xi^\mu \rt] \w \lt[\vth_j\hk \Xi^\mu \rt] \w \Xi^\mu = 0,
\eew
where \((\vth_1,...,\vth_7)\) is the basis of vectors dual to \((\vth^1,...,\vth^7)\). Define:
\ew
\mc{D} = \<\vth_1,\vth_2,\vth_3\? \et \mc{T} = \<\vth_4,...,\vth_7\?
\eew
so that \(\T\ca{\M}_{int} = \mc{D} \ds \mc{T}\). By examining \eref{ze-vth}, one can verify that:
\ew
\Xi^\mu \in \ww{3}\mc{D}^* + \mc{D}^* \ts \ww{2}\mc{T}^*.
\eew
It follows that for \(1 \le i \le 3\):
\ew
\vth_i \hk \Xi^\mu \in \ww{2}\mc{D}^* + \ww{2}\mc{T}^* \pc \ww{2}\T^*\ca{\M}_{int}
\eew
and that for \(4 \le j \le 7\):
\ew
\vth_j \hk \Xi^\mu \in \mc{D}^* \ts \mc{T}^* \pc \ww{2}\T^*\ca{\M}_{int}.
\eew
Thus:
\ew
\lt[\vth_i\hk \Xi^\mu \rt] \w \lt[\vth_j\hk \Xi^\mu \rt] \w \Xi^\mu \in \ww{6}\mc{D}^* \ts \mc{T}^* + \ww{4}\mc{D}^* \ts \ww{3}\mc{T}^* + \ww{2}\mc{D}^* \ts \ww{5}\mc{T}^*,
\eew
with all three summands vanishing since \(\operatorname{rank}\mc{D} = 3\) and \(\operatorname{rank}\mc{T} = 4\). This completes the proof.

\end{proof}

It follows at once that \(g_{\Xi^\mu}\) converges uniformly to:
\ew
g^\infty = \sum_{1 \le i,j \le 3} (\de_{ij} + g_{ij}) \dd y^i \s \dd y^j
\eew
on \(\ca{\M}_{int}\) as \(\mu \to \infty\), and moreover that \(g_{\Xi^\mu} - g^\infty\) is non-negative definite for all \(\mu\).  Once again, recalling that:
\ew
\ker\dd\h{q} = \<\frac{\del}{\del y^4}, \frac{\del}{\del y^5}, \frac{\del}{\del y^6}, \frac{\del}{\del y^7}\?.
\eew
(see \eref{ker-in-y's}) one sees that \(g^\infty\) is positive definite transverse to \(\ker\dd\h{q}\) on \(\ca{\M}_{int}\).

I now return to the metrics \(g^\mu = g_{\ca{\upphi}^\mu}\):

\begin{Prop}
The metrics \(g^\mu\) converge uniformly on \(\ca{\M}_{int}\) to \(g^\infty\) (as defined above), and moreover there exist constants \(\La_\mu' \to 1\) as \(\mu \to \infty\) such that \(g^\mu \ge \lt(\La_\mu'\rt)^2 g^\infty\) on \(\ca{\M}_{int}\), for all \(\mu \in [1,\infty)\).
\end{Prop}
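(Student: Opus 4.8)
The plan is to deduce everything from two facts already in hand: the algebraic assignment $\ph \mapsto g_\ph$ on $\Gg_2$ $3$-forms is smooth, hence Lipschitz on compact subsets of $\ww[+]{3}\lt(\bb{R}^7\rt)^*$, and the model form $\Xi^\mu$ has $g_{\Xi^\mu}$ under complete control (uniform convergence to $g^\infty$, with $g_{\Xi^\mu} - g^\infty \ge 0$, and block-diagonal with respect to the splitting $\T\ca{\M}_{int} = \mc{D} \ds \mc{T}$ into the index groups $\{1,2,3\}$ and $\{4,5,6,7\}$). First I would work on a single component $\ca{U}_{\bf a} \osr \ca{W}_{\bf a}$ in the coordinates $y^i$, on which $r$ is bounded ($r \in [\sqrt{\ep/2},\sqrt{\ep})$), using the $\Gg_2$-coframe $\lt(\vth^i\rt) = \lt(\dd y^1, \dd y^2, \dd y^3, \mu^{-3}\dd y^4, \dots, \mu^{-3}\dd y^7\rt)$ of $\h{\upxi}^\mu$, so that $g_{\h{\upxi}^\mu} = \sum_i \lt(\vth^i\rt)^{\ts 2}$ is represented by the identity matrix $I_7$ in this coframe. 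By \lref{upxi-vs-ze}, both $\ca{\upphi}^\mu$ and $\Xi^\mu$, written in the coframe $\lt(\vth^i\rt)$, lie within Euclidean distance $\le (C_1 + C_2)\sqrt{\ep}$ of the flat model $\ph_0$; shrinking $\ep$ once and for all (independently of $\mu$) keeps them inside a fixed compact neighbourhood $\mc{K}$ of $\ph_0$ in $\ww[+]{3}\lt(\bb{R}^7\rt)^*$. The Lipschitz bound on $\mc{K}$ together with \lref{upxi-vs-ze}(2) then gives, in this coframe, $g^\mu = g_{\Xi^\mu} + \tilde{E}^\mu$ with operator norm $\lt\|\tilde{E}^\mu\rt\| \le L C_2 \mu^{-3} r =: \de_\mu$, whereas $g_{\Xi^\mu} = I_7 + \bar{G}$ with $\bar{G}$ block-diagonal (supported on the $33$- and $44$-blocks), $\lt\|\bar{G}\rt\| \le C_3 r$.

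For the uniform convergence I would then pass to the \emph{fixed} coframe $\lt(\dd y^i\rt)$: there $g^\infty$ is represented by the matrix $\lt(I_3 + G_{33}\rt) \ds 0$, and the change of coframe multiplies the $34$- and $44$-blocks of $g^\mu$ by $\mu^{-3}$ and $\mu^{-6}$ respectively. Hence $g^\mu - g^\infty$ is represented by the symmetric block matrix with $33$-block $\tilde{E}^\mu_{33}$ (norm $\le \de_\mu$), $34$-block $\mu^{-3}\tilde{E}^\mu_{34}$, and $44$-block $\mu^{-6}\lt(I_4 + G_{44} + \tilde{E}^\mu_{44}\rt)$; since $r \le \sqrt{\ep}$ is bounded on $\ca{\M}_{int}$, each of these blocks tends to $0$ uniformly as $\mu \to \infty$, so $g^\mu \to g^\infty$ uniformly on $\ca{\M}_{int}$.

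For the lower bound I would verify $g^\mu \ge \lt(\La_\mu'\rt)^2 g^\infty$ in the coframe $\lt(\vth^i\rt)$ (positive semidefiniteness being coframe-independent). There $g^\mu - \lt(\La'\rt)^2 g^\infty$ is the symmetric block matrix with $33$-block $\lt(1 - \lt(\La'\rt)^2\rt)\lt(I_3 + G_{33}\rt) + \tilde{E}^\mu_{33}$, $34$-block $\tilde{E}^\mu_{34}$, and $44$-block $S := I_4 + G_{44} + \tilde{E}^\mu_{44}$. For $\ep$ small and $\mu \ge 1$ one has $S \ge \lt(1 - C_3 r - \de_\mu\rt) I_4 > 0$, so by the Schur complement criterion positivity of the whole matrix reduces to
\[
\lt(1 - \lt(\La'\rt)^2\rt)\lt(I_3 + G_{33}\rt) + \tilde{E}^\mu_{33} - \tilde{E}^\mu_{34}\, S^{-1}\, \lt(\tilde{E}^\mu_{34}\rt)^{\top} \ge 0 .
\]
Since $\lt\|\tilde{E}^\mu_{34}\, S^{-1}\, \lt(\tilde{E}^\mu_{34}\rt)^{\top}\rt\| \le 2\de_\mu^2$ and $\lt\|G_{33}\rt\| \le C_3\sqrt{\ep} < 1$, the left-hand side is bounded below by $\lt[\lt(1 - \lt(\La'\rt)^2\rt)\lt(1 - C_3\sqrt{\ep}\rt) - \de_\mu - 2\de_\mu^2\rt] I_3$ provided $\La' \le 1$. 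Replacing $\de_\mu$ by its uniform bound $\bar{\de}_\mu = L C_2 \mu^{-3}\sqrt{\ep}$ over $\ca{\M}_{int}$ and defining
\[
\lt(\La_\mu'\rt)^2 = \max\lt\{0,\ 1 - \frac{\bar{\de}_\mu + 2\bar{\de}_\mu^2}{1 - C_3\sqrt{\ep}}\rt\},
\]
one obtains $\La_\mu' \in [0,1]$, $\La_\mu' \to 1$ as $\mu \to \infty$ (since $\bar{\de}_\mu \to 0$), and $g^\mu \ge \lt(\La_\mu'\rt)^2 g^\infty$ for every $\mu \in [1,\infty)$ — the degenerate case $\La_\mu' = 0$ being trivial since $g^\mu$ is a genuine Riemannian metric and $g^\infty \ge 0$. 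The constants $C_1, C_2, C_3, L$ being $\mu$-independent (and taken uniform over the finitely many components of $\ca{\M}_{int}$), $\La_\mu'$ is well defined globally.

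The one genuinely delicate point is the $34$-block (the $\mc{D}$-to-$\mc{T}$ component) of $g^\mu$: it is invisible to the block-diagonal model $g_{\Xi^\mu}$ and enters only through the error term $\tilde{E}^\mu$, so one must confirm — which is exactly what the Schur complement achieves — that after passing to the degenerating $\lt(\dd y^i\rt)$-scaling it is suppressed by enough powers of $\mu$ (a net $\mu^{-6}$ after the Schur reduction, matched against the $\mu^{-6}$ of the $44$-block) for positivity transverse to $\ker \dd\h{q}$ to survive in the limit $\mu \to \infty$. Everything else is routine: uniform estimates over the bounded region $\ca{\M}_{int}$, carried out with $\mu$-independent constants.
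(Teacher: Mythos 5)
Your argument is correct, and it rests on the same two pillars as the paper's proof: the quantitative control of \(\Xi^\mu\) from \lref{upxi-vs-ze} together with the block structure of \(g_{\Xi^\mu}\), and the Lipschitz dependence of \(g_\ph\) on \(\ph\) near the model form (the paper's \eref{De0}, which is exactly your ``Lipschitz on a compact neighbourhood'' step, carried out in the \(\Gg_2\)-coframe \(\lt(\vth^i\rt)\) of \(\h{\upxi}^\mu\)). Where you diverge is the finishing move. The paper converts the bound \(\lt\|g^\mu - g_{\Xi^\mu}\rt\|_{\Xi^\mu} \le \De_0\lt\|\ca{\upphi}^\mu - \Xi^\mu\rt\|_{\Xi^\mu} \to 0\) into the two conclusions abstractly, via \lref{LC-vs-norm}: dominating the error by a multiple of \(g_{\Xi^\mu}\) itself gives \(g^\mu \ge \lt(1 - \eta_\mu\rt)g_{\Xi^\mu} \ge \lt(1-\eta_\mu\rt)g^\infty\) in two lines (the collapsed directions are handled automatically because the error is measured in the \(\mu\)-adapted norm), and uniform convergence follows by comparing against a fixed reference metric. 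You instead write everything as explicit \(3{+}4\) block matrices in the \(\vth\)- and \(\dd y\)-coframes and prove positivity by a Schur-complement criterion. This is valid, and it does buy a marginally sharper \(\La_\mu'\) plus an explicit explanation of why the \(\mc{D}\)--\(\mc{T}\) cross term is harmless; but note it is strictly more than is needed: since \(\lt\|\tilde{E}^\mu\rt\| \le \de_\mu\) in the \(\vth\)-coframe and \(g_{\Xi^\mu} \ge \lt(1 - C_3\sqrt{\ep}\rt)I_7\) in that same coframe, the one-line domination \(\tilde{E}^\mu \ge -\de_\mu\lt(1-C_3\sqrt{\ep}\rt)^{-1}g_{\Xi^\mu}\) already yields \(g^\mu \ge \lt(1 - c\de_\mu\rt)g_{\Xi^\mu} \ge \lt(1-c\de_\mu\rt)g^\infty\), which is precisely the paper's route, so the cross-block term never needs separate treatment. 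Your handling of small \(\mu\) via the truncation \(\max\{0,\cdot\}\) is a legitimate (and slightly cleaner) way to get the statement for all \(\mu \in [1,\infty)\), where the paper contents itself with \(\mu\) sufficiently large; and your block-rescaling argument for uniform convergence in the fixed \(\dd y\)-coframe is equivalent to the paper's reference-metric comparison on the bounded region \(\ca{\M}_{int}\).
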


\begin{proof}
Let \(\lt(a^1,...,a^7\rt)\) denote the canonical basis of \(\lt(\bb{R}^7\rt)^*\) and consider the \g\ 3-form:
\ew
\vph_0 = a^{123} + a^{145} + a^{167} + a^{246} - a^{257} - a^{347} - a^{356}.
\eew
Since the assignment \(\ph \in \ww[+]{3}\lt(\bb{R}^7\rt)^* \mt g_\ph \in \ss[+]{2}\lt(\bb{R}^7\rt)^*\) of a \g\ 3-form \(\ph\) to the metric it induces is smooth, there exist constants \(\de_1, \De_0 > 0\) such that if \(\lt|\ph - \vph_0\rt|_{\vph_0} < \de_1\), then:
\e\label{De0}
\lt|g_\ph - g_{\vph_0}\rt|_{\vph_0} \le \De_0\lt|\ph - \vph_0\rt|_{\vph_0}.
\ee
Since every \g\ 3-form on a 7-manifold is pointwise isomorphic to \(\vph_0\), it follows that \eref{De0} holds for general \g\ 3-forms on manifolds, with the same values of \(\de_1\) and \(\De_0\).

The proof now proceeds via repeated application of \eref{De0}, together with the following result, taken from the author's recent paper \cite{AGCRfOvSF}:
\begin{Lem}[{\cite[Lem.\ 7.2]{AGCRfOvSF}}]\label{LC-vs-norm}
Let \((\bb{A},g)\) be a finite-dimensional inner product space and write \(\|-\|_g\) for the norm on \(\ss{2}\bb{A}^*\) induced by \(g\).  Then for any \(h \in \ss{2}\bb{A}^*\):
\ew
h \le \|h\|_g\cdot g.
\eew
\end{Lem}
Firstly, by combining \lref{upxi-vs-ze}(1) with \eref{De0}, for all \(\ep \le \frac{\de_1}{C_1}\) (a condition which is independent of \(\mu\)):
\ew
\lt\|g_{\Xi^\mu} - g_{\h{\upxi}^\mu}\rt\|_{\h{\upxi}^\mu} \le \De_0 \lt\|\Xi^\mu - \h{\upxi}^\mu\rt\|_{\h{\upxi}^\mu} \le \De_0C_1\ep.
\eew
Applying \lref{LC-vs-norm}, it follows that:
\ew
g_{\Xi^\mu} - g_{\h{\upxi}^\mu} \le \De_0 C_1 \ep g_{\h{\upxi}^\mu} \et g_{\h{\upxi}^\mu} - g_{\Xi^\mu} \le \De_0 C_1 \ep g_{\h{\upxi}^\mu}
\eew
and hence:
\ew
(1 - \De_0 C_1 \ep) g_{\h{\upxi}^\mu} \le g_{\Xi^\mu} \le (1 + \De_0 C_1 \ep) g_{\h{\upxi}^\mu}.
\eew
In particular, for \(\ep < \frac{1}{\De_0 C_1}\) (a condition which is independent of \(\mu\)) \(g_{\h{\upxi}^\mu}\) and \(g_{\Xi^\mu}\) are Lipschitz equivalent on \(\ca{\M}_{int}\), uniformly in \(\mu\). Now by \lref{upxi-vs-ze}(2): \(\lt\|\ca{\upphi}^\mu - \Xi^\mu\rt\|_{\h{\upxi}^\mu} \to 0\) as \(\mu\to\infty\) and hence, by the Lipschitz equivalence just established, \(\lt\|\ca{\upphi}^\mu - \Xi^\mu\rt\|_{\Xi^\mu} \to 0\) as \(\mu \to \infty\).  Using \eref{De0} again, one sees that for all \(\mu\) sufficiently large:
\ew
\lt\|g^\mu - g_{\Xi^\mu}\rt\|_{\Xi^\mu} \le \De_0 \lt\|\ca{\upphi}^\mu - \Xi^\mu\rt\|_{\Xi^\mu}
\eew
and hence, by using \lref{LC-vs-norm} again:
\ew
g^\mu \ge \lt(1 - \De_0\lt\|\ca{\upphi}^\mu - \Xi^\mu\rt\|_{\Xi^\mu}\rt) g_{\Xi^\mu} \ge \lt(1 - \De_0\lt\|\ca{\upphi}^\mu - \Xi^\mu\rt\|_{\Xi^\mu}\rt) g^\infty,
\eew
where in the final line I have used that \(g_{\Xi^\mu} \ge g^\infty\) for all \(\mu\) as above. Thus, setting \(\La_\mu' = \sqrt{1 - \De_0\lt\|\ca{\upphi}^\mu - \Xi^\mu\rt\|_{\Xi^\mu}}\) for all \(\mu\) sufficiently large, one has \(\La_\mu' \to 1\) as required. Therefore, to conclude the proof, it suffices to prove that \(g^\mu \to g^\infty\) uniformly on \(\ca{\M}_{int}\) as \(\mu\to\infty\).

To this end, fix a reference metric \(g\) on \(\ca{\M}_{int}\). Since \(g_{\Xi^\mu} \to g^\infty\) uniformly, it follows that \(\lt\|g_{\Xi^\mu}\rt\|_g \le 2\lt\|g^\infty\rt\|_g = D\) for all \(\mu\) sufficiently large.  Thus by applying \lref{LC-vs-norm} one final time it follows that:
\ew
g_{\Xi^\mu} \le D g
\eew
for all \(\mu\) sufficiently large. Thus \(g_{\Xi^\mu} \ge D^{-3}g\) when acting on 3-forms. In particular:
\ew
\lt\|g^\mu - g_{\Xi^\mu}\rt\|_g &\le D^3\lt\|g^\mu - g_{\Xi^\mu}\rt\|_{\Xi^\mu}\\
&\le D^3\De_0 \lt\|\ca{\upphi}^\mu - \Xi^\mu\rt\|_{\Xi^\mu} \to 0 \text{ as } \mu \to \infty.
\eew
Thus, since \(g_{\Xi^\mu}\) tends to \(g^\infty\) uniformly, it follows that \(g^\mu\) also tends to \(g^\infty\) uniformly. This completes the proof.

\end{proof}

\noindent In summary, it has been shown that one may define a Riemannian semi-metric \(g^\infty\) on \(\lt.\ca{\M}\m\osr\coprod_{{\bf a}\in\fr{A}}\ca{\fr{W}}_{\bf a}\rt.\) with kernel precisely \(\ker\dd\h{q}\) such that on each \(\ca{\M}^{(k)}\), \(g^\mu \to g^\infty\) uniformly and there exist constants \(\La_\mu(k) \ge 0\)  such that:
\ew
\lim_{\mu \to \infty} \La_\mu(k) = 1 \et g^\mu \ge \La_\mu(k)^2 g^\infty \text{ for all } \mu \in [1,\infty).
\eew

I now explain how to define the limiting stratified Riemannian semi-metric \(\h{g}^\infty\) on all of \(\h{\M}\).  Using \(\rh\), one may identify \(\lt. \ca{\M} \m\osr \coprod_{{\bf a} \in \fr{A}} \ca{\fr{W}}_{\bf a} \rt.\) with a subset of \(\h{\M}\).  By examining \eref{g-infty-inner}, one may verify that \(g^\infty\) can be smoothly extended to a semi-metric on all of \(\h{\M}\).  Now recall that the strata of \(\Si\) (the stratification of \(\h{\M}\) induced by \(\h{q}\)) consist firstly of the preimage under \(\h{q}\) of the smooth locus of \(B\), secondly of the smooth loci of the singular fibres of \(\h{q}\) and thirdly of the components of the singular locus \(\h{S} \pc \h{\M}\).  On the first two types of strata of \(\Si\), simply define \(\h{g}^\infty\) to be the restriction of the semi-metric \(g^\infty\) to the stratum.  On the strata \(\h{S}_{\bf a}\), define \(\h{g}^\infty\) to be the semi-metric:
\ew
g^\infty_{\bf a} = \up^\frac{4}{3} \lt(\dd y^3\rt)^{\ts 2},
\eew
where \(\up\) is defined in \pref{tld-om-const}.  (The motivation for this definition will become apparent in the next section.)  Again, this can be extended to a semi-metric on all of \(\h{\M}\); indeed, it is easy to verify that:
\ew
\up^\frac{4}{3} g^\infty|_{\h{S}_{\bf a}} = g^\infty_{\bf a}.
\eew
Thus \(\h{g}^\infty\) defines a stratified Riemannian semi-metric on \(\h{\M}\).  Moreover, since \(g^\0\) (and hence \(\up^\frac{4}{3}g^\0\)) is positive definite transverse to \(\ker\dd\h{q}\), it follows that \(\h{g}^\infty\) is regular \wrt\ the stratified distribution \(\mc{D}\).

Thus, in summary, the stratified Riemannian semi-metric \(\h{g}^\0\) has been defined on all of \(\h{\M}\), and has been shown to satisfy \condsref{CT-Cond}(1)\&(2).

\subsection{Estimates on \(\h{g}^\infty\)}

The purpose of this subsection is to verify \condsref{CT-Cond}(3)--(5).  Specifically:
\begin{Prop}\label{hyps-2-4}
~
\begin{enumerate}
\setcounter{enumi}{2}
\item
\ew
\lim_{k \to \infty} ~ \limsup_{\mu \to \infty} ~\max_{{\bf a} \in \fr{A}} ~ \sup_{p \in S^1} ~ \diam_{d^\mu} \lt[ \ca{\fr{f}}_{\bf a}^{-1}(\{p\}) \cap \ca{W}_{{\bf a},k} \rt] = 0;
\eew

\item
\ew
\lim_{k \to \infty} ~ \max_{{\bf a} \in \fr{A}} ~ \sup_{p \in S^1} ~ \diam_{d^\infty}\lt[ \h{\fr{f}}_{\bf a}^{-1}(\{p\}) \cap \h{W}_{{\bf a},k} \rt] = 0;
\eew

\item
\ew
\lim_{k \to \infty} ~ \limsup_{\mu \to \infty} ~\max_{{\bf a} \in \fr{A}} ~ \sup_{\del_{{\bf a},k}} \lt| d^\mu - d^\infty \rt| = 0,
\eew
where \(d^\infty\) is the metric on \(\h{\M}\) induced by \(\h{g}^\infty\) (regarded as a metric on \(\del_{{\bf a},k}\) using the identification \(\rh\)) and, for simplicity of notation, I write \(\del_{{\bf a},k}\) for the subset \(\del \ca{W}_{{\bf a},k} \overset{\rh}{\cong} \del \h{W}_{{\bf a},k}\).

\end{enumerate}
\end{Prop}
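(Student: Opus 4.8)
The plan is to establish the three assertions (3), (4), (5) in turn, working throughout in the local models $\ca{W}_{\bf a}$ and $\ca{\M}^{(k)}$.  The key preliminary is the shape of $g^\mu$ on each surgery region $\ca{W}_{\bf a}$.  There $\ca{\ph}^\mu = \ca{\ze}^\mu = \mu^{-3}\lt(\fr{H}^\mu\rt)^*\ze^\mu$, so, since the metric induced by a \g\ 3-form is homogeneous of degree $\tfrac{2}{3}$ under scaling and commutes with pullback, $g^\mu = g_{\mu^{-6}\ca{\ze}^\mu} = \mu^{-6}\lt(\fr{H}^\mu\rt)^*g_{\ze^\mu}$, with $g_{\ze^\mu}$ close to $g_\ze$ on $R_\mu$ by (the proof of) \lref{tech-lem-for-res}.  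Applying \lref{scaling-lem}(2) to $\ze$ (with $\om = \ca{\om}_t$, $\Om = \tld{\Om}$) and pulling back by $\fr{H}^\mu$, one finds that on $\ca{W}_{\bf a}$
\ew
g^\mu = \nu^{\frac{4}{3}}\lt(\dd y^3\rt)^{\ts2} + \mu^{-6}\nu^{-\frac{2}{3}}\lt[\lt(\dd y^4\rt)^{\ts2} + \lt(\dd y^7\rt)^{\ts2} + \lt(\fr{h}^\mu\rt)^*g_{\ca{\om}_t}\rt] + E^\mu,
\eew
where $E^\mu$ is an error controlled by \lref{tech-lem-for-res} (of size $O\lt(k^{-3}\rt) + O\lt(\mu^{-3}\rt)$ on every region arising below, and in fact vanishing near the locus $\nu = \up$ once $t$ is small), and $\nu \in [\up,1]$ depends only on $\mu^6\lt|w^1\rt|^2 + \lt|w^2\rt|^2$, equalling $\up$ precisely where $\mu^6\lt|w^1\rt|^2 + \lt|w^2\rt|^2 = \fr{r}_t^2$, by \pref{tld-om-const}.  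I will also use that $\mu^{-6}\lt(\fr{h}^\mu\rt)^*g_{\ca{\om}_t}$ is bounded uniformly in $\mu$ and that $\ca{\om}_t = \h{\om}$ off a ball of radius $O(\ep)$; these are the only facts about the \EH\ metrics required.

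For (3), carry the slice $\ca{\fr{f}}_{\bf a}^{-1}(\{p\}) \cap \ca{W}_{{\bf a},k}$ through $\fr{H}^\mu$: it is identified with $\bb{T}^2_{y^4,y^7} \x \tld{X}\lt(\tfrac{1}{2}\ep, k\mu^{-1}\rt)$, and since $g^\mu = \mu^{-6}\lt(\fr{H}^\mu\rt)^*g_{\ze^\mu}$ its $d^\mu$-diameter is $\mu^{-3}$ times the $g_{\ze^\mu}$-diameter of the target.  On the target the $\lt(\dd y^3\rt)^{\ts2}$-term drops out; bounding $g_{\ze^\mu} \le (1+o(1))\up^{-\frac{2}{3}}\lt[\lt(\dd y^4\rt)^{\ts2} + \lt(\dd y^7\rt)^{\ts2} + g_{\ca{\om}_t}\rt]$ together with $\diam_{g_{\ca{\om}_t}}\tld{X}\lt(\tfrac{1}{2}\ep, k\mu^{-1}\rt) = O\lt(\lt(k\mu^{-1}\rt)^{-3}\sqrt{\ep}\rt) + O(\ep)$ (the ellipse is fat and $\ca{\om}_t$ is flat off a small ball) yields a $d^\mu$-diameter of $O\lt(k^{-3}\sqrt{\ep}\rt) + O\lt(\mu^{-3}\rt)$ — the power of $\mu$ cancels in the first term.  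Hence $\limsup_{\mu\to\0}$ of the diameter is $O\lt(k^{-3}\rt) \to 0$ as $k \to \0$.  Assertion (4) is the easier analogue for $\h{g}^\infty$ on $\h{W}_{{\bf a},k}$: since $\ker g^\infty = \ker\dd\h{q} = \<\frac{\del}{\del y^4},\frac{\del}{\del y^5},\frac{\del}{\del y^6},\frac{\del}{\del y^7}\?$ on $\h{U}_{\bf a}$, the $\h{g}^\infty$-distance between two points of a slice $\{y^3 = p\} \cap \h{W}_{{\bf a},k}$ is controlled by the $(y^1,y^2)$-disc over which they lie, which has radius $O\lt(k^{-3}\rt)$; so that $d^\infty$-diameter is $O\lt(k^{-3}\rt) \to 0$ as well.

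Assertion (5) is the main obstacle, requiring matching two-sided bounds for $d^\mu$ and $d^\infty$ on $\del_{{\bf a},k}$.  The lower bound rests on the global inequality $g^\mu \ge \lt(\up^{\frac{4}{3}} - o(1)\rt)\ca{\fr{f}}^*\lt(g_\E\rt)$ on $\ca{\M}$: on $\ca{\M}^{(k)}$ this follows from \condsref{CT-Cond}(2) once one checks $\rh^*\h{g}^\infty \ge \up^{\frac{4}{3}}\ca{\fr{f}}^*\lt(g_\E\rt)$ stratum by stratum — the only non-trivial case being the limiting metric \eref{g-infty-inner}, where the $y^1\dd y^1 \s \dd y^3$ cross-term is absorbed using $\lt(y^1\rt)^2 \le 4\lt(1-\up^2\rt)$, valid once $\ep$ is small — and on $\ca{W}_{{\bf a},k}$ it follows from the shape of $g^\mu$ above and $\nu \ge \up$ (the cross-term being negligible, as $\lt|w^1\rt| = O\lt(k^{-3}\rt)$ there).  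Since the $\h{q}$-images of points of $\del_{{\bf a},k}$ lie $O\lt(k^{-3}\rt)$-close to the singular circle of $B$, along which $\h{\mc{L}} = \up^{\frac{4}{3}}\lt(\dd y^3\rt)^{\ts2}$ by \eref{mcL}, one computes $d^\infty(x,y) = \up^{\frac{2}{3}}d_{S^1}\lt(\ca{\fr{f}}x,\ca{\fr{f}}y\rt) + O\lt(k^{-3}\rt)$, whence $d^\mu \ge d^\infty - O\lt(k^{-3}\rt)$ on $\del_{{\bf a},k}$.  For the reverse bound, given $x,y\in\del_{{\bf a},k}$ I build an explicit competitor: move within the slice $\{\ca{\fr{f}} = \ca{\fr{f}}x\}\cap\ca{W}_{{\bf a},k}$ to a point where $\mu^6\lt|w^1\rt|^2 + \lt|w^2\rt|^2 = \fr{r}_t^2$ (so $\nu = \up$, e.g.\ $w^1 = 0$, $\lt|w^2\rt| = \fr{r}_t$), at cost $O\lt(k^{-3}\rt) + O\lt(\mu^{-3}\rt)$ by the estimate just proven for (3); then vary $y^3$ alone, along which $g^\mu\lt(\tfrac{\del}{\del y^3},\tfrac{\del}{\del y^3}\rt) = \up^{\frac{4}{3}}$ exactly (the cut-off $\si$ vanishing near $\{r = \fr{r}_t\}$ for small $t$), at cost $\up^{\frac{2}{3}}d_{S^1}\lt(\ca{\fr{f}}x,\ca{\fr{f}}y\rt)$; then move within $\{\ca{\fr{f}} = \ca{\fr{f}}y\}\cap\ca{W}_{{\bf a},k}$ to $y$, again at cost $O\lt(k^{-3}\rt) + O\lt(\mu^{-3}\rt)$.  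This gives $d^\mu \le d^\infty + O\lt(k^{-3}\rt) + O\lt(\mu^{-3}\rt)$, so $\sup_{\del_{{\bf a},k}}\lt|d^\mu - d^\infty\rt| = O\lt(k^{-3}\rt) + O\lt(\mu^{-3}\rt)$ and $\limsup_{\mu\to\0}$ is $O\lt(k^{-3}\rt) \to 0$ as $k \to \0$.  Uniformity over ${\bf a}\in\fr{A}$ is automatic, the local models being interchanged by the volume-preserving diffeomorphisms $f_{\bf a}, g_{\bf a}$.

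The genuinely delicate points are: (i) the bookkeeping of the several error terms — from $\ze^\mu$ versus $\ze$ (size $O\lt(\mu^{-3}\rt)$), from the cut-off $\mu^{-6}\lt(\fr{H}^\mu\rt)^*\si$, and from $\nu$ versus $\up$ away from $\{r = \fr{r}_t\}$ — and checking each is uniform in ${\bf a}$ and $k$; and (ii) the conceptual core of (5), namely that collapsing the surgery region does not merely shrink it but rescales the effective length of the base circle by the factor $\up^{\frac{2}{3}}$, which is exactly why $d^\mu$ converges to $d^\infty$ (equivalently, to the \sqfs\ of \eref{mcL}) and not to the naive Euclidean metric.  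This hinges on the sharpness clause ``$\nu = \up$ on $\{r = \fr{r}_t\}$'' of \pref{tld-om-const}, which is precisely why that sharpness was built in.
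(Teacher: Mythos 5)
Your proposal is correct and follows essentially the same route as the paper's proof: part (3) via the homothety \(\fr{H}^\mu\) and the \(\mu^{-3}\) rescaling with a Euclidean comparison on the model ellipse, part (4) via the kernel of \(g^\infty\) and the \(O(k^{-3})\) extent of the \((y^1,y^2)\)-disc, and part (5) via a two-sided comparison of both \(d^\mu\) and \(d^\infty\) with \(\up^{\frac{2}{3}}d_\E\circ\lt(\ca{\fr{f}}\x\ca{\fr{f}}\rt)\) (the paper's \(d_k\)), combining a global bound \(g^\mu \ge (1-o(1))\,\up^{\frac{4}{3}}\ca{\fr{f}}^*g_\E\) with competitor paths run along the sharp locus \(\{\nu = \up\}\) and the diameter estimates of (3)--(4) to control the endpoint corrections. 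The only deviations --- obtaining the lower bound on \(\ca{\M}^{(k)}\) from \condsref{CT-Cond}(2) together with the check \(\h{g}^\infty \ge \up^{\frac{4}{3}}\ca{\fr{f}}^*g_\E\) rather than re-estimating \(g^\mu\) region by region as the paper does, and placing the intermediate point on the sharp locus at \(w^1 = 0\), \(|w^2| = \fr{r}_t\) instead of the paper's choice --- are organisational and harmless.
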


\begin{proof}
~

(3) For each \(p\in S^1\), write:
\ew
\diam\lt[\ca{\fr{f}}_{\bf a}^{-1}(\{p\}) \cap \ca{W}_{{\bf a},k}, g^\mu\rt]
\eew
for the diameter of the space \(\ca{\fr{f}}_{\bf a}^{-1}(\{p\}) \cap \ca{W}_{{\bf a},k}\) \wrt\ the intrinsic metric induced by the Riemannian metric \(g^\mu\), i.e.\ the metric defined using paths contained entirely within \(\ca{\fr{f}}_{\bf a}^{-1}(\{p\}) \cap \ca{W}_{{\bf a},k}\). Then clearly:
\ew
\diam_{d^\mu}\lt[\ca{\fr{f}}_{\bf a}^{-1}(\{p\}) \cap \ca{W}_{{\bf a},k} \rt] \le \diam\lt[\ca{\fr{f}}_{\bf a}^{-1}(\{p\}) \cap \ca{W}_{{\bf a},k}, g^\mu\rt]
\eew
and so it suffices to prove that:
\ew
\lim_{k \to \infty} ~ \limsup_{\mu \to \infty} ~\max_{{\bf a} \in \fr{A}} ~ \sup_{p \in S^1} ~ \diam \lt[ \ca{\fr{f}}_{\bf a}^{-1}(\{p\}) \cap \ca{W}_{{\bf a},k}, g^\mu \rt] = 0.
\eew

Initially, fix \(k\in[1,\infty)\), \({\bf a} \in \fr{A}\) and consider \(\mu \ge k\). Recall from \eref{ca-ph-mu-homthty} that there is a homothety:
\ew
\fr{H}^\mu: \ca{W}_{\bf a} \cong \lt(T\rt)_{y^3,y^4,y^7} \x \lt(\tld{X}_{\frac{1}{2}\ep}\rt)_{y^1,y^2,y^5,y^6} \to \lt(T_\mu\rt)_{y^3,y^4,y^7} \x \tld{X}\lt(\tfrac{1}{2}\ep, \mu^{-1}\rt)_{y^1,y^2,y^5,y^6}
\eew
(given by rescaling the \(y^1\), \(y^2\) and \(y^3\) directions by \(\mu^3\)) which identifies the \g\ 3-form \(\ca{\ph}^\mu\) on the left-hand side with the \g\ 3-form \(\mu^{-3} \ze^\mu = \mu^{-3}\lt(\ze + \mu^{-3}\si\rt)\) on the right-hand side, where \(\ze\) and \(\si\) are defined in \erefs{ze-defn} and \eqref{si-defn} respectively. In particular, the homothety identifies \(\ca{\upphi}^\mu = \mu^{-6}\ca{\ph}^\mu\) on the left-hand side with the \g\ 3-form \(\mu^{-9}\ze^\mu\) on the right-hand side.

Note also that there is a natural map \(f: T_\mu \to \mu^3S^1\) given by projecting onto the first coordinate (i.e.\ \(y^3\)).  Given \(p \in S^1\), write \(T_{\mu,p}\) for the fibre of this map over the point \(\mu^3p \in \mu^3S^1\), which can naturally be identified with the torus \(\bb{T}^2\), irrespective of whether \(T_\mu = \bb{T}^3_\mu\) or \(\tld{\bb{T}^3_\mu}\).  Then, using the diffeomorphism invariance of intrinsic diameter, one may compute that:
\e\label{diam-1-1}
\diam\lt[\ca{\fr{f}}_{\bf a}^{-1}(\{p\}) \cap \ca{W}_{{\bf a},k}, g^\mu\rt] &= \diam\lt[\tld{X}\lt(\frac{1}{2}\ep,\frac{k}{\mu}\rt) \x T_{\mu,p} , \mu^{-9}\ze^\mu\rt]\\
&= \mu^{-3}\diam\lt[\tld{X}\lt(\frac{1}{2}\ep,\frac{k}{\mu}\rt) \x T_{\mu,p}, \ze^\mu\rt].
\ee
Now, as seen in \lref{tech-lem-for-res}, \(\lt\|\mu^{-3}\si\rt\|_{\ze}\) is bounded on \(\tld{X}\lt(\frac{1}{2}\ep,\mu^{-1}\rt) \x T_\mu\) by some absolute constant \(C'>0\) independent of \(\mu\), which may be made arbitrarily small by choosing \(\ep\) sufficiently small (independent of \(\mu\)) and \(\mu\) sufficiently large.  \Wlg\ one may assume that \(C' < \de_1\) for \(\de_1\) as in \eref{De0} and thus:
\ew
\|g_{\ze^\mu} - g_\ze\|_\ze \le \De_0C' \text{ on } \tld{X}\lt(\tfrac{1}{2}\ep,\mu^{-1}\rt) \x T_\mu.
\eew
Since \(\tld{X}\lt(\frac{1}{2}\ep,\frac{k}{\mu}\rt) \x T_{\mu,p} \pc \tld{X}\lt(\frac{1}{2}\ep,\mu^{-1}\rt) \x T_\mu\) it follows by \lref{LC-vs-norm} that:
\ew
g_{\ze^\mu} \le (1+\De_0C')g_\ze \text{ on } \tld{X}\lt(\frac{1}{2}\ep,\frac{k}{\mu}\rt) \x T_{\mu,p}.
\eew
Therefore:
\e\label{diam-1-2}
\diam\lt[\tld{X}\lt(\frac{1}{2}\ep,\frac{k}{\mu}\rt) \x T_{\mu,p}, \ze^\mu\rt] \le \sqrt{1+ \De_0C'}\diam\lt[\tld{X}\lt(\frac{1}{2}\ep,\frac{k}{\mu}\rt) \x T_{\mu,p}, \ze\rt].
\ee
Now outside the region \(\tld{X}_{\frac{1}{2}\ep} = \tld{X}\lt(\tfrac{1}{2}\ep,1\rt)\), the metric induced by \(\ze\) is Euclidean. It follows that:
\e\label{diam-1-3}
\diam\lt[\tld{X}\lt(\frac{1}{2}\ep,\frac{k}{\mu}\rt) \x T_{\mu,p}, \ze\rt] \le C''\lt(\frac{\mu}{k}\rt)^3
\ee
for some \(C''>0\) independent of \(k\), \(\mu\) and \(p\). Combining \erefs{diam-1-1}, \eqref{diam-1-2} and \eqref{diam-1-3} gives:
\ew
\diam\lt[\ca{\fr{f}}_{\bf a}^{-1}(\{p\}) \cap \ca{W}_{{\bf a},k}, g^\mu\rt] \le \mu^{-3}\sqrt{1+\De_0C'}C''\lt(\frac{\mu}{k}\rt)^3 = C''\sqrt{1+\De_0C'}k^{-3}.
\eew
Taking supremum over \(p\in S^1\), maximum over \({\bf a} \in \fr{A}\), limit superior over \(\mu\to\infty\) and the limit over \(k\to\infty\) then gives the required result.

(4) Since every point of \(\h{S}_{\bf a}\) is a limit point of \(\h{W}_{{\bf a},k} \osr \h{S}_{\bf a}\), one clearly has:
\ew
\lim_{k \to \infty} ~ \max_{{\bf a} \in \fr{A}} ~ \sup_{p \in S^1} ~ \diam_{d^\infty}\lt[ \h{\fr{f}}_{\bf a}^{-1}(\{p\}) \cap \h{W}_{{\bf a},k} \rt] = \lim_{k \to \infty} ~ \max_{{\bf a} \in \fr{A}} ~ \sup_{p \in S^1} ~ \diam_{d^\infty}\lt[ \h{\fr{f}}_{\bf a}^{-1}(\{p\}) \cap \lt. \h{W}_{{\bf a},k} \m\osr \h{S}_{\bf a} \rt. \rt].
\eew

Now \(\h{g}^\infty\) is simply given by \(g^\infty\) on \(\lt. \h{\M} \m\osr \h{S} \rt.\).  Thus, one has:
\ew
\lim_{k \to \infty} ~ \max_{{\bf a} \in \fr{A}} ~ \sup_{p \in S^1} ~ \diam_{d^\infty}\lt[ \h{\fr{f}}_{\bf a}^{-1}(\{p\}) \cap \lt. \h{W}_{{\bf a},k} \m\osr \h{S}_{\bf a} \rt. \rt] \le \lim_{k \to \infty} ~ \max_{{\bf a} \in \fr{A}} ~ \sup_{p \in S^1} ~ \diam \lt[ \h{\fr{f}}_{\bf a}^{-1}(\{p\}) \cap \lt. \h{W}_{{\bf a},k} \m\osr \h{S}_{\bf a} \rt., g^\infty \rt]
\eew
where, again, \(\diam \lt[ \h{\fr{f}}_{\bf a}^{-1}(\{p\}) \cap \lt. \h{W}_{{\bf a},k} \m\osr \h{S}_{\bf a} \rt., g^\infty \rt]\) denotes the diameter of \(\h{\fr{f}}_{\bf a}^{-1}(\{p\}) \cap \lt. \h{W}_{{\bf a},k} \m\osr \h{S}_{\bf a} \rt.\) \wrt\ the intrinsic semi-metric defined by \(g^\infty\), i.e.\ the semi-metric defined using paths contained entirely within \(\h{\fr{f}}_{\bf a}^{-1}(\{p\}) \cap \lt. \h{W}_{{\bf a},k} \m\osr \h{S}_{\bf a} \rt.\).

Now fix \({\bf a} \in \fr{A}\) and recall that:
\ew
\h{\fr{f}}_{\bf a}^{-1}(\{p\}) \cap \lt. \h{W}_{{\bf a},k} \m\osr \h{S}_{\bf a} \rt. = \lt[\lt(\lqt{\B^4\lt(\tfrac{1}{2}\ep,k\rt)}{\{\pm1\}}\rt) \m\osr \{0\}\rt] \x T_p,
\eew
where \(T_p\) is the fibre over \(p \in S^1\) of the projection \(T \to S^1\) onto the first coordinate, and:
\ew
g^\infty = \lt(1 - \frac{\lt(y^1\rt)^2}{4}\rt)^\frac{-1}{3}\lt\{\lt(\dd y^1\rt)^2 + \lt(\dd y^2\rt)^2 + \lt(\dd y^3\rt)^2 + y^1\dd y^1 \s \dd y^3\rt\},
\eew
on this region.  Write \(g^\infty = g_{s\E} + \vpi\) where:
\ew
g_{s\E} = \lt(\dd y^1\rt)^2 + \lt(\dd y^2\rt)^2 + \lt(\dd y^3\rt)^2 \et \vpi = g^\infty - g_{s\E}.
\eew
Note that \(g^\infty\) and \(g_{s\E}\) are not Riemannian metrics on \(\lt. \h{W}_{{\bf a},k} \m\osr \h{S}_{\bf a} \rt.\) and so, {\it a priori}, it is not clear that \lref{LC-vs-norm} applies. However, if one restricts attention to the distribution \(\<\frac{\del}{\del y^1}, \frac{\del}{\del y^2}, \frac{\del}{\del y^3}\?\), then both \(g^\infty\) and \(g_{s\E}\) are non-degenerate (i.e.\ inner products) and thus \lref{LC-vs-norm} applies.  One may compute that, on this distribution, over the region \(\lt. \h{W}_{{\bf a},k} \m\osr \h{S}_{\bf a} \rt.\):
\ew
\|\vpi\|_{g_{s\E}} \le Dk^{-3}
\eew
for some \(D >0\) independent of \(k\).  Thus by \lref{LC-vs-norm}:
\ew
g^\infty \le \lt(1+Dk^{-3}\rt)g_{s\E}.
\eew
Hence:
\ew
\diam \lt[ \h{\fr{f}}_{\bf a}^{-1}(\{p\}) \cap \lt. \h{W}_{{\bf a},k} \m\osr \h{S}_{\bf a} \rt., g^\infty \rt] \le \sqrt{1 + D k^{-3}}\diam \lt[ \h{\fr{f}}_{\bf a}^{-1}(\{p\}) \cap \lt. \h{W}_{{\bf a},k} \m\osr \h{S}_{\bf a} \rt., g_{s\E} \rt].
\eew
Clearly, \(\diam \lt[ \h{\fr{f}}_{\bf a}^{-1}(\{p\}) \cap \lt. \h{W}_{{\bf a},k} \m\osr \h{S}_{\bf a} \rt., g_{s\E} \rt]\) is bounded by \(D'k^{-3}\) for some \(D'>0\) independent of \(k\) and \(p\). Thus one has:
\ew
\diam \lt[ \h{\fr{f}}_{\bf a}^{-1}(\{p\}) \cap \lt. \h{W}_{{\bf a},k} \m\osr \h{S}_{\bf a} \rt., g^\0 \rt] \le D'k^{-3}\sqrt{1+Dk^{-3}}.
\eew
Taking supremum over \(p\in S^1_{y^3}\), maximum over \({\bf a} \in \fr{A}\) and the limit as \(k\to\infty\) gives the required result.

(5)  To prove this result, it is useful to introduce a third semi-metric on the region \(\del_{{\bf a},k}\) as follows.  Equip \(S^1\) with the metric \(\up^\frac{2}{3}d_\E\), where \(\up\) is as defined in \pref{tld-om-const}.  Pulling this metric back along the restriction \(\ca{\fr{f}}_{\bf a} \cong \h{\fr{f}}_{\bf a}: \del_{{\bf a},k} \to S^1\) defines a (\(\mu\)-independent) semi-metric on \(\del_{{\bf a},k}\), which I shall denote \(d_k\).  Explicitly:
\ew
d_k(x,y) = \up^\frac{2}{3}d_\E\lt(\ca{\fr{f}}_{\bf a}(x),\ca{\fr{f}}_{\bf a}(y)\rt).
\eew
To prove \pref{hyps-2-4}(5), I shall prove the following two statements:
\begin{itemize}
\item[(\(5i\))]
\ew
\lim_{k \to \infty} ~ \limsup_{\mu \to \infty} ~ \max_{{\bf a} \in \fr{A}} ~ \sup_{\del_{{\bf a},k}} \lt|d^\mu - d_k\rt| = 0;
\eew

\item[(\(5ii\))]
\ew
\lim_{k \to \infty} ~ \max_{{\bf a} \in \fr{A}} ~ \sup_{\del_{{\bf a},k}} \lt|d_k - d^\infty\rt| = 0.
\eew
\end{itemize}
Clearly, these collectively imply \pref{hyps-2-4}(5).

To prove (\(5i\)), I employ the following strategy: it is necessary to bound both \(d_k - d^\mu\) and \(d^\mu - d_k\) from above.   For the first of these two quantities, it is shown that, on all of \(\ca{\M}\): `\(g^\mu \ge \up^\frac{4}{3}\ca{\fr{f}}^*g_\E\)' in the limit as \(\mu\to\infty\).   Thus `\(d^\mu \ge d^k\)' (again, in a limiting sense) and hence \(d_k - d^\mu\) can be bounded above.   For the second, for any two points \(x,y \in \del_k\), an explicit path \(\ga:x \to y\) is constructed whose length with respect to \(g^\mu\) is approximately \(d_k(x,y)\), with this approximation becoming exact in the limit as first \(\mu\to\infty\) and then \(k\to\infty\).  The strategy for (\(5ii\)) is similar.

(\(5i\)) Recall the following decomposition of \(\ca{\M}\):
\ew
\ca{\M} = \itr{\M} \bigcup \ca{\M}_{int} \bigcup \lt(\coprod_{{\bf a} \in \fr{A}} \ca{W}_{\bf a}\rt).
\eew
As in \sref{bulk-conv}, I shall consider each region in turn.\\

\noindent \ul{\(\itr{\M}\)}:   Here:
\ew
g^\mu = \lt(\lt(\th^1\rt)^{\ts2} + \lt(\th^2\rt)^{\ts2} + \lt(\th^3\rt)^{\ts2}\rt) + \mu^{-6}\lt(\lt(\th^4\rt)^{\ts2} + \lt(\th^5\rt)^{\ts2} + \lt(\th^6\rt)^{\ts2} + \lt(\th^7\rt)^{\ts2}\rt)
\eew
and thus evidently:
\e\label{itr-bd}
g^\mu \ge \lt(\th^3\rt)^{\ts2} = \lt(\dd x^3\rt)^{\ts2} = \ca{\fr{f}}^*g_\E \ge \up^\frac{4}{3}\ca{\fr{f}}^*g_\E
\ee
on \(\itr{\M}\) (recall that \(\up < 1\)).\\

\noindent \ul{\(\ca{\M}_{int}\)}:   On this region, recall that \(\ca{\upphi}^\mu = \mu^{-6}\h{\ph}^\mu\), where \(\h{\ph}^\mu\) was defined in \pref{almostprod}.   Moreover, recall from the proof of the same proposition that:
\ew
\lt|\h{\ph}^\mu - \h{\xi}^\mu\rt|_{\h{\xi}^\mu} \le (4C+1)\ep
\eew
for some constant \(C>0\) independent of \(\mu\), where \(\h{\xi}^\mu\) is given by:
\ew
\h{\xi}^\mu = \mu^6\dd y^{123} + \dd y^{145} + \dd y^{167} - \dd y^{246} + \dd y^{257} + \dd y^{347} + \dd y^{356}.
\eew
Hence, by applying a simple rescaling:
\ew
\lt|\ca{\upphi}^\mu - \h{\upxi}^\mu\rt|_{\h{\upxi}^\mu} \le (4C+1)\ep,
\eew
where \(\h{\upxi}^\mu = \mu^{-6}\h{\xi}^\mu\) as in \sref{Mint}. It follows from \eref{De0} that, for all \(\ep > 0\) sufficiently small (independent of \(\mu\)), the metric \(g^\mu\) induced by \(\ca{\upphi}^\mu\) satisfies:
\ew
\lt|g^\mu - g_{\h{\upxi}^\mu}\rt|_{g_{\h{\upxi}^\mu}} \le C'\ep
\eew
for some constant \(C' > 0\), independent of \(\mu\).   By applying \lref{LC-vs-norm}, it follows that on \(\ca{\M}_{int}\):
\ew
g^\mu \ge \lt(1 - C'\ep\rt)g_{\h{\upxi}^\mu}.
\eew
However an explicit calculation shows that:
\ew
g_{\h{\upxi}^\mu} = \lt(\lt(\dd y^1\rt)^{\ts2} + \lt(\dd y^2\rt)^{\ts2} + \lt(\dd y^3\rt)^{\ts2}\rt) + \mu^{-6}\lt(\lt(\dd y^4\rt)^{\ts2} + \lt(\dd y^5\rt)^{\ts2} + \lt(\dd y^6\rt)^{\ts2} + \lt(\dd y^7\rt)^{\ts2}\rt)
\eew
and thus:
\ew
g_{\h{\upxi}^\mu} \ge \lt(\dd y^3\rt)^{\ts2} = \lt(\dd x^3\rt)^{\ts2} = \ca{\fr{f}}^*g_\E.
\eew
Thus on \(\ca{\M}_{int}\):
\ew
g^\mu \ge \lt(1 - C'\ep\rt)\ca{\fr{f}}^*g_\E.
\eew
Now recall that \(\up<1\) is independent of \(\mu\) and \(t\) and hence also independent of \(\ep\).   Thus for \(\ep\) sufficiently small independent of \(\mu\), one has that:
\ew
1 - C'\ep \ge \up^\frac{4}{3}.
\eew
Thus, reducing \(\ep\) equally for all \(\mu\) if necessary, one has that on \(\ca{\M}_{int}\):
\e\label{int-bd}
g^\mu \ge \up^\frac{4}{3}\ca{\fr{f}}^*g_\E.
\ee\\

\noindent \ul{\(\coprod_{{\bf a}\in\fr{A}} \ca{W}_{\bf a}\)}:   Fix some \(\ca{W}_{\bf a}\), \({\bf a} \in \fr{A}\), and, as for the proof of (3), begin by considering the homothetic region \(T_\mu \x \tld{X}\lt(\tfrac{1}{2}\ep,\mu^{-1}\rt)\) equipped with the \g\ 3-form \(\ze^\mu = \ze + \mu^{-3}\si\) (cf.\ \lref{tech-lem-for-res}).

Initially, consider the region outside \(T_\mu \x \tld{X}_{\frac{1}{2}\ep}\), i.e.\ the region \(\lt\{r \ge \frac{1}{2}\ep\rt\}\).  On this region \(\ze = \hat{\xi}\) is just the standard (Euclidean) \g\ 3-form in the coordinates \((y^i)\) and so:
\ew
g_\ze \ge \lt(\dd y^3\rt)^{\ts2} = f^*g_\E,
\eew
where \(f\) denotes the composite:
\ew
T_\mu \x \tld{X}\lt(\tfrac{1}{2}\ep,\mu^{-1}\rt) \oto{proj_1} T_\mu \oto{\lt(y^3,y^4,y^7\rt) \mt y^3} \mu^3S^1,
\eew
as above.  Moreover, recall from \lref{tech-lem-for-res} that outside \(T_\mu \x \tld{X}_{\frac{1}{2}\ep}\):
\ew
\lt\|\ze^\mu - \ze\rt\|_\ze \le \frac{1}{2}\ep.
\eew
and thus by \eref{De0}, for all \(\ep>0\) sufficiently small (independent of \(\mu\)):
\ew
\lt\|g_{\ze^\mu} - g_\ze\rt\|_\ze \le \frac{\De_0\ep}{2}.
\eew
Hence by applying \lref{LC-vs-norm}:
\ew
g_{\ze^\mu} \ge \lt(1 - \frac{\De_0\ep}{2} \rt)g_\ze \ge \lt(1 - \frac{\De_0\ep}{2} \rt) f^*g_\E.
\eew
As before, reducing \(\ep\) equally for all \(\mu\) if necessary, one may assume that \(1 - \frac{\De_0\ep}{2} \ge \up^\frac{4}{3}\) and thus that:
\e\label{os-est}
g_{\ze^\mu} \ge \up^\frac{4}{3} f^*g_\E
\ee
outside \(T_\mu \x \tld{X}_{\frac{1}{2}\ep}\).

Next, consider the region \(T_\mu \x \tld{X}_{\frac{1}{2}\ep}\).   By \lref{scaling-lem}, on this region one can write:
\ew
g_\ze = \nu^\frac{4}{3}\lt(\dd y^3\rt)^{\ts2} + \nu^{-\frac{2}{3}}\lt[\lt(\dd y^4\rt)^{\ts2} + \lt(\dd y^7\rt)^{\ts2}\rt] + \nu^{-\frac{2}{3}}g_{\ca{\om}_t}
\eew
where \(\nu\) is defined by the equation:
\ew
\nu^2\ca{\om}_t^2 = \fr{Re}\tld{\Om}^2 = \fr{Im}\tld{\Om}^2.
\eew
Then clearly:
\ew
g_\ze \ge \nu^\frac{4}{3}\lt(\dd y^3\rt)^{\ts2} \ge \up^\frac{4}{3}\lt(\dd y^3\rt)^{\ts2},
\eew
where the final equality follows from the fact that \(\up\) was defined precisely as the minimum value of \(\nu\).   Now recall that on the region \(T_\mu \x \tld{X}_{\frac{1}{2}\ep}\) one has:
\ew
\lt\|\frac{1}{\mu^3}\si\rt\|_\ze \le \frac{C}{\mu^3}
\eew
for some \(C>0\) independent of \(\mu\).   Thus, by \eref{De0}, for all \(\mu\) sufficiently large one can write:
\e\label{effect-of-si}
\lt\|g_{\ze^\mu} - g_\ze\rt\|_{g_\ze} \le \frac{C\De_0}{\mu^3}.
\ee
Applying \lref{LC-vs-norm} yields:
\e\label{is-est}
g_{\ze^\mu} \ge \lt(1 - \frac{C\De_0}{\mu^3}\rt)g_\ze \ge\lt(1 - \frac{C\De_0}{\mu^3}\rt)\up^\frac{4}{3}f^*g_\E
\ee
on the region \(T_\mu \x \tld{X}_{\frac{1}{2}\ep}\).   Combining \erefs{os-est} and \eqref{is-est} then yields the estimate:
\e\label{ze^mu-bd}
g_{\ze^\mu} \ge \lt(1 - \frac{C\De_0}{\mu^3}\rt)\up^\frac{4}{3}f^*g_\E
\ee
on all of \(T_\mu \x \tld{X}\lt(\tfrac{1}{2}\ep,\mu^{-1}\rt)\).

Now recall the homothety \(\ca{W}_\textbf{a} \cong T \x \tld{X}_{\frac{1}{2}\ep} \oto{\fr{H}^\mu} T_\mu \x \tld{X}\lt(\tfrac{1}{2}\ep,\mu^{-1}\rt)\) and recall from \eref{tldze} that the 3-form \(\ca{\upphi}^\mu\) on \(\ca{W}_{\bf a}\) is defined by:
\ew
\ca{\upphi}^\mu = \mu^{-9}\lt(\fr{H}^\mu\rt)^*\ze^\mu.
\eew
Note also that the diagram:
\ew
\bcd
\ca{W}_{\bf a} \cong T \x \tld{X}_{\frac{1}{2}\ep} \ar[r, ^^22 \fr{H}^\mu ^^22] \ar[d, ^^22 \ca{\fr{f}} ^^22] & T_\mu \x \tld{X}\lt(\tfrac{1}{2}\ep,\mu^{-1}\rt) \ar[d, ^^22 f ^^22]\\
S^1 \ar[r, ^^22 \x \mu^3 ^^22] & \mu^3 S^1
\ecd
\eew
commutes.  Applying the homothety \(\fr{H}^\mu\) to \eref{ze^mu-bd} yields:
\ew
g^\mu = g_{\ca{\upphi}^\mu} &\ge \lt(1 - \frac{C\De_0}{\mu^3}\rt)\up^\frac{4}{3}\ca{\fr{f}}^*g_\E
\eew
on the region \(\ca{W}_{\bf a}\).

Combining this with \eref{itr-bd} and \eqref{int-bd} yields:
\e\label{full-bd}
g^\mu \ge \lt(1 - \frac{C\De_0}{\mu^3}\rt)\up^\frac{4}{3}\ca{\fr{f}}^*g_\E
\ee
on all of \(\ca{\M}\), hence:
\ew
d^\mu \ge \sqrt{1 - \frac{C\De_0}{\mu^3}}d_k
\eew
and whence:
\e\label{1sthalf}
d_k - d^\mu &\le \frac{C\De_0}{\mu^3},
\ee
since manifestly \(d_k(x,y) \le \up^\frac{4}{3} < 1\) for all \(x,y\in \ca{\M}\).   Thus, the quantity \(d_k - d^\mu\) has been bounded above uniformly on all of \(\ca{\M}\).

Now fix \({\bf a} \in \fr{A}\) and turn attention to bounding the quantity \(d^\mu - d_k\) from above on the subset \(\del_{{\bf a},k}\).   Recall the distance \(\fr{r}\) defined in \pref{tld-om-const}.   For each \(p \in S^1\), define a point \(\ca{p} \in \ca{W}_{{\bf a},k}\) using the local coordinates \((y^1,...,y^7)\) on \(\ca{W}_{{\bf a},k}\) as:
\ew
\ca{p} = \lt(\frac{\fr{r}}{\mu^3},0,p,0,0,0,0\rt) \in \ca{\fr{f}}_{\bf a}^{-1}(\{p\}) \cap \ca{W}_{{\bf a},\mu} \cc \ca{\fr{f}}_{\bf a}^{-1}(\{p\}) \cap \ca{W}_{{\bf a},k},
\eew
where the final inclusion holds when \(\mu \ge k\).  The first task is to understand the distance between points of the form \(\ca{p}\) \wrt\ the metric \(d^\mu\).

Given \(p,q \in S^1\), pick the shorter segment \(\ga\) connecting \(p \to q\) in \(S^1\) and use it to define a path \(\ca{\ga}\) in \(\ca{W}_{{\bf a},\mu}\) connecting \(\ca{p} \to \ca{q}\) via:
\ew
\ca{\ga} = \lt(\frac{\fr{r}}{\mu^3},0,\ga,0,0,0,0\rt).
\eew
Then clearly:
\ew
d^\mu\lt(\ca{p},\ca{q}\rt) \le \ell_{g^\mu}(\ca{\ga}).
\eew
To compute \(\ell_{g^\mu}(\ca{\ga})\), consider the homothety \(\ca{W}_{\textbf{a},\mu} \cong T \x \tld{X}_{\frac{1}{2}\ep,\mu} \oto{\fr{H}^\mu} T_\mu \x \tld{X}_{\frac{1}{2}\ep}\) and recall that \(\ca{\upphi}^\mu = \mu^{-9}\lt(\fr{H}^\mu\rt)^*\ze^\mu\).   Under \(\fr{H}^\mu\), \(\ca{\ga}\) becomes the path:
\ew
\ca{\ga}' = \lt(\fr{r},0,\mu^3\ga,0,0,0,0\rt).
\eew
Begin by considering the \g\ 3-form \(\ze\).   By \lref{scaling-lem}, this induces the metric:
\ew
g_\ze = \nu^\frac{4}{3}\lt(\dd y^3\rt)^{\ts2} + \nu^{-\frac{2}{3}}\lt[\lt(\dd y^4\rt)^{\ts2} + \lt(\dd y^7\rt)^{\ts2}\rt] + \nu^{-\frac{2}{3}}g_{\tld{\om}_t},
\eew
where \(\nu|_{r = \fr{r}} = \up\).   The length of \(\ca{\ga}'\) \wrt\ the metric induced by \(\ze\) is then clearly \(\up^\frac{2}{3}\mu^3d_\E(p,q)\).   Applying \lref{LC-vs-norm} to \eref{effect-of-si} yields:
\ew
g_{\ze^\mu} \le \lt(1 + \frac{C\De_0}{\mu^3}\rt)g_\ze
\eew
and so:
\ew
\ell_{g_{\ze^\mu}}\lt(\ca{\ga}'\rt) \le \sqrt{1 + \frac{C\De_0}{\mu^3}}\up^\frac{2}{3}\mu^3d_\E(p,q).
\eew

Pulling this equation back along the homothety \(\fr{H}^\mu\) (and rescaling by \(\mu^{-3}\)) gives:
\e\label{d-4-sp}
d^\mu\lt(\ca{p},\ca{q}\rt) &\le \ell_{g^\mu}(\ca{\ga})\\
&\le \sqrt{1 + \frac{C\De_0}{\mu^3}}\up^\frac{2}{3}d_\E(p,q)\\
&\le d_k\lt(\ca{p},\ca{q}\rt) + \frac{C\De_0}{\mu^3},
\ee
where, again, \(\up^\frac{2}{3}d_\E(p,q) \le 1\) for all \(p,q \in S^1\) has been used.

Now pick two arbitrary points \(x,y \in \del_k\).   Since, writing \(p = \ca{\fr{f}}_\textbf{a}(x)\) and \(q = \ca{\fr{f}}_\textbf{a}(y)\), one has \(d^\mu\lt(x, \ca{p}\rt) \le \diam_{d^\mu}\lt[\ca{\fr{f}}_{\bf a}^{-1}(\{p\}) \cap \ca{W}_{{\bf a},k}\rt]\) and \(d^\mu\lt(\ca{q},y\rt) \le \diam_{d^\mu}\lt[\ca{\fr{f}}_{\bf a}^{-1}(\{q\}) \cap \ca{W}_{{\bf a},k}\rt]\), it follows that:
\e\label{2ndhalf}
d^\mu(x,y) &\le d^\mu\lt(x, \ca{p}\rt) + d^\mu\lt(\ca{p},\ca{q}\rt) + d^\mu\lt(\ca{q},y\rt)\\
&\le d^\mu\lt(\ca{p},\ca{q}\rt) + 2\sup_{s \in S^1} \diam_{d^\mu}\lt[\ca{\fr{f}}_{\bf a}^{-1}(\{s\}) \cap \ca{W}_{{\bf a},k}\rt]\\
& \le d_k(x,y) + \frac{C\De_0}{\mu^3} + 2\sup_{s \in S^1} \diam_{d^\mu}\lt[\ca{\fr{f}}_{\bf a}^{-1}(\{s\}) \cap \ca{W}_{{\bf a},k}\rt],
\ee
where \eref{d-4-sp} was used in passing to the final line.  Combining \erefs{1sthalf} and \eqref{2ndhalf} gives:
\ew
\lt|d^\mu(x,y) - d_k(x,y)\rt| \le \frac{C\De_0}{\mu^3} + 2\sup_{s \in S^1} \diam_{d^\mu}\lt[\ca{\fr{f}}_{\bf a}^{-1}(\{s\}) \cap \ca{W}_{{\bf a},k}\rt]
\eew
uniformly in \(x\) and \(y\).   Taking \(\max\) over \({\bf a} \in \fr{A}\), \(\limsup\) over \(\mu \to \infty\) and subsequently the limit over \(k \to \infty\), together with \pref{hyps-2-4}(3) gives the required result.

(\(5ii\)) The argument in this case is similar but easier.  Firstly, by taking the (pointwise) limit of \eref{full-bd} as \(\mu \to \infty\), one obtains that on \( \lt. \h{\M} \m\osr \coprod_{{\bf a} \in \fr{A}}  \h{\fr{W}}_{\bf a} \rt.\):
\ew
g^\infty \ge \up^\frac{4}{3} \h{\fr{f}}^*g_\E
\eew
and hence, by continuity, this inequality holds on all of \(\lt. \h{\M} \m\osr \h{S} \rt.\).  Moreover, for each \({\bf a} \in \fr{A}\):
\ew
g^\infty_{\bf a} = \up^\frac{4}{3} \h{\fr{f}}^*g_\E.
\eew
It follows that:
\e\label{1sthalf-2}
d^\infty \ge d_k
\ee
on all of \(\h{\M}\).

For the converse bound, given \(x,y \in \del_k\), define \(p = \h{\fr{f}}_\textbf{a}(x)\), \(q = \h{\fr{f}}_\textbf{a}(y)\) and consider the points:
\ew
\h{p} = \lt(0,0,p,0,0,0,0\rt) \et \h{q} = \lt(0,0,q,0,0,0,0\rt) \in \h{W}_{{\bf a},k}.
\eew
Since \(d^\infty\lt(x, \h{p}\rt) \le \diam_{d^\infty}\lt[ \h{\fr{f}}_{\bf a}^{-1}(\{p\}) \cap \h{W}_{{\bf a},k} \rt]\) and \(d^\infty\lt(\h{q},y\rt) \le \diam_{d^\infty}\lt[ \h{\fr{f}}_{\bf a}^{-1}(\{q\}) \cap \h{W}_{{\bf a},k} \rt]\), it follows that:
\ew
d^\infty(x,y) &\le d^\infty\lt(x, \h{p}\rt) + d^\infty\lt(\h{p},\h{q}\rt) + d^\infty\lt(\h{q},y\rt)\\
&\le d^\infty\lt(\h{p},\h{q}\rt) + 2\sup_{s \in S^1}  \diam_{d^\infty}\lt[ \h{\fr{f}}_{\bf a}^{-1}(\{s\}) \cap \h{W}_{{\bf a},k} \rt] .
\eew
However \(d^\infty\lt(\h{p},\h{q}\rt)\) can easily be bounded as follows: choose the shorter segment \(\ga:p \to q\) in \(S^1\).  This defines a path \(\h{\ga}\) from \(\h{p}\) to \(\h{q}\) in \(\h{W}_{{\bf a},k}\) via \((0,0,\ga,0,0,0,0)\) which has length \(\up^\frac{2}{3}\dd_\E(p,q) = d_k(x,y)\) \wrt\ the \(g^\infty_{\bf a}\).  Thus:
\e\label{2ndhalf-2}
d^\infty(x,y) \le d_k(x,y) + 2\sup_{s \in S^1}  \diam_{d^\infty}\lt[ \h{\fr{f}}_{\bf a}^{-1}(\{s\}) \cap \h{W}_{{\bf a},k} \rt] .
\ee

Finally, combining \erefs{1sthalf-2} and \eqref{2ndhalf-2} gives:
\ew
\lt|d^\infty(x,y) - d_k(x,y)\rt| \le 2\sup_{s \in S^1}  \diam_{d^\infty}\lt[ \h{\fr{f}}_{\bf a}^{-1}(\{s\}) \cap \h{W}_{{\bf a},k} \rt]
\eew
uniformly in \(x, y \in \del_{{\bf a},k}\).   Taking \(\max\) over \({\bf a} \in \fr{A}\), letting \(k \to \infty\) and recalling \pref{hyps-2-4}(4) gives the required result.  This completes the proof of \tref{FFKM-CT2}.

\end{proof}

~\vs{5mm}

\noindent Laurence H.\ Mayther\\
University of Cambridge\\
United Kingdom\\
{\it lhm32@cam.ac.uk}

\end{document}